\NewDocumentCommand{\An}{}{A_n[\infty]}
\NewDocumentCommand{\config}{}{n\mathbbm{1}_{\mathcal{H}}}
\NewDocumentCommand{\Rk}{}{\mathcal{R}_{k\sqrt{\log n}}}
\NewDocumentCommand{\Mk}{}{M_{k\sqrt{\log n}}}
\NewDocumentCommand{\Wk}{}{W_{k\sqrt{\log n}}}
\NewDocumentCommand{\Mnl}{}{M_{n/2+l}}
\NewDocumentCommand{\Mnstarl}{}{M^*_{n/2+l}}
\NewDocumentCommand{\Tauk}{}{\mathcal{T}_{k\sqrt{\log n}}}
\NewDocumentCommand{\init}{}{n\mathbbm{1}_{\mathcal{H}}}
\NewDocumentCommand{\MMk}{}{\widetilde{M}_{k\sqrt{\log n}}}
\NewDocumentCommand{\nsubset}{}{\not\subset}
\NewDocumentCommand{\dx}{}{\mathrm{d}}
\DeclareMathOperator{\Over}{\normalfont{\textbf{Over}}}
\NewDocumentCommand{\Dext}{}{\textbf{D}_{\mathrm{ext}}}
\NewDocumentCommand{\Cmid}{}{\textbf{m}_i}
\newcommand{\B}[1]{\Over(M,\varepsilon,#1)}
\newcommand{\Bbis}[1]{\Over(M,\varepsilon,#1)^c}
\newcommand{\Bn}[1]{\Over(M_{#1},\varepsilon_{#1})}
\newcommand{\Bbisn}[1]{\Over(M_{#1},\varepsilon_{#1})^c}
\newcommand{\comment}[1]{\textcolor{black}{#1}}
\newcommand{\commentK}[1]{\textcolor{black}{#1}}
\numberwithin{equation}{section}
\theoremstyle{plain}
\newtheorem{theorem}{Theorem}[section]
\newtheorem{proposition}[theorem]{Proposition}
\newtheorem{lemma}[theorem]{Lemma}
\theoremstyle{definition}
\title{IDLA with sources in a hyperplane of $\mathbb{Z}^d$}
\author[1]{\textsc{Nicolas Chenavier}}
\author[2]{\textsc{David Coupier}}
\author[1]{\textsc{Keenan Penner}}
\author[3]{\textsc{Arnaud Rousselle}} 
\affil[1]{Université du Littoral Côte d'Opale, UR 2597, LMPA, Laboratoire de Mathématiques Pures et Appliquées Joseph Liouville,
62100 Calais, France.}
\affil[2]{Institut Mines Télécom Nord Europe, Cité Scientifique, 59655 Villeneuve d'Ascq, France.}
\affil[3]{Universit\'e Bourgogne Europe, CNRS, IMB UMR 5584,
F-21000 Dijon, France.}
\date{}
\begin{document}
\maketitle

\begin{abstract}
	We consider a random growth model based on the IDLA protocol with sources in a hyperplane of $\mathbb{Z}^d$. We provide a stabilization result and a shape theorem generalizing \cite{chenavier2023bi} in any dimension by introducing new techniques leading to a rough global upper bound.
\end{abstract}

\noindent\textbf{Keywords:} Internal diffusion limited aggregation, cluster growth, random walks, shape theorems, sublogarithmic fluctuations.\\ 

\noindent\textbf{Mathematics Subject Classification:} 60K35, 82C24, 82B41.

\section{Introduction}
\label{section: intro}

The (standard) Internal Diffusion Limited Aggregation (IDLA) is a random growth model $(A_n)_{n\geq 0}$ in $\mathbb{Z}^d$ recursively defined as follows. We start with $A_0=\emptyset$. At step $n$, a simple symmetric random walk (independent of everything else) starts from the origin $0$, called the \textit{source}, until it exits the current aggregate $A_{n-1}$, say at some vertex $z$, which is added to $A_{n-1}$ to get $A_n = A_{n-1}\cup\{z\}$. A first shape theorem was established by Lawler, Bramson and Griffeath in \cite{lawler1992internal}. It asserts that the aggregate $A_n$ (when it is suitably normalized) converges a.s.\,to an Euclidean ball as the number $n$ of random walks goes to infinity, with fluctuations (w.r.t.\,the limit shape) which are at most linear. Since then, several papers (by Lawler \cite{lawler95}, Asselah and Gaudilli\`ere \cite{asselah2013logarithmic,asselah2013sublogarithmic,AG14} and Jerison, Levine and Sheffield \cite{jerison2012logarithmic,JLS13,JLS14}) have improved the bounds for fluctuations which are known to be logarithmic in dimension $2$ and sublogarithmic in higher dimensions. Since then, many variants of this model have been considered and corresponding shape theorems have been explored. Let us cite IDLA models on discrete groups with polynomial or exponential growth in \cite{B04,BlB07}, on non-amenable graphs in \cite{H08}, on comb lattices in \cite{AR16,HS12}, on cylinder graphs in \cite{JLS14b,LS,S19} or on supercritical percolation clusters in \cite{DLYY, Shellef}. Let us mention that IDLA models with drifted random walks \cite{L14} or with uniform starting points \cite{BDCKL} have been also studied. The case of multiple sources has been investigated too, see e.g. \comment{\cite{chenavier2023bi,LP10, freiberg2023internal}}. 

In this paper, we aim to extend the shape theorem in dimension $d=2$ stated in \cite{chenavier2023bi} to higher dimensions. As explained below, this generalization is non-trivial and requires new ideas.\\

The infinite set of sources that we consider is the hyperplane $\mathcal{H}:=\{0\} \times \mathbb{Z}^{d-1}$ of $\mathbb{Z}^{d}$, with $d\geq 3$. A random walk starting from a source of $\mathcal{H}$ and stopped when it exits the current aggregate is called a \textit{particle}. Let $n,M$ be non-negative integers. In the sequel, exactly $n$ particles are sent from each source. Let us now build the sequence of aggregates $(A_n[M])_{M\geq 0}$ inductively as follows. When $M=0$, $A_n[0]$ is the classical IDLA model, i.e.\,with $n$ particles emitted from the origin. Let us call \textit{level} $M$ the set of sources in $\mathcal{H}$ at distance $M$ from the origin (for $\|(z_1,\ldots,z_d)\| := \max_i |z_i|$). Given a realization of $A_n[M-1]$, we throw $n$ particles from each source of level $M$ according to the lexicographical order. So $A_n[M]$ is defined as the aggregate produced by $A_n[M-1]$ and the new sites added by particles launched at level $M$.

Unlike its shape, the total number of sites in $A_n[M]$ is deterministic, and equals $\#A_n[M] = n(2M+1)^{d-1}$. Besides, by construction, the sequence of aggregates $(A_n[M])_{M\geq 0}$ is a.s.\,increasing in the sense of inclusion, allowing us to define the limiting aggregate $A_n[\infty]$ as:
\[
A_n[\infty] := \bigcup_{M\geq 0} A_n[M] \quad \text{a.s.}
\]

\begin{figure}
	\centering
	\includegraphics{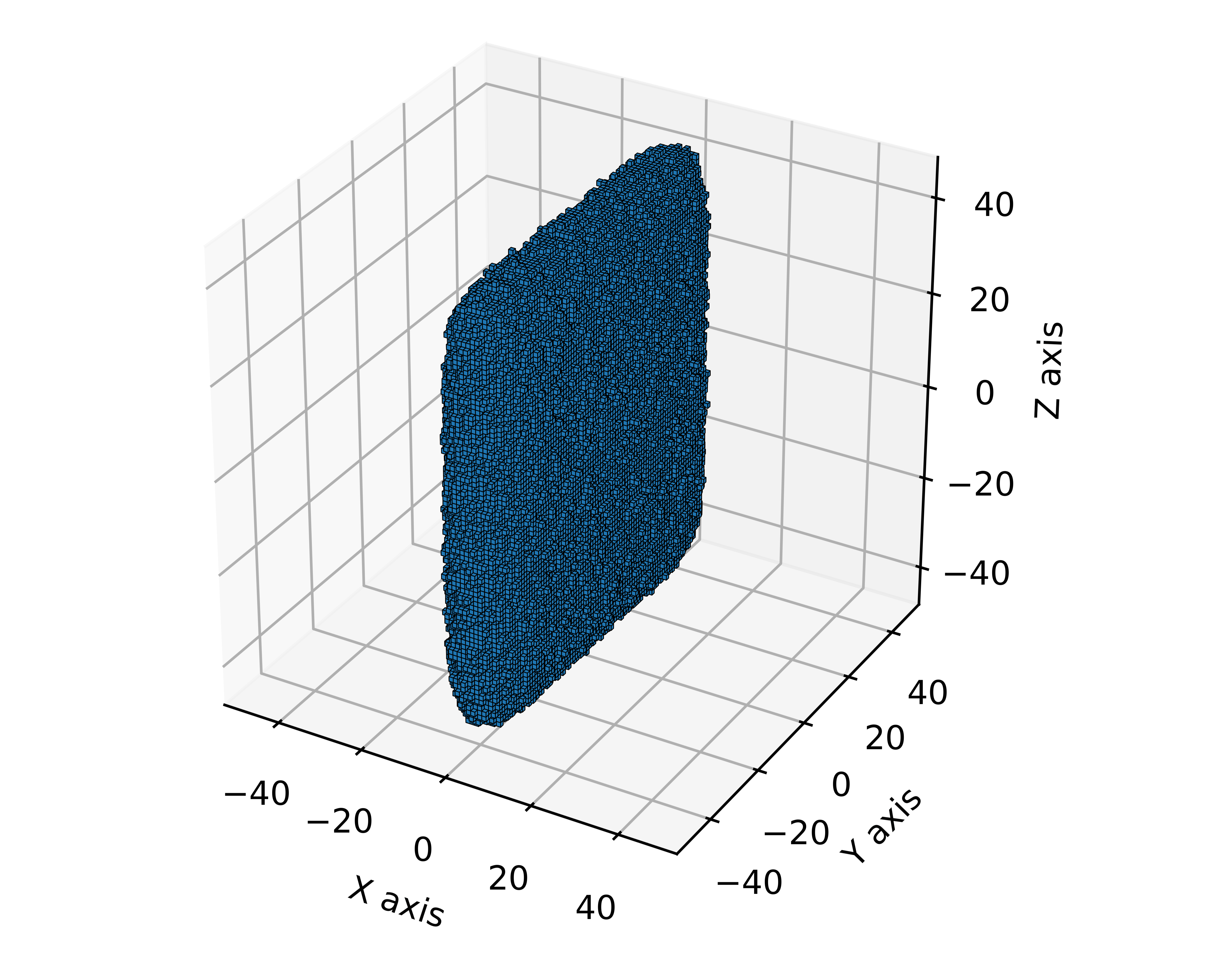}
	\caption{A realization of $A_{20}[40]$. Each particle is represented by a cube.}
	\label{fig: fullplot}
	\includegraphics{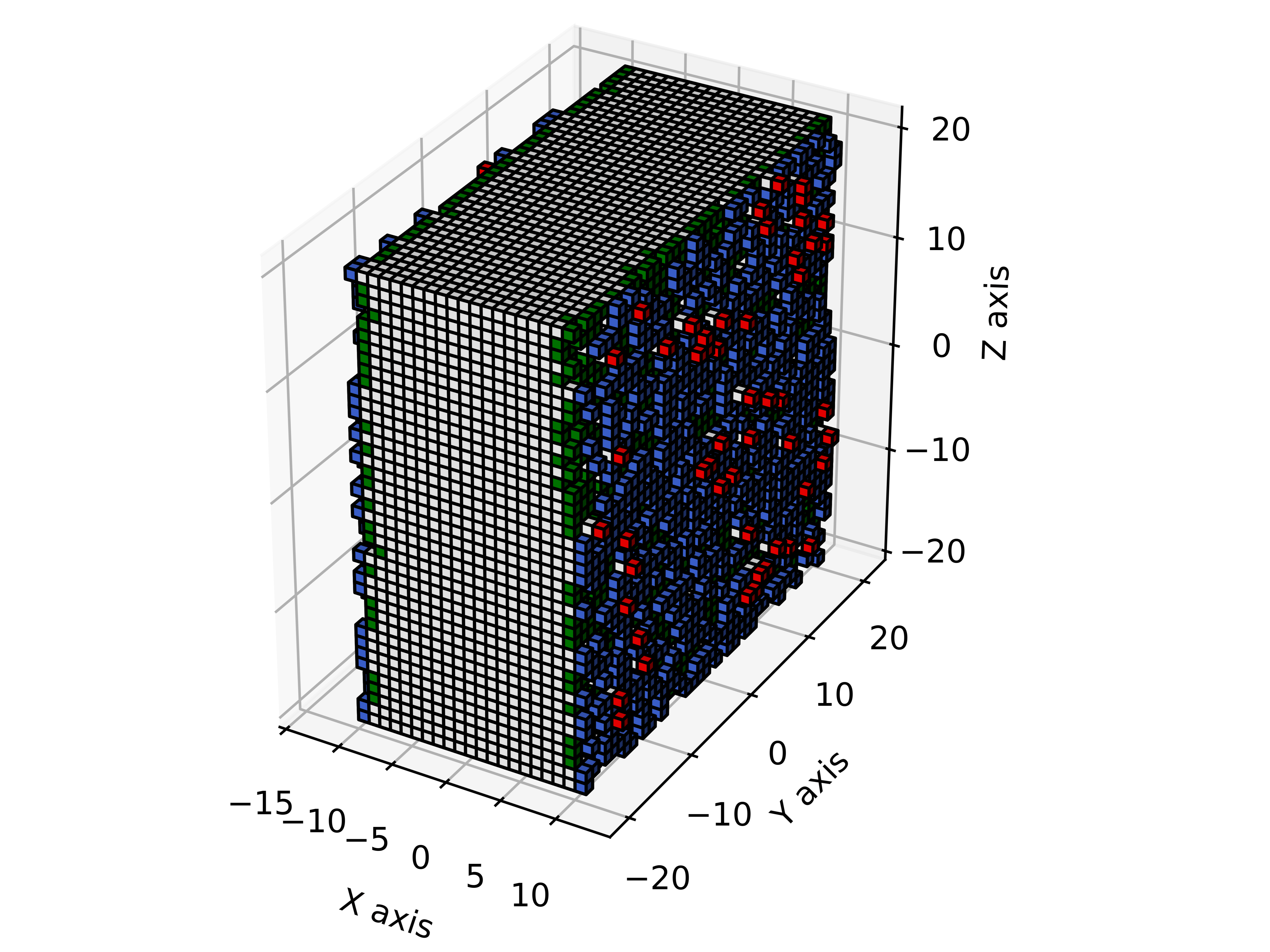}
	\caption{A realization of $A_{20}[40]\cap \mathbb{Z}_{20}$. The points with $x$-coordinate on the border such that $|x| = 10$ (resp. $|x|<10$ and $|x|>10$) are colored in blue (resp. green and red). All other points are colored in white.}
	\label{fig: shapethm}
\end{figure}
One of our main results is a shape theorem for $A_n[\infty]$. Restricted to the (large) strip $\mathbb{Z}_{n^\alpha} := \mathbb{Z} \times \llbracket -\lfloor n^\alpha\rfloor , \lfloor n^\alpha\rfloor \rrbracket^{d-1}$, the aggregate $A_n[\infty]$ looks like a slab with thickness $n$ and sublogarithmic fluctuations as the number of particles $n$ tends to infinity. Let us specify that the slab $\mathcal{R}_x$ is defined as \comment{$\mathcal{R}_x := \llbracket -\lfloor x \rfloor , \lfloor x \rfloor \rrbracket \times \mathbb{Z}^{d-1}$} for any positive real number $x$.
\begin{theorem}{(Shape theorem)}
	\label{thm: shape}
	For any integers $d\geq 3$ and $\alpha \geq 1$, there exists a constant $C = C(d,\alpha)>0$ such that, almost surely, there exists an integer $N\geq 1$ such that for any integer $n\geq N$,
	\begin{equation}
		\label{ShapeTh}
		\mathcal{R}_{n/2-C\sqrt{\log n}} \cap \mathbb{Z}_{n^\alpha} \subset A_n[\infty] \cap \mathbb{Z}_{n^\alpha} \subset \mathcal{R}_{n/2+C\sqrt{\log n}} \cap \mathbb{Z}_{n^\alpha} ~.
	\end{equation}
\end{theorem}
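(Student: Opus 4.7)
My plan is to prove the two inclusions in \eqref{ShapeTh} separately after reducing to a finite-source problem. By the stabilization theorem (Theorem \ref{thm: strong stab}) there exists $\gamma=\gamma(\alpha,d)>\alpha$ such that, outside an event of small probability, $A_n[\infty]\cap\Z_{n^\alpha}$ coincides with $\Ang\cap\Z_{n^\alpha}$. I would then establish each inclusion pointwise with probability at least $1-n^{-\kappa}$ for a suitably large $\kappa$, take a union bound over the $\mathcal{O}(n^{1+(d-1)\alpha})$ candidate sites of $\Z_{n^\alpha}\cap R_n$, and conclude almost surely via Borel--Cantelli in $n$.

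\textbf{Inner bound.} Fix a target $z_0\in R_{n/2-C\sqrt{\log n}}\cap\Z_{n^\alpha}$. I would adapt the Lawler--Bramson--Griffeath martingale method to the multi-source setting. A classical abelian accounting reduces the event $\{z_0\in\Ang\}$ to the positivity of a signed sum $N(z_0)-M(z_0)$, where $N(z_0)$ counts particles from $\mathcal{H}$ whose free walk visits $z_0$ before exiting a ball $B(z_0,R_n)$ of radius $R_n\asymp\sqrt{n\log n}$, and $M(z_0)$ counts those that leak out of $B(z_0,R_n)$ before being absorbed near $z_0$. Green-function estimates show that the $\asymp R_n^{d-1}$ sources of $\mathcal{H}\cap B(z_0,R_n)$ contribute to $N(z_0)-M(z_0)$ an expected positive drift of order $nR_n\gg 1$, while the variance remains controllable thanks to the independence of the walks; a concentration inequality in the spirit of \cite{AG14, JLS13} then yields $\mathbb{P}(z_0\notin\Ang)\leq\exp(-c(\log n)^{3/2})$ for $C$ large enough, which easily absorbs the union bound.

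\textbf{Outer bound.} This direction is the delicate one and is where the new techniques announced in the abstract must come in. I would proceed in two stages. First, establish a \emph{rough} global upper bound $\Ang\subset R_{n/2+\psi(n)}$ with $\psi(n)=o(n)$---a small power of $n$, or a power of $\log n$, would suffice at this stage. Second, refine to $\sqrt{\log n}$ fluctuations: any $z_0$ at distance greater than $n/2+C\sqrt{\log n}$ from $\mathcal{H}$ would require many independent random walks from the now-localized set of relevant sources to overshoot a given level by $C\sqrt{\log n}$, an event controllable via Gaussian escape and harmonic-measure bounds.

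\textbf{Main obstacle.} The genuine difficulty, and the reason the two-dimensional argument of \cite{chenavier2023bi} does not transfer directly, is the rough upper bound itself: in $d\geq 3$ one cannot rely on planar potential theory, and the potentially cooperative contribution of $\asymp n^{\gamma(d-1)}$ transient particles emitted from distant sources must be controlled globally. I anticipate handling this by a layer-by-layer induction, showing geometric decay of the density of $\Ang$ across successive transverse layers $R_{n/2+kL}\setminus R_{n/2+(k-1)L}$ on a scale $L$ slightly larger than $\sqrt{\log n}$; the core technical hurdle is calibrating $L$ so that the decay is provable in $d\geq 3$ while its cumulative effect sums to $\psi(n)\ll n$. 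Once this is achieved, the remainder of the proof follows well-established LBG lines.
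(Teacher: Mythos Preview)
Your outline misplaces the novelty and aims its effort at the wrong target. The ``rough global upper bound'' you propose to establish---$\Ang\subset R_{n/2+\psi(n)}$ with $\psi(n)=o(n)$, via a layer-by-layer density induction---is neither what the paper proves nor what is needed. In the paper the rough global upper bound (Theorem \ref{prop: BM}) is a \emph{cone} bound on the infinite aggregate, $A_n[\infty]\cap\Z_M^c\subset\mathscr{C}_\varepsilon$, and its role is to feed the donut method of Section \ref{section: donut method} and thereby prove strong stabilization; it is not invoked directly in the shape-theorem upper bound. Once stabilization has reduced the problem to finitely many sources, the trivial count $X(n)\leq n(2n^\gamma+1)^{d-1}$ already confines $\Ang$ to a slab of polynomial width, which is all one needs to take a union bound over candidate extremal points $z$. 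The layer-by-layer induction you identify as the ``main obstacle'' is therefore superfluous.

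What your outline does not capture is the actual mechanism of the upper bound. The paper fixes $z$ with $|z_1|=n/2+h(n)$, $h(n)\geq C\sqrt{\log n}$, and works on the event $\{|z_1|=X(n)\}$, on which $\Ang\subset R_{n/2+h(n)}$ deterministically. It then applies the tentacle/ball dichotomy of \cite{jerison2012logarithmic}: either fewer than $\beta h(n)^d$ particles lie in $\mathbb{B}(z,h(n))$ (thin tentacle, probability $\leq K_0 e^{-ch(n)^2}$), or at least $\beta h(n)^d$ do, in which case an Asselah--Gaudilli\`ere concentration (Lemma \ref{lemma AG Cheb}) shows this is also rare. The second branch \emph{uses the lower bound as input}, through control of the inner error $\delta_I(n)\leq\kappa\sqrt{\log n}$ with high probability; so the lower bound must be proved first, and it is done via the tile/cell scheme of \cite{asselah2013sublogarithmic} at scale $\sqrt{\log n}$, not the LBG ball argument at scale $\sqrt{n\log n}$ that you sketch. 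Your proposal omits the tentacle lemma, the conditioning on the extremal abscissa, and the dependence of the upper bound on the lower bound---these, not an intermediate slab bound, are the working parts of the proof.
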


Let us comment on this shape theorem (see Figures \ref{fig: fullplot} and \ref{fig: shapethm}). It says that at first order, the limiting aggregate $A_n[\infty]$ is of thickness $n$, which makes sense since $n$ particles are launched per source. Notice that Proposition \ref{prop: avg nb of particles on a line} confirms that fact; $n$ is  (exactly) the mean thickness of $A_n[\infty]$. Let us also remark that Theorem \ref{thm: shape} holds for the aggregate $A_n[\infty]$ restricted to the strip $\mathbb{Z}_{n^\alpha}$ (even large). Such a restriction is unavoidable since a.s.\, there  exists some pathological source $z$ (far away from the origin) for which all the $n$ particles always move in the direction of the abscissa, meaning that the site $z+(n,0,\ldots,0)$ belongs to $A_n[\infty]$. Furthermore, Theorem \ref{thm: shape} specifies the fluctuations of the aggregate $A_n[\infty]$ around its limiting shape $\mathcal{R}_{n/2}$ (both restricted to the strip $\mathbb{Z}_{n^\alpha}$). They are (at most) sublogarithmic while they are (at most) logarithmic in dimension $d=2$ \cite{chenavier2023bi}. This dichotomy between dimension $d=2$ and higher echoes the results of \cite{asselah2013sublogarithmic,JLS13}, in which it is proved that the fluctuations for the standard IDLA are also sublogarithmic when $d\geq 3$.

\commentK{Our second main result is a stabilization result for the aggregate $\An$. It ensures that particles emitted from sources far away from the origin do not reach regions near the origin. It puts forward an independence property between the aggregate $A_n[\infty] \cap \mathbb{Z}_M$ and  particles from afar, which, in itself, is a very interesting property. While we will not be using it in this present paper, such a property will be crucial in \cite{chenavier2025construction}, where we use it to generalize the constructions of IDLA forests from \cite{chenavier2023bi} to higher dimensions. }
\begin{theorem}{(Strong stabilization)}
	\label{thm: strong stab}
	Let $n\geq 0$ and $\alpha > 1$ . A.s.\,there exists an integer $M_0$ such that, for any integer $M \geq M_0$, the trajectory of any particle contributing to $A_n[\infty]$ and starting from a level larger than $M^\alpha$ does not visit the strip $\mathbb{Z}_{M}$.
\end{theorem}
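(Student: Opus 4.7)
The plan is a Borel--Cantelli argument on the events
\[
E_M := \bigl\{\exists\, z\in\mathcal{H} \text{ with } \|z\|\geq M^\alpha \text{ and a particle from }z\text{ whose trajectory visits }\Z_M\bigr\}.
\]
If $\sum_M P(E_M) < \infty$, then almost surely only finitely many $E_M$ occur, and the supremum of their indices provides the claimed $M_0$.

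A union bound reduces the estimate to a single-trajectory probability. Since $\mathcal H$ carries $O((M')^{d-2})$ sources at $\ell^\infty$-level $M'$, each emitting $n$ particles,
\[
P(E_M)\;\leq\; n\sum_{M'\geq M^\alpha}(M')^{d-2}\sup_{\|z\|=M'}P\bigl(X^{z,i}\text{ visits }\Z_M\bigr),
\]
where $X^{z,i}$ denotes the trajectory of the $i$-th particle from source $z$. As any point of $\Z_M$ lies at $\ell^\infty$-distance at least $M'-M$ from such a $z$, the supremum is bounded above by $P(\max_k\|X^{z,i}_k-z\|_\infty\geq M'-M)$. The trajectory $X^{z,i}$ being a simple symmetric random walk started at $z$ and stopped at the first time $T^{z,i}$ it exits the aggregate present at its emission, coordinate-wise Azuma--Hoeffding yields, for any deterministic $T^*$,
\[
P\Bigl(\max_{k\leq T^*}\|X^{z,i}_k - z\|_\infty\geq R\Bigr)\;\leq\; 2d\exp\!\Bigl(-\tfrac{R^2}{2dT^*}\Bigr),
\]
so a high-probability upper bound $T^{z,i}\leq T^*(n)$ uniform in $z$ would, with $R=M'-M\geq M^\alpha/2$, make the bound super-polynomially small in $M$.

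The main obstacle is obtaining such a uniform-in-$z$ bound on $T^{z,i}$ while \emph{avoiding any use of Theorem~\ref{thm: shape}}, since stabilization is meant to precede the shape theorem in the proof order. The planned approach is a purely local absorption estimate: at the launch of $X^{z,i}$, the current aggregate already contains contributions from the $O(r^{d-1})$ sources of $\mathcal H$ within distance $r$ of $z$ processed earlier, each having sent $n$ particles. On $\ell^\infty$-shells at radius $r$ well beyond the natural scale $n$, a volume count forces a positive density of vacant sites, so each traversal of such a shell by the SRW is absorbed with uniformly positive probability; chaining absorption across a geometric sequence of radii yields a polynomial (or stretched-exponential) tail $P(\max_k\|X^{z,i}_k-z\|_\infty\geq R)\leq C_s(n/R)^{s}$ with $s$ as large as desired. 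Choosing $s>d-1+1/\alpha$ suffices: the sum over $M'\geq M^\alpha$ is then $O(M^{\alpha(d-1-s)})$, summable in $M$, and Borel--Cantelli concludes. The technical heart is rendering the shell absorption fully rigorous, likely by bootstrapping from classical inner/outer IDLA bounds applied to a truncated local IDLA involving only nearby sources, so as to avoid circularity with the global shape theorem; particular care is required in dimension $d=3$, where the unstopped walk almost surely hits $\Z_M$ (its projection being a recurrent $2$D walk) and only the stopping $T^{z,i}$ prevents the trajectory from doing so.
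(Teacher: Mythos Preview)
Your Borel--Cantelli framework and union bound over sources match the paper's setup. The genuine gap is precisely where you locate the ``technical heart'': the local absorption estimate is asserted but not proved, and it is exactly the main difficulty the paper works to overcome. Two concrete problems arise. First, your volume count (``only $O(r^{d-1})$ nearby sources, each sending $n$ particles'') ignores contributions from sources at distance $>r$ from $z$; arguing that those far sources do not deposit mass near $z$ is essentially the stabilization statement you are trying to prove, so the bootstrap is circular as stated. Second, even granting a positive density of vacant sites on a shell, a simple random walk need not be absorbed with uniformly positive probability per shell crossing: absorption depends on the \emph{geometry} of the occupied set, and the walk could travel a long distance inside a thin filled ``tube'' without meeting a vacancy. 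The Azuma route via a uniform bound on $T^{z,i}$ does not help either, since nothing in your argument controls the stopping time independently of the aggregate's shape.

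The paper resolves this by first proving a \emph{rough global upper bound} (Theorem~\ref{prop: BM}): outside $\Z_M$, the aggregate $A_n[\infty]$ lies in a cone $\mathscr{C}_\varepsilon$ of arbitrarily small opening, with probability $1-O(M^{-L})$. This is established by induction on $n$ (adding one particle per source at a time), using a tentacle lemma and a ``donut'' crossing estimate (Proposition~\ref{prop: crossing prob}): a walk that stays inside $\mathscr{C}_\varepsilon$ while descending from level $l$ to level $M$ must cross $\asymp \log(l/M)$ nested donuts, each crossed with probability at most $1-(2d)^{-2}$. Once cone containment is in hand, the stabilization proof is short: on $\Bbis{n}$, any particle from $\mathrm{Ann}(M,j)$ that reaches $\Z_M$ before settling must stay in $\mathscr{C}_\varepsilon$ and hence cross $k\gtrsim K(\varepsilon)\log((j+1)M^{\alpha-1})$ donuts, giving a bound $(1-c)^k$ that beats the polynomial growth $\#\mathrm{Ann}(M,j)\lesssim j^{d-2}M^{\alpha(d-1)}$ when $\varepsilon$ is small. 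The cone bound is what supplies the missing geometric control that your density argument lacks; without it (or a substitute of comparable strength), the shell-absorption step does not go through.
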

In what follows, we assume $\alpha \geq 2$ to be an integer, as picking real values of $\alpha$ requires a heavy use of floor functions. This choice is made simply for the sake of lightening notation. 

Theorem \ref{thm: strong stab} is an extension of Theorem 3.1 of \cite{chenavier2023bi} (concerning the bidimensional case) to dimension $d \geq 3$. As we explain now, this extension is non-trivial and its proof requires a new approach. As in \cite{chenavier2023bi}, particles contributing to the aggregate $A_n[\infty]$ are sent by successive waves, i.e.\,from the annuli
\[
\mathrm{Ann}(M,j) := \mathcal{H} \cap \big( \mathbb{B}((j+2)M^\alpha) \setminus \mathbb{B}((j+1)M^\alpha ) \big), \; j \geq 0,
\] 
where $\mathbb{B}(\ell)$ denotes the \comment{(closed)} ball with radius $\ell$ and centered at the origin (w.r.t.\,the supremum distance $\| \cdot \|$). 
When $d=2$, the hyperplane of sources $\mathcal{H}$ corresponds to the vertical axis and $\mathrm{Ann}(M,j)$ admits only $2M^{\alpha}$ sources, for any $j$. When $d \geq 3$, $\# \mathrm{Ann}(M,j)$ \comment{depends also on $j$} and increases with $j$ as $j^{d-2}$ (this factor disappears when $d=2$). The same holds for the number of particles sent during the $(j+1)$-th wave, i.e.\,from $\mathrm{Ann}(M,j)$. In order to visit the strip $\mathbb{Z}_M$ before stopping, a particle sent during the $(j+1)$-th wave has to travel inside the current aggregate until reaching $\mathbb{Z}_M$. It is more or less likely according to the index $j$ and the thickness of the current aggregate which can then be viewed as a 'random environment' where the particle evolves before stopping. However, there is a certain deterioration of the 'environment' when successive waves are launched. Indeed, if $A_{j+1}$ denotes the aggregate obtained after sending the $j$-th wave, then particles of the $(j+1)$-th wave contribute to the growth of $A_{j+1}$ into $A_{j+2}$ (i.e.\,$A_{j+2}$ is thicker than $A_j$) making easier the travel inside the current aggregate to the strip $\mathbb{Z}_M$ for further particles. Hence, we have to deal with two opposite trends: as $j$ increases, particles of the $(j+1)$-th wave have to travel a longer way to reach $\mathbb{Z}_M$, but this way is more likely since the corresponding aggregate is thicker. In dimension $d=2$, the number of particles sent at each wave being weak (and constant w.r.t.\,$j$), the deterioration phenomenon of the 'environment' is negligible compared to the distance that particles must travel and the stabilization result is not too difficult to obtain in this case (see Section 3.1 of \cite{chenavier2023bi}). In dimension $d \geq 3$, because of the increase of the number of particles sent at each wave, the deterioration of the 'environment' previously mentioned is stronger and the proof used in \cite{chenavier2023bi} no longer applies. To address this issue, the idea consists in proving that the aggregate $A_n[\infty]$, beyond some level, is included within a cone centered at the origin (Theorem \ref{prop: BM}). This upper bound presents two advantages. First it is rough enough-- the thickness of the cone increases when one moves away from the origin --to take into account the deterioration of the 'environment' phenomenon and pathological sources (previously cited). Second, it is global since it concerns the whole aggregate outside some compact set. This result is referred to as a \textit{rough global upper bound}.\\

Our paper is organized as follows. In Section \ref{section: first properties}, we give some properties of $A_n[\infty]$ including invariance (in distribution) w.r.t.\,translations/symmetries and a mass transport principle. We also recall the so-called \textit{Abelian property} which ensures that the order in which the particles are sent is not important (in distribution) to define $A_n[\infty]$. In Section \ref{section: donut method}, we discretize our problem into donuts and establish a result (Proposition \ref{prop: crossing prob}) which will be used to derive the rough global upper bound. This upper bound is stated and proved in Section \ref{section: global upper bound}. In the last two sections, we prove Theorems \ref{thm: shape} and \ref{thm: strong stab}. 

\section{First properties}
\label{section: first properties}

\subsection{Mass transport property and symmetries}
In this section, we state some basic properties satisfied by the random aggregate $\An$. The first property states that given a line $\mathbb{Z}\times\{\comment{y}\}$, where $\comment{y}\in\mathbb{Z}^{d-1}$, the average amount of particles that settle on this line is equal to $n$. Here, we have written $\comment{\{y\}:=\{(y_2,y_3,\ldots, y_d)\}}$ for any $\comment{y=(y_2,y_3,\ldots, y_d)}\in \mathbb{Z}^{d-1}$. One can interpret this as the following statement: on average, the $n$ particles sent from each source $(0, \comment{y})$ settle on the line $\mathbb{Z}\times\{\comment{y}\}$.
\begin{proposition}
	\label{prop: avg nb of particles on a line}
	Let $n\geq 1$. For all $\comment{y}\in \mathbb{Z}^{d-1}$
	\[
	\mathbb{E}\left[\#\left(A_n[\infty]\cap\left(\mathbb{Z}\times\{\comment{y}\}\right)\right)\right]=n.
	\]
\end{proposition}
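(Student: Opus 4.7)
The plan is to apply a mass transport argument on the hyperplane $\mathcal{H} \simeq \ZZ^{d-1}$, using the translation invariance of the law of $\An$ along $\mathcal{H}$ (a property stated in this section). For $k,\ell \in \ZZ^{d-1}$, I set
\[
F(k,\ell) := \mathbb{E}\bigl[\,\#\{\text{particles emitted from source }(0,k)\text{ that settle on }\ZZ \times \{\ell\}\}\,\bigr].
\]
Since each site of $\An$ is added by exactly one particle and each particle is emitted from exactly one source, decomposing the aggregate according to the source of origin gives
\[
\mathbb{E}\bigl[\,\#(\An \cap (\ZZ \times \{j\}))\,\bigr] \;=\; \sum_{k \in \ZZ^{d-1}} F(k,j).
\]

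The translation invariance of the model along $\mathcal{H}$ means that $F$ is diagonally invariant: $F(k+z,\ell+z) = F(k,\ell)$ for every $z \in \ZZ^{d-1}$. The mass transport principle then yields
\[
\sum_{k \in \ZZ^{d-1}} F(k,j) \;=\; \sum_{k \in \ZZ^{d-1}} F(j,k),
\]
and the right-hand side is the expected total number of particles launched from the single source $(0,j)$ that settle somewhere in $\ZZ^d$. By construction exactly $n$ particles are launched from this source, and each one eventually stops at a well-defined site, so this total equals $n$.

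The only points requiring a brief check are the a.s.\ finiteness of the number of steps any given particle takes before settling (standard, since the current aggregate is finite at each launch) and the exchange of expectation and summation over $k$, which is immediate by Fubini--Tonelli since all terms are nonnegative. The substantive input is really the translation invariance of $F$, which is available from this section; given that, the argument reduces to a rearrangement of a nonnegative doubly-indexed sum, so I do not anticipate a genuine obstacle.
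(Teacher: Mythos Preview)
Your overall strategy---mass transport on $\ZZ^{d-1}$---is exactly the paper's, and the computation $\sum_k F(k,j)=\sum_k F(j,k)=n$ is correct once diagonal invariance of $F$ is in hand. The gap is in the justification of that diagonal invariance. You appeal to ``the translation invariance of the model along $\mathcal{H}$,'' but what is stated in this section (Proposition~\ref{prop: symmetry invariance}) is only that the \emph{set} $\An$ is translation invariant in law. Your function $F(k,\ell)$ depends on more than the set: it depends on the \emph{source-labeling}, i.e.\ on which particle landed at which site. In the construction of $\An$ the particles are launched in a deterministic order (lexicographic within levels, levels ordered by distance to the origin), and this order is not preserved under translation by $w\in\mathcal{H}$. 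The Abelian property guarantees that the resulting set has the same law regardless of order, but the labeling can and in general does change with the order. So translation invariance of the set alone does not give $F(k+w,\ell+w)=F(k,\ell)$.

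The paper addresses this explicitly: it builds an auxiliary aggregate $A'_n[\infty]$ in which particles are released according to i.i.d.\ uniform clocks rather than a deterministic rule, checks via the Abelian property that $A'_n[\infty]\overset{law}{=}\An$, and then observes that for this clocked version the pair (clocks, walks) is itself translation invariant in law, so the analogue of your $F$ is diagonally invariant. The paper even remarks that the corresponding computation ``is specific to $A'_n[\infty]$, and is \emph{not} true for $\An$.'' To repair your argument you need either this auxiliary construction or some other device that makes the source-labeling, and not just the occupied set, translation invariant.
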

Just as in Section 2 of \cite{chenavier2023bi}, a consequence of Proposition \ref{prop: avg nb of particles on a line} is a result of \emph{weak stabilization}, which claims that a particle sent far from the origin does not \emph{settle} close to the origin. This result differs from our result of strong stabilization given in Section \ref{section: strong stab}, as the latter shows that a particle sent far from the origin does not \emph{visit} areas close to the origin. Moreover, unlike strong stabilization, weak stabilization does not provide any exploitable bounds, which makes it impossible to use arguments such as the Borel-Cantelli Lemma. 

In the following proposition, we claim that the distribution of the random aggregate $A_n[\infty]$ is invariant with respect to translations and symmetries.
In what follows, we denote by $T_k$ the translation operator with respect to vector $k\in \mathcal{H}$ and $S_{k'}$ the point reflection operator across a point \comment{$k' \in \frac{1}{2}\mathcal{H}$}, that is:
\[
\forall x \in \mathbb{Z}^d,\ T_k(x):=x+k \quad \textrm{ and } \quad S_{k'}(x)=2k'-x.
\]
For $ B \subset \mathbb{Z}^{d-1}$, let
\[
T_kB=\{T_k(x),\ x \in B \} \quad \textrm{ and } \quad S_{k'}B=\{S_{k'}(x),\ x \in B \}.
\]
\begin{proposition}
	\label{prop: symmetry invariance}
	Let $n\geq 0,\ k\in\mathcal{H},\ k' \in \frac{1}{2}\mathcal{H}$.
	\begin{enumerate} 
		\item The distribution of $A_n[\infty]$ is invariant with respect to $T_k$, \comment{i.e.} $T_k \An \overset{\mathrm{law}}{=}\An$.
		
		\item The distribution of $A_n[\infty]$ is invariant with respect to $S_{k'/2}$, \comment{i.e.} \newline $S_{k'/2} \An \overset{\mathrm{law}}{=}\An$.
	\end{enumerate}
\end{proposition}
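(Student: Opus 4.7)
Both statements follow from combining two ingredients already available: the symmetries of the simple random walk on $\mathbb{Z}^d$, and the Abelian property recalled in this section, which guarantees that the law of $A_n[\infty]$ is unchanged whenever the sources of $\mathcal{H}$ are processed in any order in which every source is eventually visited.

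For (1), the plan is to build $A_n[\infty]$ from an i.i.d.\ family $(W_{s,i})_{s \in \mathcal{H},\,1 \le i \le n}$ of simple symmetric random walks with $W_{s,i}$ started at $s$. Since $k \in \mathcal{H}$, one has $T_k\mathcal{H} = \mathcal{H}$, and $T_k A_n[\infty]$ coincides with the aggregate built from the family $V_{s',i} := W_{s'-k,i} + k$ (a walk starting at $s'$), where the sources $s' \in \mathcal{H}$ are now processed by their distance to $k$ rather than to the origin. Translation invariance of the SRW together with the i.i.d.\ structure gives $(V_{s',i})_{s',i} \overset{law}{=} (W_{s',i})_{s',i}$. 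Applying the Abelian property to replace the distance-to-$k$ order by the distance-to-origin order then yields $T_k A_n[\infty] \overset{law}{=} A_n[\infty]$.

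For (2), the argument is identical with $T_k$ replaced by $S_{k/2}$. First, $S_{k/2}\mathcal{H} = \mathcal{H}$ since $S_{k/2}(x) = k - x$ preserves the first coordinate when $k \in \mathcal{H}$. The role of translation invariance is now played by the symmetry of the SRW increments: if $W_{s,i}$ is a SRW starting at $s$, then $k - W_{k - s',\,i}$ is a SRW starting at $s'$, and the joint i.i.d.\ structure across $(s,i)$ is preserved. The Abelian property then concludes as before. The only conceptual point throughout is the invocation of the Abelian property in the infinite-source setting; this is precisely the content of the property recalled in Section~\ref{section: first properties}, which combined with the a.s.\ monotonicity of $(A_n[M])_{M \ge 0}$ transfers to $A_n[\infty] = \bigcup_M A_n[M]$ with no further work needed.
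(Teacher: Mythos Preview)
Your approach is sound in outline but differs from the paper's intended route, and the last sentence glosses over a genuine (if minor) point. The paper does not prove Proposition~\ref{prop: symmetry invariance} directly; it says the proof ``can be dealt with in a similar manner'' to Proposition~\ref{prop: avg nb of particles on a line}, whose proof builds the auxiliary aggregate $A'_n[\infty]$ using i.i.d.\ uniform clocks $(\tau_{z,i})$ to determine the processing order. In that construction the order is itself a random function of an i.i.d.\ family indexed by $\mathcal{H}$, so translation or reflection of the whole family $(\tau_{z,i},S_{z,i})$ immediately yields $T_k A'_n[\infty]\overset{law}{=}A'_n[\infty]$ (and similarly for $S_{k/2}$), and one transfers back to $A_n[\infty]$ via $A'_n[\infty]\overset{law}{=}A_n[\infty]$. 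The random clocks are precisely what lets the paper avoid comparing different exhaustions of $\mathcal{H}$.

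Your direct argument instead compares the exhaustion $(\mathcal{H}_M)_{M\ge 0}$ with the shifted exhaustion $(\mathcal{H}_M+k)_{M\ge 0}$. The Abelian property stated in the paper is only for \emph{finite} source sets, and at level $M$ the two exhaustions use \emph{different} finite sets, so you cannot apply it at fixed $M$. What is needed is the sandwich $\mathcal{H}_M\subset \mathcal{H}_{M+\|k\|}+k\subset \mathcal{H}_{M+2\|k\|}$ together with the monotonicity ``more sources $\Rightarrow$ larger aggregate'' (itself a consequence of the finite Abelian property) to conclude that both increasing unions coincide a.s. This is a short argument, but it is not ``no further work needed,'' and it is exactly what the paper's random-clock device is designed to bypass. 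Once you insert this sandwiching step, your proof is complete and gives a legitimate alternative to the paper's method.
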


\subsection{Abelian property}
We give here the Abelian property, which states that altering the order in which particles are sent does not change the \emph{law} of the aggregate. We begin by defining the Diaconis-Fulton \emph{smash sum}: (see \cite{diaconis1991growth}).
For $A\subset \mathbb{Z}^d$ \comment{(possibly random)} and $z\in \mathbb{Z}^d$:
\begin{itemize}
	\item if $z\not\in A$, then $A \oplus \{z\} = A\cup\{z\}$;
	\item if $z\in A$, then $A \oplus \{z\}$ is the random set obtained by adding to $A$ the vertex on which a simple random walk started in $z$, \comment{independent of $A$}, exits $A$. 
\end{itemize}  
\begin{proposition}[Abelian property]
	Let $A$ and $\{z_1,\ldots, z_k\}$ be subsets of $\mathbb{Z}^d$. The distribution of 
	\[
	((A\oplus \{z_1\}) \oplus \{z_2\})\oplus \cdots \oplus \{z_k\}
	\]
	does not depend on the order of the $z_i$'s. That is, if we take $\sigma \in \mathfrak{S}_k$ a permutation of $\{1,\ldots, k\}$, then:
	\[
	((A\oplus \{z_1\}) \oplus \{z_2\})\oplus \cdots \oplus \{z_k\} \overset{\mathrm{law}}{=}  ((A\oplus \{z_{\sigma(1)}\}) \oplus \{z_{\sigma(2)}\})\oplus \cdots \oplus \{z_{\sigma(k)}\}.
	\]
\end{proposition}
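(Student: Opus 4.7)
The plan is to realize both iterated smash sums simultaneously on a common probability space via the \emph{stack representation} of Diaconis and Fulton \cite{diaconis1991growth}, and then to invoke the abelianness of chip-firing. Concretely, I would attach to every site $x \in \Z^d$ an independent i.i.d.\,sequence $(\xi^x_1,\xi^x_2,\ldots)$ of uniformly chosen nearest-neighbor directions; these stacks serve, in order, as the moves of any walker that ever visits $x$. With the stacks fixed, the smash sum $(A,z)\mapsto A\oplus\{z\}$ becomes a deterministic function of its inputs, and therefore so does any finite iteration of smash sums.

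I would then recast the iterated smash sum as a chip-firing dynamics on the integer lattice. Define
\[
\eta_0(x) := \mathbf{1}_A(x) + \#\{i\in\{1,\ldots,k\} : z_i = x\}, \quad x \in \Z^d.
\]
A site is \emph{unstable} when $\eta(x)\geq 2$; \emph{firing} $x$ removes one chip from $x$, pops the top arrow $\xi$ of the stack at $x$, and places one chip at $x+\xi$. A \emph{legal firing sequence} is any sequence of firings at currently unstable sites, and it \emph{stabilizes} $\eta_0$ if it terminates with a configuration bounded by $1$. The iterated smash sum $((A\oplus\{z_1\})\oplus\cdots)\oplus\{z_k\}$ is exactly the support of the stable configuration obtained by the particular legal sequence that fully resolves the excess at $z_1$ first, then at $z_2$, and so on. Permuting the $z_i$'s corresponds to choosing another legal firing sequence for the \emph{same} initial configuration $\eta_0$.

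The heart of the proof is the classical \emph{abelian property of chip-firing} (least action principle): for any finite initial configuration and any two legal firing sequences that stabilize it, the resulting stable configuration and the odometer function $u(x):=\#\{\text{firings at }x\}$ coincide. Granting this, both sides of the proposition are realized \emph{pathwise} on the coupling as the support of one and the same stable configuration, so in particular they agree in law. The nontrivial step to justify is the abelian lemma itself, and this will be the main obstacle; fortunately, it is purely combinatorial. The key observation is that firings at two distinct unstable sites commute as operators on $\eta$ (each firing alters only the chip count and stack at its own site and at one neighbor), and a short exchange argument then reconciles any two legal firing sequences, yielding equality of both the final configuration and the odometer.
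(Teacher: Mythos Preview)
Your argument is correct and is precisely the standard proof via the Diaconis--Fulton stack representation and the abelian lemma for chip-firing. Note, however, that the paper does not give its own proof of this proposition: it is stated as a known fact with a reference to \cite{diaconis1991growth}, and Section~2.3 only proves Propositions~\ref{prop: avg nb of particles on a line} and~\ref{prop: symmetry invariance}. So there is no ``paper's proof'' to compare against; what you have written is essentially the argument from the cited reference, and it is sound.
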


\subsection{Proofs of Propositions \ref{prop: avg nb of particles on a line} and \ref{prop: symmetry invariance}}
We only show the proof of Proposition \ref{prop: avg nb of particles on a line} since Proposition \ref{prop: symmetry invariance} can be dealt with in a similar manner. 
Our main idea is to build an auxiliary aggregate $A'_n[\infty]$ with the same law as $\An$, but for which it is simpler to show translation invariance. To do so, we construct $A'_n[\infty]$ in the same spirit as $\An$. Let $M\geq0$. We define $A'_n[M]$ similarly to $A_n[M]$, by sending $n$ particles per source of \comment{$\mathcal{H}_M:=\{0\}\times \llbracket -M, M \rrbracket^{d-1}$}, but this time the order is given by random clocks. More precisely, let $\left(\mathcal{U}_{z,i}\right)_{z\in \mathcal{H},\ 1\leq i \leq n }$ be a family of \comment{i.i.d.} uniform random variables on $[0,1]$. For each $z\in \mathcal{H}$ we can order these $n$ random variables in order to get an increasing family of clocks $(\tau_{z,i})_{1\leq i\leq n}$ in $[0,1]$. 
Now, with the collection of random clocks $\{\tau_{z,i}:\ z\in\mathcal{H},\ 1\leq i\leq n\}$ we can associate a family of independent symmetric random walks $\{S_{z,i}:\ z\in\mathcal{H},\ 1\leq i\leq n\}$ on $\mathbb{Z}^d$, \comment{issued in $z$} and independent of the family of clocks. Just like above, at time $\tau_{z,i}$, the $i$-th particle is sent from source $z\in \mathcal{H}$ and follows a trajectory given by $S_{z,i}$, adding a new site to the current aggregate. Let us specify that each particle's  trajectory is instantly realized and that it settles immediately. 
The aggregate $A'_n[M]$ is obtained following the same protocol as above by sending particles up to level $M$ according to the random clocks given by our family $\left(\mathcal{U}_{z,i}\right)_{\|z\| \leq M,\ 1\leq i \leq n}$. Using the Abelian property, we have 
\[
A'_n[M]\overset{\mathrm{law}}{=}A_n[M].
\]
By adapting Lemma 2.1 of \cite{chenavier2023bi}, we can easily show that a.s.\,for all $n, M \geq 0,$ \newline $A'_n[M]\subset A'_n[M+1]$. Then we define $A'_n[\infty]$ as the increasing union:
\[
A'_n[\infty]:=\bigcup_{M\geq 0}A'_n[M] \quad \text{ a.s. }
\]
Since both sequences $\left(A'_n[M]\right)_{M\geq 0}$ and $\left(A_n[M]\right)_{M\geq 0}$ are almost surely increasing and that $A'_n[M]\overset{\mathrm{law}}{=}A_n[M]$ for all $M\geq 0$, we have $A'_n[\infty]\overset{\mathrm{law}}{=}A_n[\infty]$.

We are now prepared to prove Proposition \ref{prop: avg nb of particles on a line}. Indeed, since $A'_n[\infty]\overset{\mathrm{law}}{=}A_n[\infty]$, it is sufficient to prove the same type of result for $A'_n[\infty]$. For $x,y \in \mathbb{Z}^{d-1}$, we let $Q_{x\to y}$ denote the number of particles sent from $(0,x)$ that settle on the line $\mathbb{Z}\times \{y\}$ \comment{in the construction of $A'_n[\infty]$}.
Now, for $A,B\subset \mathbb{Z}^{d-1}$, we define: 
\[
Q(A,B):=\mathbb{E}\left[\sum_{x\in A,\ y \in B} Q_{x\to y}\right].
\]
In particular, for all $\comment{y}\in\mathbb{Z}^{d-1}$, we have 
\[
\mathbb{E}\left[\#\left(\comment{A'_n[\infty]} \cap \left(\mathbb{Z}\times\{\comment{y}\}\right)\right)\right] = Q(\mathbb{Z}^{d-1},\{\comment{y}\}).
\]
Since $Q(\{\comment{y}\},\mathbb{Z}^{d-1})=n$, it is sufficient to prove that $Q(\mathbb{Z}^{d-1},\{\comment{y}\})=Q(\{\comment{y}\},\mathbb{Z}^{d-1})$. We show this using a mass transport argument (see Theorem 5.2 of \cite{benjamini2001percolation}). It is sufficient to show that $Q$ is diagonally invariant, that is: $Q(A+w,B+w)=Q(A,B)$ for all $w\in \mathbb{Z}^{d-1}$. This holds, since
\begin{align*}
	Q(A+w,B+w)&=\sum_{x\in A,\ y \in B}\mathbb{E}\left[Q_{x+w\to y+w}\right]\\
	&=\sum_{x\in A,\ y \in B}\mathbb{E}\left[Q_{x\to y}\right]\\
	&=Q(A,B),
\end{align*}
where the second line comes from the fact that 
\begin{align}
	Q_{x+w\to y+w}&=Q_{x+w\to y+w}\left((\tau_{z,i})_{\substack{z\in\mathcal{H}\\ 1\leq i\leq n}},\ (S_{z,i})_{\substack{z\in\mathcal{H}\\ 1\leq i\leq n}}\right)\label{eqn: Qxy} \\ 
	&\overset{\mathrm{a.s}}{=}Q_{x\to y}\left((\tau_{z-w,i})_{\substack{z\in\mathcal{H}\\ 1\leq i\leq n}},\ (S_{z-w,i})_{\substack{z\in\mathcal{H}\\ 1\leq i\leq n}}\right) \notag \\ 
	&\overset{\mathrm{law}}{=}Q_{x\to y}. \notag
\end{align}
Note that the computations in \eqref{eqn: Qxy} are specific to $A'_n[\infty]$, and are \emph{not} true for $\An$. The key argument here is that the particles in $A'_n[\infty]$ are not sent according to a specific order but according to a family of independent uniform clocks, implying that all particles play the same role for the aggregate.

\section{The donut method}
\label{section: donut method}

In this section, we introduce what will be commonly referred to throughout this paper as the \emph{donut method}. This argument will be particularly useful when coupled with the global upper bound given in Section \ref{section: global upper bound} to control the trajectory of a given particle. The method consists in building donuts, starting from the origin and up to any given level, and showing that a particle is unlikely to cross multiple donuts without settling beforehand. 
Let us begin by detailing the construction of our donuts, for which it is necessary to first define \emph{cones}. For $\varepsilon>0$, we define the cone of angle $\varepsilon$ as:
\begin{equation}
	\label{eqn: cone}
	\mathscr{C}_{\varepsilon}:=\bigcup_{l\geq 0}\Bigl\{z\in \mathbb{Z}^d,\ \|p_{\mathcal{H}}(z)\|=l,\ |z_1|\leq \varepsilon l	\Bigr\},
\end{equation}
where $p_{\mathcal{H}}$ is the operator realizing the orthogonal projection on $\mathcal{H}$. 
Let $\mathbb{B}_{d-1}(r)$ denote the $(d-1)$-dimensional lattice ball of radius $r$, that is 
\[
\forall r>0,\ \mathbb{B}_{d-1}(r):=\{x \in \mathbb{Z}^{d-1}: \|x\|\leq r\}.
\] 
Given a decreasing family of real numbers $(l_i)_{i\geq 0}$, we define the donut $\textbf{D}^i$ as:
\[
\forall i\geq 0,\ \textbf{D}^i:=\comment{\llbracket -\varepsilon l_i, \varepsilon l_i \rrbracket} \times \left(\mathbb{B}_{d-1}(l_i)\setminus \mathbb{B}_{d-1}(l_{i+1}) \right)\comment{.}
\]
We build each donut $\textbf{D}^i$ so that its length, which equals $2\varepsilon l_i$, is equal to its width $l_i-l_{i+1}$ (see Figure \ref{fig: donut example}: one may see this figure as the view along a vertical cut of our donuts in dimension 3).
This gives the following condition on $(l_i)_{i\geq 0}$:
\[
\forall i\geq 0,\ l_{i+1}=(1-2\varepsilon)l_i,
\]
with $\varepsilon< 1/2$. By induction, we get the general expression:
\[
\forall i\geq 0,\ l_i=(1-2\varepsilon)^{i}l,
\]
where $l=l_0$.
We consider the number of donuts between levels $l$ and $M$, with $M<l$, and define $k = k(l, M, \varepsilon)$ as the greatest integer such that:
\[
\sum_{i=0}^k 2\varepsilon l_i\leq l-M.
\]
Since $l_i=(1-2\varepsilon)^il$, for $\varepsilon$ taken small enough, we have:
\begin{equation}
	\label{eqn: value of k}
	k\geq \underbrace{\dfrac{-1}{2\log(1-2\varepsilon)}}_{K(\varepsilon)}\times \log\left(\dfrac{l}{M}\right).
\end{equation}
Notice here that $K(\varepsilon)$ can be taken arbitrarily large by taking $\varepsilon$ arbitrarily small. 
\begin{figure}[!h]
	\centering
	\begin{tikzpicture}[scale=0.3]
		\fill[color=gray!40] (-2.125,21.25) -- (-0.6,6) -- (0.6,6) -- (2.125,21.25) -- cycle;
		\draw (0, 19.5) -- (0.5, 19.5) ;
		\draw (0.5, 20) -- (0.5, 19.5);
		\filldraw (0,0) circle (1pt);
		\draw[->,very thick] (-6,0) -- (6,0);
		\draw[->,very thick] (0,-2) -- (0,21.5);
		\draw (0,0) -- (-2.125,21.25);
		\draw (0,0) -- (2.125,21.25);
		\draw [<->] (2.5,20) -- (2.5,16);
		\draw [<->] (2.1,16) -- (2.1,12.8);
		\draw [<->] (1.78,12.8) -- (1.78,10.24);
		\draw (2.5,18) node[right] {$2\varepsilon l$};
		\draw (2.1,14.4) node[right] {$2\varepsilon l_1$};
		\draw (1.78,11.52) node[right] {$2\varepsilon l_2$};
		\draw (0,20.5) node[left] {$l$};
		\draw (0,16.5) node[left] {$l_1$};
		\draw (0,13.3) node[left] {$l_2$};
		\draw (0,10.74) node[left] {$l_3$};
		\draw (0,8.692) node[left] {$l_4$};
		\draw (0,7.0536) node[left] {$l_5$};
		\draw (0,20) node {$\times$};
		\draw (0,16) node {$\times$};
		\draw (0,12.8) node {$\times$};
		\draw (0,10.24) node {$\times$};
		\draw (0,8.192) node {$\times$};
		\draw (0,6.5536) node {$\times$};
		\draw [teal](-2,20) rectangle (2,16);
		\draw [teal](-1.6,16) rectangle (1.6,12.8);
		\draw [teal](-1.28,12.8) rectangle (1.28,10.24);
		\draw [teal](-1.024,10.24) rectangle (1.024,8.192);
		\draw [teal](-0.8192,8.192) rectangle (0.8192,6.5536);
		\draw [dashed, blue] (-7,6)--(7,6);
		\draw [blue] (-7,6) node[left] {$M$};
		\draw (8,0) node[below] {$\mathbb{Z}\times\{0\}$};
	\end{tikzpicture}
	\caption{Partition into donuts}
	\label{fig: donut example}
\end{figure}

Let us now briefly explain the reasoning behind the construction of our donuts. Our method will be particularly useful to show that a particle sent far away from the origin is highly unlikely to travel close to the origin while staying \emph{within} the cone. For a particle to do so it necessarily has to travel through many donuts without ever exiting the cone, since the donuts are built in such a way that they wrap around the cone $	\mathscr{C}_{\varepsilon}$. Such an event is handled by the following Proposition.
\begin{proposition}
	\label{prop: crossing prob}
	Let $M\geq 1$ and $\varepsilon>0$. Fix $(S_t)_{t\geq 0}$ a simple symmetric random walk starting from some source of $\mathcal{H}\setminus\mathcal{H}_{M}$ and consider the cone $\mathscr{C}_{\varepsilon}$ defined as in \eqref{eqn: cone}. For $i\geq 1$, let
	\[
	A_i=\left\lbrace\begin{array}{c}
		\mbox{ The walk crosses the $i$ donuts $\textbf{D}^0,\dots,\ \textbf{D}^{i-1}$ }\\
		\mbox{ without exiting the cone $	\mathscr{C}_{\varepsilon}$}\\
	\end{array}
	\right\rbrace ,
	\]
	and let $A_0=\Omega$.
	Then, for any $i\geq 0$,
	\[
	\mathbb{P}\left(A_i\right)\leq (1-c)^i,
	\]
	where $c=(2d)^{-2}$.
\end{proposition}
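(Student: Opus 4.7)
The plan is to proceed by induction on $i$ using the strong Markov property. The base case $i=0$ is immediate since $A_0=\Omega$. For the inductive step, observe that $A_i \subset A_{i-1}$, and on $A_{i-1}$ the walk has reached the inner boundary of $\mathbf{D}^{i-2}$, that is, the outer boundary of $\mathbf{D}^{i-1}$, without exiting the cone. Let $\tau$ be the first hitting time of the outer face of $\mathbf{D}^{i-1}$; this is a stopping time, finite on $A_{i-1}$, and on that event $S_\tau$ lies in $\mathscr{C}_\varepsilon$. By the strong Markov property,
$$\mathbb{P}(A_i) = \mathbb{E}\Bigl[\mathbf{1}_{A_{i-1}}\,\mathbb{P}_{S_\tau}\bigl(\text{the walk crosses } \mathbf{D}^{i-1} \text{ while staying in } \mathscr{C}_\varepsilon\bigr)\Bigr].$$
Hence it suffices to establish the \emph{one-donut escape estimate}: uniformly over points $z$ on the outer face of $\mathbf{D}^{i-1}$ inside $\mathscr{C}_\varepsilon$,
$$\mathbb{P}_z\bigl(\text{the walk exits } \mathscr{C}_\varepsilon \text{ before crossing } \mathbf{D}^{i-1}\bigr) \geq c = (2d)^{-2}.$$
Once this is shown, one obtains $\mathbb{P}(A_i) \leq (1-c)\,\mathbb{P}(A_{i-1})$, which closes the induction.

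The key structural input for the one-donut estimate is the cubic aspect ratio of the donut: by construction, the radial extent $l_{i-1}-l_i = 2\varepsilon l_{i-1}$ equals the $z_1$-extent $2\varepsilon l_{i-1}$, so $\mathbf{D}^{i-1}$ is essentially a cube of side $2\varepsilon l_{i-1}$. The numerical value $(2d)^{-2}$ suggests isolating a single two-step event of probability $(2d)^{-2}$ that forces the walk out of the cone. A natural such event is the following: at the entry point, with probability $1/(2d)$ the walk takes one step in the direction $-e_j$ of a maximizing coordinate of $p_\mathcal{H}(S_\tau)$, strictly decreasing $\|p_\mathcal{H}\|$ while $|z_1|$ is unchanged; and with a further (independent) probability $1/(2d)$ it next takes a step along $\pm e_1$ that increases $|z_1|$. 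One then argues that, combined with the position on the outer boundary and the equality of the donut's radial and transverse sizes, these two critical steps suffice to push $|z_1|$ above $\varepsilon\|p_\mathcal{H}\|$, i.e.\ out of $\mathscr{C}_\varepsilon$, under a worst-case (cone-boundary) configuration.

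The main obstacle I anticipate is that the entry position $z$ on the outer face of $\mathbf{D}^{i-1}$ can be very far from the cone boundary in the $z_1$ direction — in which case no pair of specific moves can by itself produce cone exit. One must therefore upgrade the two-step scenario into a uniform bound, by decomposing the walk's trajectory inside $\mathbf{D}^{i-1}$ according to the first time it reaches a suitable near-cone-boundary configuration, or by appealing to a harmonic-measure/reflection argument on a cube of side $2\varepsilon l_{i-1}$ that uses the equal aspect ratio to guarantee that harmonic measure of the outer face puts mass at least $(2d)^{-2}$ on the union of the cone faces. Since the required constant $c$ depends only on $d$, and not on $\varepsilon$, $M$, or $i$, one can afford quite crude estimates at this step; the delicate point is ensuring the lower bound is truly uniform over entry positions, which is where the cubic aspect ratio becomes indispensable.
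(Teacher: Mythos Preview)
Your inductive framework via the strong Markov property is correct and matches the paper's approach. The gap is in the one-donut escape estimate, which you correctly identify as the crux but do not actually resolve.

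Your two-step scenario fails for exactly the reason you state: the entry point on the outer face of $\mathbf{D}^{i-1}$ may have $z_1=0$, at distance of order $\varepsilon l_{i-1}$ from the cone boundary, and no fixed number of prescribed steps bridges such a gap. Your proposed fixes remain too vague to close the argument. In particular, ``first time the walk reaches a near-cone-boundary configuration'' does not help, because the walk can stay near $z_1=0$ all the way to the inner ring and never come close to the cone boundary before crossing; and ``harmonic measure of the outer face on the cone faces'' is not the right object either, since from a point on the outer ring a cube of side $2\varepsilon l_{i-1}$ centred at the walk pokes out of the annular region in the $\mathcal{H}$-projection, so the cube-symmetry argument is unavailable there.

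The key idea you are missing is to apply the Markov property not at the outer face but at the \emph{middling slice} of the donut: the set of points whose $\mathcal{H}$-projection lies on the sphere of radius $(l_{i-1}+l_i)/2=(1-\varepsilon)l_{i-1}$. To cross $\mathbf{D}^{i-1}$ from outer to inner ring, the walk must hit this slice. From any point $y$ on the middling slice, the $\ell^\infty$-ball $\mathbb{B}_d(y,\varepsilon l_{i-1})$ has its $\mathcal{H}$-projection entirely contained in the annulus $\mathbb{B}_{d-1}(l_{i-1})\setminus\mathbb{B}_{d-1}(l_i)$ --- this is precisely where the equal aspect ratio of the donut is used. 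By the symmetry of simple random walk on a cube, the walk exits $\mathbb{B}_d(y,\varepsilon l_{i-1})$ through each of its $2d$ faces with probability exactly $1/(2d)$; since $|y_1|\le\varepsilon l_{i-1}$, at least one of the two $e_1$-faces of this ball lies at first coordinate $\ge\varepsilon l_{i-1}$ in absolute value while the remaining coordinates are still in the annulus. One further $e_1$-step, with probability $1/(2d)$, then certifies exit from the donut through a ``good'' side, hence exit from the cone. This yields the uniform escape probability $(2d)^{-2}$.

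In short, the missing ingredient is the choice of restart location: you restart at the outer face, which is radially off-centre so that no cube of the needed size fits inside the annular region; the paper restarts at the radial midpoint, where it does.
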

Note that what we mean by a walk or particle crossing donut $\textbf{D}^i$ is for it to reach the inner ring of $\textbf{D}^{i}$ without ever exiting $\textbf{D}^{i}$.
\newline

Notice that for a walk to cross a donut (from the outer ring to the inner ring), it already needs to get through the middle of that donut.  To deal with this property, let us introduce the notion of 'middling slice' of a donut. Let $i\geq 0$ and consider the $i$-th donut ${\textbf{D}^i =\llbracket -\varepsilon l_i, \varepsilon l_i \rrbracket \times \left(\mathbb{B}_{d-1}(l_i)\setminus \mathbb{B}_{d-1}(l_{i+1}) \right)}$. Define the lattice sphere of radius $s$ as:
\[
\forall s\geq 0,\ \mathbb{S}_{d-1}(s):=\{x\in\mathbb{Z}^{d-1},\ \|x\|=s\}.
\]
Now, notice that $\frac{l_i +l_{i+1}}{2}=(1-\varepsilon)l_i$. Define the middling slice of $\textbf{D}^i$ as 
\[
\Cmid:=\comment{\llbracket -\varepsilon l_i, \varepsilon l_i \rrbracket} \times \mathbb{S}_{d-1}\left((1-\varepsilon)l_i \right).
\]
Additionally, define the exterior border of $\textbf{D}^i$ as 
\[
\Dext^i=\Bigl(\comment{\rrbracket -\infty,-\varepsilon l_i\rrbracket \cup \llbracket \varepsilon l_i,+\infty\llbracket }\Bigr) \times \Bigl(\mathbb{B}_{d-1}(l_i)\setminus \mathbb{B}_{d-1}(l_{i+1})\Bigr).
\]
The following result shows that a walk started from the middling slice of a donut has a positive probability of exiting the donut through $\Dext^i$ and will be used to derive Proposition \ref{prop: crossing prob}. 
\begin{lemma}
	\label{lemma: Dext prob}
	Let $y \in \emph{\textbf{m}}_i$  and let $(S_t)_{t\geq 0}$ be a simple symmetric random walk on $\mathbb{Z}^d$ started at $y$. For all $i\geq 0$, we introduce the stopping time $\tau_y=\inf\{t\geq 0,\ S_t \notin \mathbb{B}_d(y,\varepsilon l_i)\}$. We have:
	\[
	\mathbb{P}_y\left(S_{\tau_y}\in \emph{\textbf{D}}_{\mathrm{ext}}^i\right)\geq \dfrac{1}{2d}.
	\]
\end{lemma}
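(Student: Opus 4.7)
The plan is to exploit the symmetries of the simple symmetric random walk together with the cubical shape of $\mathbb{B}_d(y, \varepsilon l_i)$, which is a sup-norm ball. First, I would translate coordinates so that $y$ sits at the origin; then $\mathbb{B}_d(y, \varepsilon l_i)$ becomes the centered cube $C = \lb -R, R \rb^d$ with $R = \lfloor \varepsilon l_i \rfloor$, and $\tau_y$ is the exit time of an SRW starting at $0$ from $C$. Since each step changes only one coordinate by $\pm 1$, the exit location must lie on exactly one of the $2d$ faces
\[
F_k^{\pm} := \bigl\{x \in \mathbb{Z}^d:\ x_k = \pm(R+1),\ |x_j| \leq R \text{ for all } j \neq k\bigr\},\quad k = 1,\ldots,d.
\]

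The key step is the symmetry argument: the law of the SRW started at $0$ and the cube $C$ are both invariant under arbitrary coordinate permutations and individual sign flips. This group acts transitively on the $2d$ faces $F_k^{\pm}$, so $\mathbb{P}_y(S_{\tau_y} \in F_k^{\pm})$ is the same for every face, and since they sum to $1$, each equals $1/(2d)$. In particular $\mathbb{P}_y(S_{\tau_y} \in F_1^{+}) = \mathbb{P}_y(S_{\tau_y} \in F_1^{-}) = 1/(2d)$.

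It then remains to check that at least one of these two first-coordinate exits lands, after undoing the translation, in $\Dext^i$. An exit through $F_1^{+}$ means $S_{\tau_y, 1} = y_1 + R + 1$ and $|S_{\tau_y, j} - y_j| \leq R$ for $j \geq 2$; symmetrically for $F_1^{-}$. Since $R + 1 \geq \varepsilon l_i$ and $|y_1| \leq \varepsilon l_i$ (as $y \in \Cmid$), choosing the sign equal to that of $y_1$ (either one if $y_1 = 0$) yields $|S_{\tau_y, 1}| \geq \varepsilon l_i$, meeting the first-coordinate condition in the definition of $\Dext^i$. For the remaining coordinates, the bound $\|S_{\tau_y}' - y'\|_\infty \leq R \leq \varepsilon l_i$, combined with $\|y'\| = (1-\varepsilon) l_i$ (because $y \in \Cmid$) and the reverse triangle inequality in sup-norm, gives
\[
\|S_{\tau_y}'\| \in \bigl[(1-2\varepsilon)\, l_i,\ l_i\bigr] = [l_{i+1},\ l_i],
\]
placing $S_{\tau_y}'$ inside $\mathbb{B}_{d-1}(l_i) \setminus \mathbb{B}_{d-1}(l_{i+1})$, as required.

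I do not anticipate any substantial obstacle here. The only minor subtlety is the boundary case $\|S_{\tau_y}'\| = l_{i+1}$, which the inequalities above do not strictly exclude; this is easily absorbed by the floor in the integer radius $R$, by a generic choice of $\varepsilon$, or by the same conventions used elsewhere in Section~\ref{section: donut method}.
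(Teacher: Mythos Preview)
Your proof is correct and follows essentially the same approach as the paper: both arguments use the symmetry of the simple random walk on the centered sup-norm cube to conclude that each of the $2d$ exit faces has probability $1/(2d)$, and then check geometrically that at least one first-coordinate face lies in $\Dext^i$. The paper's version is terser (it simply asserts $\mathbb{B}_d(y,\varepsilon l_i) \subseteq \mathbb{Z}\times \bigl(\mathbb{B}_{d-1}(l_i)\setminus \mathbb{B}_{d-1}(l_{i+1})\bigr)$ and that one face lands in $\Dext^i$), whereas you spell out the triangle-inequality computation explicitly; the minor boundary issue you flag at $\|S'_{\tau_y}\| = l_{i+1}$ is glossed over in the paper as well.
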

\begin{proof}[Proof of Lemma \ref{lemma: Dext prob}:]
	Let $y\in \Cmid$. Notice that $\mathbb{B}_d(y,\varepsilon l_i) \subseteq \mathbb{Z}\times \Bigl(\mathbb{B}_{d-1}(l_i)\setminus \mathbb{B}_{d-1}(l_{i+1})\Bigr)$, and that $\mathbb{B}_d(y,\varepsilon l_i)$ has at least one of its $2d$ faces, say $\mathbf{F}$, included in $\Dext^i$.
	By an argument of symmetry, we have 
	\[
	\mathbb{P}_y(S_{\tau_y} \in \mathbf{F})=\dfrac{1}{2d}.
	\]
	Now, since $\mathbb{P}_y(S_{\tau_y} \in \mathbf{F}) \leq \mathbb{P}_y(S_{\tau_y}\in \Dext^i)$, we have the desired result.
\end{proof}

\begin{proof}[Proof of Proposition \ref{prop: crossing prob}]
	The case where $i=0$ is trivial. Let ${i\geq 1}$. Notice that the sequence of events $(A_i)_{i\geq 0}$ is decreasing, so 
	$
	\mathbb{P}(A_i)=\mathbb{P}(A_i | A_{i-1})\mathbb{P}(A_{i-1}).
	$
	Thus, it is sufficient to prove that ${\mathbb{P}(A_i | A_{i-1})\leq (1-c)}$.
	Since we are considering events where the walk crosses donuts from outer ring to inner ring, we will refer to \emph{good sides} as sides orthogonal to the '$x$' axis, whereas \emph{bad sides} will refer to sides that are \emph{not good}.
	
	Let us define the following events:
	\[
	M_i = \biggl\{ \mbox{The random walk reaches $\textbf{m}_i$ } \biggl\},\\
	\]
	\[
	D_i = \left\lbrace\begin{array}{c}
		\mbox{  The random walk exits the $i$-th donut $\textbf{D}^{i-1} $ }\\
		\mbox{ through \comment{one of the bad sides} }\\
	\end{array}
	\right\rbrace .
	\]
	Additionally, define the sequence of stopping times $T_i:=\inf\{t\geq 0,\ S_t \in \textbf{m}_i\}$. As mentioned earlier, for the walk to cross a donut (from outer ring to inner ring) it necessarily has to cross the middling slice of the donut, and since $\mathscr{C}_{\varepsilon}\cap \mathbb{Z}_M^c \subset \bigcup_{j\geq 0} \textbf{D}^{j}$, this implies that on the event $A_i$, the walk crossed the $i$ donuts $\textbf{D}^0,\dots,\ \textbf{D}^{i-1}$ without ever exiting through a good side. Therefore:
	\begin{align*}
		\mathbb{P}(A_i | A_{i-1})&\leq \mathbb{P}(M_i\cap D_i | A_{i-1})\\
		&\leq \sum_{m\in \textbf{m}_i}\mathbb{E}\left[\mathbbm{1}_{D_i\cap M_i}\mathbbm{1}_{S_{T_i}=m} |\ A_{i-1}\right]\\
		&\leq \sum_{m\in \textbf{m}_i}\mathbb{E}\left[\mathbbm{1}_{D_i}\mathbbm{1}_{S_{T_i}=m} |\ A_{i-1}\right]\\
		&\leq \sum_{m\in \textbf{m}_i}\mathbb{P}\left(D_i |\ S_{T_i}=m,\ A_{i-1}\right)\mathbb{P}(S_{T_i}=m |\ A_{i-1}).
	\end{align*}
	Now, by the Markov property, for all $m\in \Cmid,\ \mathbb{P}\left(D_i |\ S_{T_i}=m,\ A_{i-1}\right)\leq\mathbb{P}_m(D_i)$.
	It remains to bound $\mathbb{P}_m(D_i)$, 
	which is an immediate consequence of Lemma \ref{lemma: Dext prob}. Let us first define 
	$
	\partial\Dext^i:=\{-\varepsilon l_i-1,\varepsilon l_i+1\}\times \Bigl(\mathbb{B}_{d-1}(l_i)\setminus \mathbb{B}_{d-1}(l_{i+1})\Bigr).
	$
	Notice that if the walk hits a site of $\partial\Dext^i$, then it has necessarily exited the donut through a good side. Hence, using the result of Lemma \ref{lemma: Dext prob}:
	\begin{align*}
		\mathbb{P}_m(D_i^c)&\geq\mathbb{P}_m\left(\{S_{\tau_m}\in \Dext^i\} \cap \{S_{\tau_m +1} \in \partial\Dext^i\}\right)\\
		&\geq \mathbb{P}_m(S_{\tau_m +1} \in \partial\Dext^i | S_{\tau_m}\in \Dext^i )\mathbb{P}_m(S_{\tau_m}\in \Dext^i)\\
		&\geq \mathbb{P}_{\Dext^i}(S_1 \in \partial\Dext^i)\mathbb{P}_m(S_{\tau_m}\in \Dext^i)\\
		&\geq\dfrac{1}{2d}\times \dfrac{1}{2d}.
	\end{align*}
	This concludes the proof, since 
	\begin{align*}
		\mathbb{P}(A_i | A_{i-1}) &\leq \sum_{m\in \textbf{m}_i} \mathbb{P}_m(\comment{D_i})\mathbb{P}(S_{T_i}=m |\ A_{i-1})	\\
		&\leq \left(1-\frac{1}{4d^2}\right)\sum_{m\in \textbf{m}_i} \mathbb{P}(S_{T_i}=m |\ A_{i-1}) \leq 1-\frac{1}{4d^2}.
	\end{align*} 
\end{proof}

\section{A rough global upper bound}
\label{section: global upper bound}

As seen in dimension 2 (see \cite{chenavier2023bi}, Section 6), when restricted to a certain level, the aggregate $A_n[\infty]$ is contained within a rectangle of length \comment{roughly} $n$, with high probability. In this section, we prove that above a certain level the aggregate is entirely contained within a cone with high probability. To state the result, we first give some notation. For any source $z \in \mathcal{H}$ and given a realization of $A_n[\infty]$, we define:
\[
X_z(n):=\max\Bigl\{|z'_1|,\ z'\in A_n[\infty],\ z'_i=z_i \quad \forall i=2,\dots,d \Bigr\}.
\]
The random variable $X_z(n)$ is the absolute value of the first coordinate of the furthest occupied site on the line of level $z$. 
Moreover, for any $0<\varepsilon$ and $M\geq 1$ we let:
\[
\B{n}=\bigcup_{l\geq M}\{\exists z \in \mathcal{H}: \|z\|=l,\ X_z(n)>\varepsilon l\}.
\]
The event $\B{n}$ describes the situation where one or more particles have settled at a distance greater than $\varepsilon l$ on some line of distance $l\geq M$ from the origin. The following proposition shows that such an event occurs with small probability.

\begin{theorem}
	\label{prop: BM}
	Let $n\geq 1$. \comment{For all $L>1$, for all $\varepsilon>0$, there exists a positive constant $C_{\varepsilon,n}$ such that for all $M\geq 2$} :	
	\[
	\mathbb{P}\left(\B{n}\right)\leq \dfrac{C_{\varepsilon,n}}{M^L} ~.
	\]
\end{theorem}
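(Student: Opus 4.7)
The plan is to combine translation invariance, a union bound over levels, and the donut method (Proposition \ref{prop: crossing prob}) to turn $\mathbb{P}(\B{n})$ into a sum with polynomial decay in $M$.

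I would begin by exploiting Proposition \ref{prop: symmetry invariance}. For any $z=(0,z_2,\ldots,z_d)\in\mathcal{H}$, the translation by $-(0,z_2,\ldots,z_d)\in\mathcal{H}$ sends the line $\mathbb{Z}\times\{(z_2,\ldots,z_d)\}$ onto the $x$-axis while preserving the law of $\An$, so $X_z(n)\stackrel{\mathrm{law}}{=}X_0(n)$. Since the number of $z\in\mathcal{H}$ with $\|z\|=l$ is of order $l^{d-2}$, a union bound yields
\[
\mathbb{P}\!\left(\B{n}\right)\leq C_d\sum_{l\geq M}l^{d-2}\,\mathbb{P}\!\left(X_0(n)>\varepsilon l\right),
\]
so it suffices to prove a tail bound of the form $\mathbb{P}(X_0(n)>r)\leq C_{\varepsilon,n}/r^{L+d-1}$ for any prescribed exponent.

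To obtain this tail bound, I would write $\mathbb{P}(X_0(n)>r)\leq\sum_{|y_1|>r}p(y_1)$ where, by translation invariance again, $p(y_1):=\mathbb{P}((y_1,y_2,\ldots,y_d)\in\An)$ depends only on $y_1$, and then control $p(y_1)$ for large $|y_1|$. For $y=(y_1,\mathbf{0})$ to belong to $\An$, some particle sent from a source $(0,s)\in\mathcal{H}$ must have a SSRW trajectory visiting $y$. I would split sources into \emph{close} ones, $\|s\|\leq 2|y_1|$, and \emph{far} ones, $\|s\|>2|y_1|$. For close sources, there are only $O(|y_1|^{d-1})$ of them, and the donut method of Section \ref{section: donut method}, applied at scale $|y_1|$ with an auxiliary parameter $\varepsilon'$ chosen small, forces any reaching trajectory to cross $\Omega(K(\varepsilon')\log|y_1|)$ concentric donuts, which costs a factor $(1-c)^k$ by Proposition \ref{prop: crossing prob}; for far sources the donuts are set up between scales $|y_1|$ and $\|s\|$, costing a factor $(1-c)^{K(\varepsilon')\log(\|s\|/|y_1|)}$, and summation over $\|s\|$ converges since $K(\varepsilon')$ can be taken arbitrarily large by shrinking $\varepsilon'$. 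Combining the two contributions would give $p(y_1)\leq C_{\varepsilon,n}/|y_1|^{L+d}$, and then summing over $|y_1|>r$ and over $l\geq M$ delivers the announced $C_{\varepsilon,n}/M^L$.

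The main obstacle is the middle step. The naive union bound
\[
\sum_{s\in\mathbb{Z}^{d-1}}G((0,s),(y_1,\mathbf{0}))\;\asymp\;\int_0^\infty r^{d-2}(y_1^2+r^2)^{(2-d)/2}\,\mathrm{d}r
\]
diverges for every $d\geq 3$, which is exactly the ``deterioration of the environment'' obstacle described in the introduction and the reason why Proposition \ref{prop: crossing prob} is needed in the first place. Adapting the donut method, whose cone $\mathscr{C}_{\varepsilon'}$ is centered at the origin and oriented along $e_1$, to a SSRW aimed at the specific off-axis target $(y_1,\mathbf{0})$ requires reorienting the donut structure so that the target lies near the innermost ring, and carefully combining the exit-through-good-sides estimate of Lemma \ref{lemma: Dext prob} with the transient behaviour of the walk at the finest scale; this geometric combinatorial setup is the technical heart of the proof.
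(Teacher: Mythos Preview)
Your proposal has a genuine circularity that you partly recognise but do not resolve. The donut estimate of Proposition \ref{prop: crossing prob} bounds the probability that a walk crosses many donuts \emph{while remaining inside the cone $\mathscr{C}_{\varepsilon}$}. To turn this into a bound on $p(y_1)=\mathbb{P}((y_1,\mathbf{0})\in\An)$, you would need to know that the particle's trajectory up to settling is confined to such a cone, i.e.\ that the current aggregate is already contained in a cone --- which is precisely the content of $\Bbis{n}$, the statement you are trying to prove. Without that containment, a particle from a close source can reach $(y_1,\mathbf{0})$ through an arbitrarily thick aggregate, and the free-walk Green's function bound you write down diverges, as you observe. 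Your suggestion to ``reorient the donut structure'' around the target does not help: Proposition \ref{prop: crossing prob} still only penalises walks that stay in a cone, and a free SSRW is under no such constraint.

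The paper breaks this circularity by \emph{induction on $n$}, the number of particles per source. One proves a slightly stronger statement (Proposition \ref{prop: BMN}) with carefully nested cones $\mathscr{C}_{\varepsilon_n}$ and levels $M_n$. The base case $n=1$ is trivial since $X_z(1)=0$. For the inductive step, one conditions on $\Bbisn{n}$, so that $A_n[\infty]\cap\Z_{M_n}^c\subset\mathscr{C}_{\varepsilon_n}$ by hypothesis; then one adds a single extra particle from each source and asks for the first site $Z_{n+1}$ where the enlarged aggregate overflows the slightly wider cone $\mathscr{C}_{\varepsilon_{n+1}}$. A tentacle/ball dichotomy (Lemma \ref{lemma: tentacle}, adapted from \cite{jerison2012logarithmic}) handles the case of a thin tentacle reaching $Z_{n+1}$, while the ball case forces many of the new particles to have travelled a long way within the \emph{known} cone $\mathscr{C}_{\varepsilon_{n+1}}$, and it is here --- and only here --- that the donut method legitimately applies. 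This inductive device, together with the tentacle lemma, is the missing idea in your outline.
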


As a consequence of the above result, a.s.\,there exists a random integer $M_0$ such that for any $M \geq M_0$, the aggregate $\An \cap \mathbb{Z}_M^c$ is included in $\mathscr{C}_{\varepsilon}$. Theorem \ref{prop: BM} can be understood as the fact that $\An\cap \mathbb{Z}_M^c$ is included within $\mathscr{C}_{\varepsilon}$ with high probability since 
\[
\Bbis{n}=\{\An \cap \mathbb{Z}_M^c \subset \mathscr{C}_{\varepsilon} \}.
\] The property $\An \cap \mathbb{Z}_M^c \subset\mathscr{C}_{\varepsilon}$ is referred to as the \textit{rough global upper bound}.  This upper bound  will be very useful when coupled with the donut argument of Section \ref{section: donut method}, as it will allow us to show that particles are unlikely to travel long distances while staying within the cone.

The proof of Theorem \ref{prop: BM} will be shown by induction over $n$. Our idea is that if for some fixed $n$, the aggregate $\An$ is contained within a cone of angle $\varepsilon$ for some $\varepsilon>0$, then we show that after launching an additional particle from each source, the resulting aggregate is very likely contained in a slightly larger cone of angle $\varepsilon'>\varepsilon$.
To prove Theorem \ref{prop: BM}, we first show a stronger version in Proposition \ref{prop: BMN}. Before we  give this result, we need to build two increasing sequences $(M_n)_{n\geq 0}$ and $(\varepsilon_n)_{n\geq 0}$, corresponding to levels of particles and successive angles of cones.
Let $\varepsilon \in \interoo{0 1},\ M\geq 1$. We define the sequences $(M_n)_{n\geq 0}$ and $(\varepsilon_n)_{n\geq 0}$ by induction:
\[
\left\{
\begin{array}{ll}
	M_0=M\\
	\forall n\geq 0,\ M_{n+1}=M_n\left(1-\dfrac{\varepsilon}{2^{n+1}}\right)^{-1}
\end{array}
\right.\quad \quad
\left\{
\begin{array}{ll}
	\varepsilon_0=\varepsilon \\
	\forall n\geq 0,\ \varepsilon_{n+1}=\varepsilon_n+\dfrac{\varepsilon}{2^n}
\end{array}
\right.
\]
Note that for all $n\geq 0$, we have $\varepsilon\leq \varepsilon_n\leq 2\varepsilon$ and $M\leq M_n <2M$, \comment{for $\varepsilon$ small enough}.
\newline
We now give a stronger version of Theorem \ref{prop: BM}. To avoid any heavy notation, in what follows, we will write $\Bn{n}$ rather than $\mathrm{\textbf{Over}}(M_n,\varepsilon_n,n)$. 
Additionally, we continue to omit writing the floor function $\lfloor\cdot\rfloor$. 
\begin{proposition}
	\label{prop: BMN}
	For all $L>1$, for all $\varepsilon>0$, for all $n\geq 0$, there exists a constant $C_{\varepsilon,n}>0$ such that for all $M\geq 1$,
	\[
	\mathbb{P}\left(\Bn{n}\right)\leq \dfrac{C_{\varepsilon,n}}{M^L}.
	\]
\end{proposition}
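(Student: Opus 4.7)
The plan is to argue by induction on $n$. The base case $n=1$ is immediate: via the random-clock Abelian construction, at the ringing time of source $z$'s unique particle all previously settled sites lie at distinct sources $\neq z$, so the particle settles instantly at $z$. Thus $A_1[\infty]=\mathcal{H}$, every $X_z(1)=0$, and $\mathbb{P}(\Bn{1})=0$, which trivially satisfies any bound $C/M^L$.

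For the induction step, assume the bound holds at level $n$ (with a constant depending on $L,\varepsilon,n$). By the Abelian property, $A_{n+1}[\infty]$ is realizable as $A_n[\infty]$ augmented with one extra particle per source (the \emph{$(n+1)$-th wave}). I would split
\[
\mathbb{P}(\Bn{n+1}) \leq \mathbb{P}(\Bn{n}) + \mathbb{P}(\Bn{n+1}\cap\Bbisn{n}),
\]
absorbing the first term by the induction hypothesis. On the good event $\Bbisn{n}$ we have $A_n[\infty]\cap\mathbb{Z}_{M_n}^c \subset \mathscr{C}_{\varepsilon_n}$; since $M_{n+1}>M_n$ and $\varepsilon_{n+1}>\varepsilon_n$, any occurrence of $\Bn{n+1}$ on this event forces a fresh site $w$ added in the wave with $\|w_\perp\|\geq M_{n+1}$ and $|w_1|>\varepsilon_{n+1}\|w_\perp\|$. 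Union-bounding over the source $z'$ (at level $\rho=\|z'\|$) of the offending particle reduces the problem to bounding, for each $z'$, the probability that its wave-$(n+1)$ particle settles at such a bad $w$.

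The donut method of Proposition~\ref{prop: crossing prob} is the key tool. A preliminary argument asserts that throughout the $(n+1)$-th wave the current aggregate remains trapped inside a slightly enlarged cone $\mathscr{C}_{\varepsilon'}$ for some $\varepsilon'\in(\varepsilon_n,\varepsilon_{n+1})$: the mean number of wave particles per line is just $1$ by Proposition~\ref{prop: avg nb of particles on a line}, whereas the line-wise slack $(\varepsilon_{n+1}-\varepsilon_n)l=\varepsilon l/2^n$ available at level $l\geq M_{n+1}$ is vastly larger; concentration bounds, obtained by refining that first-moment computation via mass-transport together with donut-method tails, provide the required control. Granted this containment, the walk of any wave particle stays in $\mathscr{C}_{\varepsilon'}$ until settlement, so to reach the bad region at some level $l\geq M_{n+1}$ the walk must traverse donuts wrapping $\mathscr{C}_{\varepsilon'}$ from level $\rho$ to level $l$. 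In the inward case $\rho\geq l$, Proposition~\ref{prop: crossing prob} directly yields a bound of the form $(M_{n+1}/\rho)^{K(\varepsilon')|\log(1-c)|}$; the outward case is handled by a dual donut construction or by a Green-function estimate exploiting the thin-cone geometry. Summing over sources ($\asymp\rho^{d-2}$ per level) and over target levels then gives an overall bound $\leq C_{\varepsilon,n+1}M^{-L}$ for the prescribed exponent $L$.

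The main obstacles I foresee are twofold. First, the preliminary containment of the aggregate in $\mathscr{C}_{\varepsilon'}$ throughout the wave requires nontrivial quantitative concentration on the number of wave particles per line, going beyond the mean bound. Second, since $K(\varepsilon)$ is bounded for fixed $\varepsilon$, a single application of the donut method at cone angle $\varepsilon_n$ gives only a bounded decay exponent, whereas the statement demands arbitrary polynomial decay; this will likely require either iterating the donut argument at successively smaller angles—using the available slack in $\varepsilon_{n+1}-\varepsilon_n$ to shrink the donut parameter below $\varepsilon_n$ on a thicker intermediate cone—or a complementary moment estimate on the line thickness $X_0(n)$ leveraged through translation invariance (Proposition~\ref{prop: symmetry invariance}).
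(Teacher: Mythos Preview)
Your induction scaffold and the split $\mathbb{P}(\Bn{n+1}) \leq \mathbb{P}(\Bn{n}) + \mathbb{P}(\Bn{n+1}\cap\Bbisn{n})$ match the paper, but the heart of the induction step is missing two ideas that the paper relies on, and your substitute for them is both vague and essentially circular.

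First, your ``preliminary containment in $\mathscr{C}_{\varepsilon'}$ throughout the wave via concentration bounds'' is the conclusion you are trying to prove. The paper sidesteps this entirely by considering the \emph{first} overflowing particle of the $(n{+}1)$-th wave: if $Z^*$ is its source and $Z_{n+1}$ its settling site (at level $l=\|Z\|\geq M_{n+1}$), then by definition the intermediate aggregate $A_{Z^*}^-$ restricted to $\mathbb{Z}_{M_{n+1}}^c$ is still contained in $\mathscr{C}_{\varepsilon_{n+1}}$. So containment up to the critical moment is automatic; no concentration estimate is needed and no mass-transport refinement enters.

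Second, with this localisation in hand, the paper runs a dichotomy you do not have. One places a ball of radius $r_{n+1}=\tfrac{\varepsilon}{2^{n+1}}l$ around $z_{n+1}$, the radius being chosen so that the ball avoids both $\mathscr{C}_{\varepsilon_n}$ and $\mathbb{Z}_{M_n}$ and therefore contains only wave particles. Either $A_{Z^*}$ is \emph{sparse} in that ball ($\leq b r_{n+1}^d$ sites), in which case the Jerison--Levine--Sheffield \emph{tentacle lemma} (Lemma~\ref{lemma: tentacle}) gives a Gaussian bound $K_0 e^{-c r_{n+1}^2}$; or it is \emph{dense}, in which case a pigeonhole argument bites: since the wave adds at most one particle per source, having $\gtrsim l^d$ wave particles in the ball forces $\|Z^*-Z\|\geq K l^{d/(d-1)}$. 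Only now does the donut method enter, applied inside $\mathscr{C}_{\varepsilon_{n+1}}$ between level $\gtrsim l^{d/(d-1)}$ and level $l$, and the extra factor $l^{1/(d-1)}$ in the ratio is exactly what produces polynomial decay in $l$ after summing over sources. Your inward/outward case split never arises, because the pigeonhole step guarantees the source is far above the target level.

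Finally, your second foreseen obstacle --- that $K(\varepsilon)$ is bounded for fixed $\varepsilon$ --- is resolved in the paper much more simply than by iterating donuts at shrinking angles: one just takes $\varepsilon$ small. Since $\varepsilon_{n+1}\leq 2\varepsilon$, the exponent $C(\varepsilon_{n+1})=\tfrac{\log(1-c)}{2\log(1-2\varepsilon_{n+1})}$ can be made as large as the prescribed $L$ requires; the statement for larger $\varepsilon$ then follows by monotonicity of the overflow event.
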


\begin{proof}[Proof of Theorem \ref{prop: BM}:]
	Let $n$ be fixed. Using the fact that for all $M\geq 1$ and for all $\varepsilon$ small enough, $\varepsilon\leq \varepsilon_n\leq 2\varepsilon$ and $M\leq M_n <2M$, we have $\Over(2M,2\varepsilon,n) \subset \Bn{n}$, hence
	\[
	\mathbb{P}\left(\Over(2M,2\varepsilon,n)\right)\leq \mathbb{P}\left(\Bn{n}\right)\leq \dfrac{C_{\varepsilon,n}}{M^L}.
	\]
\end{proof}

\begin{proof}[Proof of Proposition \ref{prop: BMN}:]
	We prove our result by induction over $n$. Take $L>1$. Our induction statement is the following:
	\[
	\comment{\forall n\geq 0,\ \mathcal{P}(n): \forall \varepsilon\in \interoo{0 1},\ \exists C_{\varepsilon,n}>0, \forall M\geq 1,\ \mathbb{P}\left(\Bn{n}\right)\leq \dfrac{C_{\varepsilon,n}}{M^L}.}
	\]
	When $n = 0$, we have that $A_n[\infty] \overset{\mathrm{a.s}}{=} \emptyset$ and hence $A_n[\infty] \cap \mathbb{Z}_M^c \overset{\mathrm{a.s}}{\subset}\mathscr{C}_{\varepsilon}$, therefore $\mathbb{P}\left(\Bn{n}\right) = 0$.
	Let $n\geq 0$ and suppose $\mathcal{P}(n)$ holds. We write:
	\[
	\mathbb{P}\left(\Bn{n+1}\right)\leq \mathbb{P}\left(\Bn{n+1}\cap \Bbisn{n}\right)+\mathbb{P}\left(\Bn{n}\right).
	\]
	The right-hand term is handled by our induction hypothesis. We now focus on the left-hand term. 
	On the event $\Bn{n+1}\cap \Bbisn{n}$, we have $A_n[\infty] \cap \mathbb{Z}_{M_n}^c \subset \mathscr{C}_{\varepsilon_{n}} $, but when launching one additional particle from each source of $\mathcal{H}$, the new aggregate obtained spills over $\mathscr{C}_{\varepsilon_{n+1}}$ on $\mathbb{Z}_{M_{n+1}}^c$.
	This implies the existence of three random sites $(Z,Z^*,Z_{n+1})\in \mathbb{Z}^d$ such that:
	\begin{itemize}
		\item $Z^*$ is the source from which the first overflowing particle is emitted
		\item $Z_{n+1}$ is the site on which this particle settles
		\item $Z$ is the orthogonal projection of $Z_{n+1}$ on $\mathcal{H}$.
	\end{itemize} 
	Note that the coordinates of $Z_{n+1}$ are given by
	\[
	Z_{n+1}=Z\pm \left(\varepsilon_{n+1}\|Z\|\right)\cdot e_1,
	\]
	where $e_1=(1,0,\ldots, 0)$, and that these coordinates only depend on $Z$ \comment{(up to the sign)}, meaning it suffices to know the location of $Z$ to know precisely where the overflowing particle settled.
	\newline
	Additionally, we call $A_{Z^*}$ the aggregate (restricted to $\mathbb{Z}_{M_{n+1}}^c$) made up of $A_n[\infty]$ and each additional particle sent from $\mathcal{H}$ in the usual order up to site $Z^*$, and $A_{Z^*}^{-}$ the aggregate (restricted to $\mathbb{Z}_{M_{n+1}}^c$) made up of 
	$A_n[\infty]$ and each additional particle sent from $\mathcal{H}$ in the usual order up to site $Z^*$ \emph{excluded}. We know that this aggregate is contained inside of $\mathscr{C}_{\varepsilon_{n+1}}\cap \mathbb{Z}_{M_{n+1}}^c$. 
	Notice that  $A_{Z^*}=A_{Z^*}^{-}\cup\{Z_{n+1}\}$. \newline
	We write:
	\begin{multline}
		\label{eqn: doublesum}
		\mathbb{P}\left(\Bn{n+1}\cap\Bbisn{n}\right) \\
		\leq \sum_{l\geq M_{n+1}}\sum_{\|z\|=l}\mathbb{P}\left(Z=z,\ \Bn{n+1}\cap\Bbisn{n} \right).
	\end{multline}
	Now, fix $l\geq M_{n+1}$ and $z\in \mathcal{H}$ such that $\|z\|=l$, and let $z_{n+1}=z\pm \left(\varepsilon_{n+1}\|z\|\right)\cdot e_1$. To deal with the probability in \eqref{eqn: doublesum}, we consider two cases, which we show are both unlikely. The first case is the case where \comment{many particles have settled in a ball around $z_{n+1}$}, and the second is the case where a thin 'tentacle' has branched out towards $z_{n+1}$. The following lemma is an adaptation of Lemma 2 of \cite{jerison2012logarithmic}, which deals with the case of tentacles:
	\begin{lemma}
		\label{lemma: tentacle} 
		There exist positive universal constants $b,\ K_0,\ c$ such that for all real numbers $r>0$ and all $z\in\mathcal{H}$ with $0\notin \mathring{\mathbb{B}}(z_{n+1},r)$,
		\begin{multline*}
			\mathbb{P}\left(Z=z,\ \Bn{n+1}\cap\Bbisn{n},\ \#\left(A_{Z^*}\cap \mathbb{B}(z_{n+1},r)\right)\leq br^d\right)\\
			\leq K_0e^{-cr^2}.
		\end{multline*}
	\end{lemma}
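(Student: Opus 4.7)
The plan is to follow the strategy of Lemma 2 of \cite{jerison2012logarithmic}, adapted here to the multi-source setting. The intuition is the following: on the event under consideration, the aggregate $A_{Z^*}$ has fewer than $br^d$ particles inside $\mathbb{B}(z_{n+1},r)$, so it can only reach $z_{n+1}$ via a thin ``tentacle.'' For the walk emitted from the source $Z^*$ to settle precisely at $z_{n+1}$, that walk must traverse this tentacle while staying inside the aggregate until its very last step, an event whose probability is exponentially small in $r^{2}$ as soon as $b$ is chosen sufficiently small. The target bound $K_0e^{-cr^2}$ thus has the flavour of a Gaussian survival estimate in a sparse set.

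The first step is to condition on the pre-overflow aggregate $A_{Z^*}^{-}$ and on the identity of the emitting source $Z^{*}$. Given these, the only remaining randomness relevant for $\{Z_{n+1}=z_{n+1}\}$ is the simple random walk started at $Z^{*}$, which is independent of everything else. By the strong Markov property applied at the first entry time into $\mathbb{B}(z_{n+1},r)$, the problem reduces to bounding, uniformly in the sparse set $A:=A_{Z^{*}}^{-}\cap\mathbb{B}(z_{n+1},r)$ with $\#A\leq br^{d}$ and in the starting point $w\in\partial\mathbb{B}(z_{n+1},r)$, the probability that a SRW from $w$ stays in $A$ up to its first exit and that this exit occurs precisely at $z_{n+1}$. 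One then sums this bound over the possible values of $Z^{*}$; since the cone containment $A_n[\infty]\cap\mathbb{Z}_{M_n}^{c}\subset\mathscr{C}_{\varepsilon_n}$ is available on $\Bbisn{n}$, only polynomially many candidate sources remain, and the polynomial factor is harmlessly absorbed into the constant $K_{0}$.

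The core ingredient is therefore a deterministic random-walk estimate: for $b$ small enough, the probability that a SRW from $w\in\partial\mathbb{B}(z_{n+1},r)$ exits a set $A$ with $\#(A\cap\mathbb{B}(z_{n+1},r))\leq br^{d}$ precisely at $z_{n+1}$ is at most $Ke^{-cr^{2}}$. I would establish this by combining two facts. On the one hand, the standard martingale $\|S_{t}-z_{n+1}\|^{2}-t$ together with optional stopping implies that any trajectory going from $w$ to $z_{n+1}$ must spend $\Omega(r^{2})$ steps inside $\mathbb{B}(z_{n+1},r)$ in expectation. On the other hand, I would decompose the walk into excursions between successive returns to an intermediate sphere $\partial\mathbb{B}(z_{n+1},r/2)$; the strong Markov property makes these excursions independent, and a pigeonhole argument using $\#A\leq br^{d}$ guarantees that each excursion has a conditional probability bounded away from $1$ of surviving inside $A$, because the walk cannot spend all its time in locally $A$-dense pockets without eventually visiting an $A$-sparse region from which it escapes with positive probability. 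Combining the $\Omega(r^{2})$ expected occupation time with a Chernoff bound on the number of ``surviving'' excursions then produces the desired factor $e^{-cr^{2}}$; the terminal single step from a neighbour of $z_{n+1}$ (which must belong to $A$) to $z_{n+1}$ itself costs only a bounded factor.

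The main obstacle is precisely this deterministic random-walk estimate, because the sparsity hypothesis is a global count rather than a pointwise density, so one cannot simply bound a per-step survival probability. The excursion decomposition sketched above is what circumvents this: the strong Markov property restores the independence needed to apply concentration, and the global sparsity of $A$ in $\mathbb{B}(z_{n+1},r)$ then yields a uniform lower bound on the per-excursion escape probability, irrespective of the local geometry of $A$. Making this step fully quantitative --- producing a constant $c$ uniform in $A$ and in the starting point $w$ --- is, I expect, the most delicate technical point of the proof.
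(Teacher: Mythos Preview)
The paper does not prove this lemma; it merely introduces it as ``an adaptation of Lemma~2 of \cite{jerison2012logarithmic}'' and uses it as a black box. So there is no paper proof to compare against, only the Jerison--Levine--Sheffield argument itself. Your high-level plan --- condition on the pre-overflow aggregate $A_{Z^*}^{-}$ and the emitting source, then invoke a deterministic ``random walk through a sparse set'' estimate --- is exactly the JLS strategy.

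There is, however, a genuine gap in your reduction step. You write that after bounding the conditional probability for a fixed source one ``sums this bound over the possible values of $Z^{*}$; since the cone containment $A_n[\infty]\cap\mathbb{Z}_{M_n}^{c}\subset\mathscr{C}_{\varepsilon_n}$ is available on $\Bbisn{n}$, only polynomially many candidate sources remain.'' This is wrong: the cone containment on $\Bbisn{n}$ constrains where the \emph{aggregate} lies, not where the \emph{sources} lie. The sources are all of $\mathcal{H}$, and in round $n{+}1$ one additional particle is launched from each of infinitely many sources. Decomposing over $\{Z^{*}=z^{*}\}$ and applying the JLS single-walk bound $Ce^{-cr^{2}}$ termwise produces the series $Ce^{-cr^{2}}\sum_{z^{*}}\mathbb{P}(F_{z^{*}})$ with $F_{z^{*}}=\{\text{no overflow before }z^{*}\}$, i.e.\ $Ce^{-cr^{2}}\cdot\mathbb{E}[T]$ where $T$ is the index of the first overflow; there is no reason for $\mathbb{E}[T]$ to be finite. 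This is precisely the point at which the JLS argument (written for finitely many particles from a single source) does \emph{not} transfer for free. A correct adaptation must supply an extra ingredient to control far-away sources --- for instance, on $\{Z=z,\Bbisn{n}\}$ with $\|Z^{*}\|\gg\|z\|$ the particle from $Z^{*}$ must travel inside $A_{Z^{*}}^{-}\subset\mathscr{C}_{\varepsilon_{n+1}}\cup\mathbb{Z}_{M_{n+1}}$ all the way down to level $\|z\|$, which the donut method (Proposition~\ref{prop: crossing prob}) bounds; the remaining sources within level $O(\|z\|)$ are then polynomially many in $\|z\|$, and since $r\asymp\|z\|$ in the application that polynomial is absorbed into the exponential. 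Your sketch does not contain this argument.

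A secondary remark: your proposed proof of the deterministic random-walk estimate, via excursions between successive returns to $\partial\mathbb{B}(z_{n+1},r/2)$, is not the JLS route and, as written, does not obviously yield the exponent $r^{2}$. The pigeonhole from $\#A\le br^{d}$ is a \emph{global} count and does not give a uniform per-excursion escape probability (an excursion may live entirely in a locally $A$-dense pocket); nor is it clear that the number of such excursions before reaching $z_{n+1}$ is of order $r^{2}$. JLS instead tile $\mathbb{B}(z_{n+1},r)$ by mesoscopic boxes and combine a lattice-animal count with a per-box escape bound, which is what actually produces the Gaussian tail. Your martingale observation about $\Omega(r^{2})$ expected occupation time is correct but is not by itself enough to close the argument.
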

	\begin{figure}[!h]
		\centering
		\begin{subfigure}{.45\textwidth}
			\centering
			\begin{tikzpicture}[use Hobby shortcut,closed=true]
				\fill [color=gray!40] (3.88,5.9).. (3,8).. (-1.5,7.5).. (-1,6.5).. (-2,2).. (-1.7,1).. (-2.2,-1).. (1.4,-1).. (2,1.7).. (3,5).. (3.88,5.9) ;
				\draw (0,-2) -- (0,8);
				\draw (0,6) node[left] {$\|Z\|$};
				\draw (0.75,-2) --(2,8);
				\draw (1.375,-2) -- (4.5,8);
				\filldraw (3.88,6) circle (1pt);
				\draw (4,6) node[right] {$Z_{n+1}$};
				\draw [dashed] (0,6) -- (4,6);
				
				\draw (0,3) node[left] {$M_{n+1}$};
				
				\draw (0.875,-1) arc (0:140:0.5);
				\draw [black] (2,0) arc (0:170:01);
				
				\draw (0.5,-0.5) node[above] {$\varepsilon_n$};
				\draw [black] (1.75,0.8) node[above] {$\varepsilon_{n+1}$};
				
				\draw [<->] (1.8,6.15) -- (3.88,6.15);
				\draw (2.9,6.1) node[above] {\tiny $(\varepsilon_{n+1}-\varepsilon_n)\|Z\|$};
			\end{tikzpicture}
			\caption{Illustration of \\
				$\Bn{n+1}\cap\Bbisn{n}$}
			\label{fig: example}
		\end{subfigure}
		\begin{subfigure}{.45\textwidth}
			\centering
			\begin{tikzpicture}[use Hobby shortcut,closed=true]
				\fill [color=gray!40] (3.88,5.9).. (3.6,6.13).. (2.3,6.2).. (1.7,7).. (-1.5,8).. (-1,6.5).. (-2,2).. (-1.7,1).. (-2.2,-1).. (1.4,-1).. (2,1.7).. (2.5,3.5).. (3,5).. (2.3,5.7).. (3.6,5.85)..(3.88,5.9) ;
				\draw (0,3) node[left] {$M_{n+1}$};
				\draw (0,-2) -- (0,8);
				\draw (0,6) node[left] {$\|Z\|$};
				\draw (0.75,-2) --(2,8);
				\draw (1.375,-2) -- (4.5,8);
				\filldraw (3.88,6) circle (1pt);
				\draw (4,6) node[right] {$Z_{n+1}$};
				\draw [dashed] (0,6) -- (4,6);
				\draw (0.875,-1) arc (0:140:0.5);
				\draw [black] (2,0) arc (0:170:01);
				\draw (0.5,-0.5) node[above] {$\varepsilon_n$};
				\draw [black] (1.75,0.8) node[above] {$\varepsilon_{n+1}$};
			\end{tikzpicture}
			\caption{Case of a tentacle  \\
				reaching out to $Z_{n+1}$}
			\label{fig: tentacle}
		\end{subfigure}
	\end{figure}
	Let us first explain the choice of the radius for the ball centered around $z_{n+1}$. This is where the construction of $(M_n)$ and $(\varepsilon_n)$ comes into play. When building the ball around $z_{n+1}$, we need to take a radius small enough to ensure that our ball does not intersect $\mathscr{C}_{\varepsilon_{n}}$ as well as the strip $\mathbb{Z}_{M_n}:=\mathbb{Z} \times \llbracket -M_n, M_n \rrbracket^{d-1}$, since we need to consider only new particles (particles contributing to $A_{n+1}[\infty]\setminus A_{n}[\infty]$).  
	To do it, we define the radius $r_{n+1} = r_{n+1}(l, \varepsilon)$ as 
	\begin{equation}
		\label{eqn: radius expression}
		r_{n+1}=\frac{\varepsilon_{n+1}-\varepsilon_{n}}{2}\|z\|=\frac{\varepsilon}{2^{n+1}}l.
	\end{equation}
	This choice of $r_{n+1}$ ensures that $\mathbb{B}\left(z_{n+1},r_{n+1}\right) \cap \mathscr{C}_{\varepsilon_{n}}=\emptyset$, since $z_{n+1}$ is necessarily at a distance $\left(\varepsilon_{n+1}-\varepsilon_{n}\right)l$ of $\mathscr{C}_{\varepsilon_{n}}$. Moreover, $\mathbb{B}(z_{n+1},r_{n+1}) \cap \mathbb{Z}_{M_n}=\emptyset$, since $M_{n+1} \leq l$ thus $r_{n+1} \leq \|z\|-M_n$.  
	To deal with \eqref{eqn: doublesum}, we write:
	\begin{align}
		&\mathbb{P}\left(Z=z,\ \Bn{n+1}\cap\Bbisn{n} \right)  \nonumber \\
		\leq &\mathbb{P}\left(Z=z,\ \Bn{n+1}\cap\Bbisn{n},\ \#\left(A_{Z^*}\cap \mathbb{B}(z_{n+1},r_{n+1})\right)\leq br_{n+1}^d\right) \label{eqn: tentacle} \\
		+ &\mathbb{P}\left(Z=z,\ \Bn{n+1}\cap\Bbisn{n},\ \#\left(A_{Z^*}\cap \mathbb{B}(z_{n+1},r_{n+1})\right)>br_{n+1}^d\right) \label{eqn: ball}
	\end{align}
	The term \eqref{eqn: tentacle} is handled by Lemma \ref{lemma: tentacle} with $r=r_{n+1}$ as in \eqref{eqn: radius expression}. This gives:
	\begin{multline}
		\mathbb{P}\left(Z=z,\ \Bn{n+1}\cap\Bbisn{n},\ \#\left(A_{Z^*}\cap \mathbb{B}(z_{n+1},r_{n+1})\right)\leq br_{n+1}^d\right) \label{eqn: JLS}\\
		\leq K_0e^{-c_1l^2},
	\end{multline}
	where $c_1=c_1(n, \varepsilon)=\dfrac{c\varepsilon^2}{4^{n+1}}$.  
	\newline
	\begin{sloppypar}
		To deal with \eqref{eqn: ball}, we use the following argument: in order to have more than ${br_{n+1}^d=b\varepsilon^d 2^{-d(n+1)}\|z\|^d}$ \textit{new} particles gathered in a ball around $z_{n+1}$, and knowing only \emph{one} additional particle is thrown from each site, this implies that 
		$\|Z-Z^*\|\geq K\|z\|^{\frac{d}{d-1}}$ (where \comment{$K = K(\varepsilon, n)$} is a positive constant). Indeed, since $\mathcal{H}$ is a hyperplane of dimension $d-1$, for any $\delta > 0$, the number of sources of $\mathbb{B}(Z, \|z\|^{\delta})\cap \mathcal{H}$ is of order $\|z\|^{\delta (d-1)}$. Each of these sources emits only \textit{one} additional particle, so the total number of particles emitted within $\mathbb{B}(Z, \|z\|^{\delta})\cap \mathcal{H}$ is also of order $\|z\|^{\delta (d-1)}$. In order for this quantity to be of the same order as $br_{n+1}^d$, which is of order $\|z\|^d$, it is hence necessary that $\delta = \frac{d}{d-1}$.
		This gives:
	\end{sloppypar} 
	\begin{align*}
		&\mathbb{P}\left(Z=z,\ \Bn{n+1}\cap\Bbisn{n},\ \#\left(A_{Z^*}\cap \mathbb{B}(z_{n+1},r_{n+1})\right)>br_{n+1}^d\right)\\ 
		\leq & \mathbb{P}\left(Z=z,\ \|Z^*-z\|\geq Kl^{\frac{d}{d-1}},\ \Bn{n+1}\cap\Bbisn{n}\right)\\
		\leq & \mathbb{P}\left(\bigcup_{h\geq Kl^{\frac{d}{d-1}}}\bigcup_{\|z'-z\|=h}\Bigl\{\text{the particle sent from } z' \text{ reaches level } l \text{ while staying within } \mathscr{C}_{\varepsilon_{n+1}}\Bigr\} \right) \\
		\leq & \sum_{h\geq Kl^{\frac{d}{d-1}}}\sum_{{\|z'-z\|}=h}\mathbb{P}\left(\text{the particle sent from } z' \text{ reaches level } l \text{ while staying within } \mathscr{C}_{\varepsilon_{n+1}} \right).	
	\end{align*} 
	Proceeding in the same way as Section \ref{section: donut method}, we can build donuts between levels $h$ and $l$ within the cone $\mathscr{C}_{\varepsilon_{n+1}}$. The previous probability is then handled using a donut argument \comment{as in Proposition \ref{prop: crossing prob}}, and is smaller than $(1-c)^k$, with $c=(2d)^{-2}$ and with $k = k(h,l,\varepsilon_{n+1})$ such that
	\begin{equation}
		\label{eqn: k}
		k\geq\dfrac{-1}{2\log(1-2\varepsilon_{n+1})}\times \log\left(\dfrac{h}{l}\right),
	\end{equation}
	given that $\varepsilon$ is small enough \comment{and $M$ is large enough} (here, we used the fact that $z_{n+1}$ is at the same level as $z$). Therefore,
	\begin{multline*}
		\mathbb{P}\left(Z=z,\ \Bn{n+1}\cap\Bbisn{n},\ \#\left(A_{Z^*}\cap \mathbb{B}(z_{n+1},r_{n+1})\right)>br_{n+1}^d\right)\\ 
		\leq \sum_{h\geq Kl^{\frac{d}{d-1}}}\sum_{\|z'-z\|=h} (1-c)^k.
	\end{multline*} 
	Now, using \eqref{eqn: k}, standard computations yield 
	\[
	\sum_{h\geq Kl^{\frac{d}{d-1}}}\sum_{\|z'-z\|=h} (1-c)^k \leq \comment{K_{d,\varepsilon, n}}l^{d-\frac{C}{d-1}},
	\]
	where $C:=C(\varepsilon_{n+1})=\frac{\log(1-c)}{2\log(1-2\varepsilon_{n+1})}$ can be taken as large as we want (given once again that $\varepsilon$ is small enough) and \comment{$K_{d,\varepsilon, n}$ denotes a positive constant depending only on $d,\ \varepsilon$ and $n$}.
	Combining this with \eqref{eqn: JLS}, we get:
	\begin{multline*}
		\mathbb{P}\left(\Bn{n+1}\cap\Bbisn{n}\right) \\
		\begin{split}
			& \leq\sum_{l\geq M_{n+1}}\sum_{\|z\|=l} K_0 e^{-c_1l^2}
			+\sum_{l\geq M_{n+1}}\sum_{\|z\|=l} \comment{K_{d,\varepsilon, n}}l^{d-\frac{C}{d-1}} \nonumber \\
			&\leq K_d\sum_{l\geq M_{n+1}} l^{d-2}e^{-c_1l^2}
			+\comment{K_{d,\varepsilon, n}}\sum_{l\geq M_{n+1}} l^{d-2}l^{d-\frac{C}{d-1}}. 
		\end{split}
		\label{eqn: final eqn}
	\end{multline*}
	Notice that the first term of the previous sum can be bounded by $Ke^{-\frac{c_1M_{n+1}^2}{2}}$ for some constant $K$. Since $M\leq M_{n+1}$, it is clear that $Ke^{-\frac{c_1M_{n+1}^2}{2}}\leq \frac{C'_{\varepsilon,n}}{M^L}$ for some constant $C'_{\varepsilon,n}>0$.
	To deal with the second term, recall that $C:=C(\varepsilon_{n+1})$ can be taken as large as necessary (by taking $\varepsilon$ sufficiently small). We can therefore choose $\varepsilon$ small enough such that:
	\begin{equation*}
		\label{eqn= finite sum 2}
		\sum_{l\geq M_{n+1}}l^{2d-2-\frac{C}{d-1}}\leq \dfrac{C''_{\varepsilon,n}}{M^L}.
	\end{equation*}
	Recall that from our induction hypothesis, $\mathbb{P}\left(\Bn{n}\right)\leq \dfrac{C_{\varepsilon,n}}{M^L}$. 
	This implies
	\begin{multline*}
		\begin{split}
			\mathbb{P}\left(\Bn{n+1}\right)&\leq \mathbb{P}\left(\Bn{n+1}\cap \Bbisn{n}\right)+\mathbb{P}\left(\Bn{n}\right)\\
			&\leq \dfrac{C'_{\varepsilon,n}}{M^L} + \dfrac{C''_{\varepsilon,n}}{M^L} + \dfrac{C_{\varepsilon,n}}{M^L} \\
		\end{split}
	\end{multline*}
	and concludes the proof of Proposition \ref{prop: BMN}. 
\end{proof}

\section{Strong stabilization}
\label{section: strong stab}
This section is devoted to the proof of Theorem \ref{thm: strong stab} which heavily lies on the donut method and the global upper bound (see Sections \ref{section: donut method} and \ref{section: global upper bound}).

Fix an integer $\alpha \geq 2$. 
For $M\geq 1,\ j\geq 0$, recall the definition of : 
\[
\mathrm{Ann}(M,j) := \mathcal{H} \cap \big( \mathbb{B}((j+2)M^\alpha) \setminus \mathbb{B}((j+1)M^\alpha ) \big), \; j \geq 0,
\] 
and let $E_{M,j}$ be the following event:
\[
E_{M,j} = \left\lbrace\begin{array}{c}
	\mbox{ At least one of the particles starting from $\mathrm{Ann}(M,j)$  }\\
	\mbox{ visits the strip $\mathbb{Z}_{M}$ before exiting the current aggregate}\\
\end{array}
\right\rbrace .
\]
According to the Borel-Cantelli lemma, it is sufficient to show that 
\[
\sum_{M\geq 1}\mathbb{P}\left(\bigcup_{j\geq 0}E_{M,j}\right)<+\infty.
\]
To do it, we write: 
\[
\mathbb{P}\left(\bigcup_{j\geq 0}E_{M,j}\right)\leq \sum_{j\geq 0}\mathbb{P}\left(E_{M,j} \cap \Bbis{n} \right) + \mathbb{P}\left(\B{n}\right).
\]
We focus on the left-hand term. 
\comment{To deal with it}, let $j\geq 0$, take $l=M^{\alpha}(j+1)$ and take $k = k(l,M,\varepsilon)$ as in \eqref{eqn: value of k}. Define $N_{tot}=N_{tot}(n,M,j)$ as the total number of particles sent from $\mathrm{Ann}(M,j)$. We have:
\begin{multline*}
	\mathbb{P}\left(E_{M,j} \cap \Bbis{n}\right)\\
	\begin{split}
		&=\mathbb{P}\left(\bigcup_{i=1}^{N_{tot}}\Bigl\{\text{particle }i \text{ visits } \mathbb{Z}_{M} \text{ before exiting the aggregate}\Bigr\}\cap \Bbis{n}\right)\\
		&\leq \sum_{i=1}^{N_{tot}}\mathbb{P}\left(\Bigl\{\text{particle } i \text{ visits } \mathbb{Z}_{M} \text{ before exiting the aggregate} \Bigr\}\cap \Bbis{n}\right)\\
		&\leq \sum_{i=1}^{N_{tot}}\mathbb{P}\left(\Bigl\{\text{particle } i \text{ crosses } k \text{ donuts before exiting the aggregate} \Bigr\}\cap \Bbis{n}\right)\\
		&\leq \sum_{i=1}^{N_{tot}}\mathbb{P}\left(\Bigl\{\text{the \emph{walk} associated with particle } i \text{ crosses } k \text{ donuts before exiting } \mathscr{C}_{\varepsilon}\cap \mathbb{Z}_M^c \Bigr\}\right)\\
		&\leq N_{tot}(1-c)^k,\\
	\end{split}
\end{multline*}
where the last line comes from Proposition \ref{prop: crossing prob}.
\newline
Notice here that the global upper bound $\Bbis{n}$ is used to deduce two different arguments. The first time, it allows us to say that if a particle reaches $\mathbb{Z}_M$ before exiting the aggregate, then it necessarily crosses the $k$ donuts $\textbf{D}^0,\dots,\ \textbf{D}^{k-1}$ before exiting the aggregate, since the aggregate is contained within the cone, which itself is contained within the union of the donuts. 
The second time, it allows us to say that if a particle crosses $k$ donuts without exiting the aggregate, then in particular, it does so without exiting the cone, and the same is true for its associated walk.
Now, using \eqref{eqn: value of k} with $l=M^{\alpha}(j+1)$, we get:
\begingroup
\allowdisplaybreaks
\begin{align*}
	N_{tot}(1-c)^k & \leq N_{tot}\exp\left(K(\varepsilon) \log\left(\dfrac{M^{\alpha}(j+1)}{M}\right)\log(1-c)\right)\\
	&\leq N_{tot}M^{C(1-\alpha)}(j+1)^{-C},
\end{align*}
\endgroup
where $C:=-K(\varepsilon)\log(1-c)=\dfrac{\log(1-c)}{2\log(1-2\varepsilon)}$ can be taken arbitrarily large, by taking $\varepsilon$ arbitrarily small.
\newline
Since $N_{tot}\leq K_dM^{\alpha (d-1)}{j^{d-2}n}$, we have: 
\begin{multline*}
	\sum_{M\geq 1}\mathbb{P}\left(\bigcup_{j\geq 0}E_{M,j}\right)\\
	\begin{split} &\leq	\sum_{M\geq 1}\sum_{j\geq 0}\mathbb{P}\left(E_{M,j}\cap \Bbis{n}\right)+\sum_{M\geq 1}\mathbb{P}\left(\B{n}\right)\\
		&\leq K_dn\sum_{M\geq 1}M^{C(1-\alpha)+\alpha (d-1)}\sum_{j\geq 0}{j^{d-2}}(j+1)^{-C}+\sum_{M\geq 1}\mathbb{P}\left(\B{n}\right).
	\end{split}\\
\end{multline*}
The left hand-term of the sum is finite since $\alpha>1$ and since we can pick $C=C(\varepsilon)$ sufficiently large, while the second term is handled using Theorem \ref{prop: BM}. This concludes the proof of Theorem \ref{thm: strong stab}.

\section{Shape theorem}
\label{section: shape thm}

In this section, we prove Theorem \ref{thm: shape} following the same strategy as \cite{asselah2013sublogarithmic, asselah2019outer} and \cite{chenavier2023bi}, by splitting the proof into two parts: the lower bound and the upper bound. We begin by showing the lower bound, as we will be using it later for the proof of the upper bound. 

Let us recall that, for any real number $x>0$, the slab $\mathcal{R}_x$ and the strip $\mathbb{Z}_x$ are defined as \[\mathcal{R}_x = \llbracket -\lfloor x \rfloor , \lfloor x \rfloor \rrbracket \times \mathbb{Z}^{d-1}\]  and \[\mathbb{Z}_x = \mathbb{Z} \times \llbracket -\lfloor x \rfloor , \lfloor x \rfloor \rrbracket^{d-1}.\]

\subsection{Proof for the lower bound}
\label{subsection: the lower bound}

In this section, we show the lower bound of Theorem \ref{thm: shape}, which is the following result: for any integers $d\geq 3$ and any $\alpha\geq 1$, there exists a constant $C = C(d,\alpha)>0$ such that, almost surely, there exists $N\geq 1$ such that for any $n\geq N$,
\[
\mathcal{R}_{n/2-C\sqrt{\log n}}\cap \mathbb{Z}_{n^\alpha}\subset \An \cap \mathbb{Z}_{n^\alpha}.
\]
We adopt in this section the notation of \cite{asselah2013logarithmic,asselah2013sublogarithmic,chenavier2023bi} and denote by $A(\eta)$ the aggregate generated by an initial configuration $\eta$. Since we are launching $n$ particles from each site of $\mathcal{H}$, we will mostly be using the notation $A(n\mathbbm{1}_{\mathcal{H}})$ to refer to $\An$. 

For $k\in\mathbb{N}$, we define the shell $S_k$ by
\[
S_k=\left(\mathcal{R}_{(k+1)\sqrt{\log n}}\setminus \mathcal{R}_{k\sqrt{\log n}}\right)\cap \mathbb{Z}_{n^\alpha}.
\] 
We let
\[
\partial \mathcal{R}_{k\sqrt{\log n}}=\{ -\lfloor k\sqrt{\log n} \rfloor , \lfloor k\sqrt{\log n} \rfloor \}\times \mathbb{Z}^{d-1}.
\]
and write 
\[
\partial_{k,n}=\partial\Rk \cap \mathbb{Z}_{n^{\alpha}}.
\]
Now, for $z \in \partial \mathcal{R}_{k\sqrt{\log n}}$ we define the \emph{tile} and \emph{cell} centered in $z$ as
\[
\tau(z)=\mathbb{B}\left(z,\frac{\sqrt{\log n}}{2}\right) \cap \partial \mathcal{R}_{k\sqrt{\log n}} \quad \text{ and } \quad \mathcal{C}(z)=\mathbb{B}\left(z,\sqrt{\log n }\right)\cap \mathcal{R}_{k\sqrt{\log n}}^c.
\]

The strategy to prove the lower bound is to show that each tile of $\Rk$ is likely visited by many particles, and then show that if many particles reached a tile, they are likely to fill up the corresponding cell. The idea here is similar to that of a floodgate. Each tile $\tau$ can be seen as a floodgate, and the particles as water. We stop the particles once they reach $\tau$, and let them accumulate on the tile, just like a floodgate would store water. Then, when there is a sufficient number of particles accumulated on $\tau$, we release the particles and show that they are likely to fill up the corresponding cell. This is the same as opening the floodgates and letting the water run free again. 
For some configuration $\eta$ and $B\subset \Rk$, we will denote by:
\begin{itemize}
	\item 	$W_{k\sqrt{\log n}}(\eta,B)$ the number of particles with initial configuration $\eta$ that hit set $B$ before or when exiting $\Rk$ \comment{(and before leaving the aggregate)}.
	\item $M_{k\sqrt{\log n}}(\eta,B)$ the number of random walks with initial configuration $\eta$ that hit set $B$ before or when exiting $\Rk$.\newline
\end{itemize}
We say that set $B$ is not covered if $B\nsubset A(n\mathbbm{1}_{\mathcal{H}})$. It is sufficient to show that there exists a constant $C$ such that for all $L>1,\ n\geq 1$ and $k\leq \frac{n}{2\sqrt{\log n}}-C$, we have 
\begin{equation}
	\mathbb{P}\left(S_k \text{ is not covered }\right) \leq \frac{c}{n^L}.
\end{equation}
\comment{This implies that
	\[
	\sum_{n\geq 1}\mathbb{P}\left(S_k \text{ is not covered }\right)\leq \sum_{n\geq 1}\frac{c}{n^L}<+\infty,
	\]
	which, combined with the Borel-Cantelli lemma, suffices to prove the lower bound.}
Now, let
\[
\mathcal{T}_{k\sqrt{\log n}}=\{\tau(z),\ z \in \partial_{k,n}\},
\]
and for a tile $\tau=\tau(z) \in \mathcal{T}_{k\sqrt{\log n}}$,
\[
\mu(\tau)=\mathbb{E}\left[\Mk(n\mathbbm{1}_\mathcal{H},\tau)\right]-\mathbb{E}\left[\Mk\left(\Rk\setminus\mathcal{Z},\tau\right)\right],
\]
where $\mathcal{Z}$ is the following set:
\[
\mathcal{Z}=\{z'\in \Rk: d(z',\tau(z))\leq b\sqrt{\log n}\}
\] 
for some $b>0$ which will be chosen later, and with $d(x,A)= \displaystyle{\inf_{y\in A}} \|x-y\|$. Now, fix $C>0$. For $k\leq \frac{n}{2\sqrt{\log n}}-C$, we write:
\begin{equation}
	\label{eqn: few tile}
	\mathbb{P}\left(S_k \text{ is not covered } \right) \leq  \mathbb{P}\left(\exists\tau\in\Tauk,\ \Wk(n\mathbbm{1}_\mathcal{H},\tau) <\frac{1}{3}\mu(\tau)\right)
\end{equation}
\begin{equation}
	\label{eqn: many tile}
	+ \mathbb{P}\left(\forall\tau\in\Tauk,\ \Wk(n\mathbbm{1}_\mathcal{H},\tau)\geq\frac{1}{3}\mu(\tau),\ S_k \text{ is not covered }\right) \comment{.}
\end{equation}

The second term of expression \eqref{eqn: many tile} will be handled later in Section \ref{subsubsection: handling}. To handle \eqref{eqn: few tile}, essentially, we must control the probability that few particles hit a tile. This is achieved using Lemma \ref{lemma: 6.2} below. Notice that working on strip $\mathbb{Z}_{n^{\alpha}}$ allows us to use a union bound on $\mathbb{P}\left(\exists\tau\in\Tauk,\ \Wk(n\mathbbm{1}_\mathcal{H},\tau) <\frac{1}{3}\mu(\tau)\right)$, since 
\[
\#\Tauk \leq 2(2n^{\alpha}+1)^{d-1}\leq Kn^{\alpha(d-1)}.
\]
It is sufficient to prove \comment{that for any tile} $\tau \in \Tauk$,
\comment{
	\begin{equation}
		\label{eqn: few particles}
		\mathbb{P}\left(\Wk(n\mathbbm{1}_\mathcal{H},\tau) <\frac{1}{3}\mu(\tau)\right)\leq \exp\left( -\kappa C^2\log(n) \right).
	\end{equation}
}
The above inequality is a consequence of the following lemma (Lemma 2.5 of \cite{asselah2013sublogarithmic}):
\begin{lemma} 
	\label{lemma: 6.2}
	Suppose that a sequence of random variables $\{W_n, M_n,L_n,\widetilde{M}_n; n\geq 0\}$ and a sequence of real numbers $(c_n)_{n\geq 0}$ satisfy for any $n\geq 0$:
	\begin{equation*}
		W_n+L_n+c_n\geq \widetilde{M}_n\qquad\mbox{and}\qquad \widetilde{M}_n\overset{\mathrm{law}}{=}M_n.
	\end{equation*}
	Assume that $W_n$ and $L_n$ are independent and that $L_n$ and $M_n$ are both sums of independent Bernoulli random variables with finite first moment. Assume also that 
	\begin{enumerate}
		\item[\bf (H1)]\label{H1} the Bernoulli variables $\{Y_1^{(n)},\ Y_{2}^{(n)}, \dots\}$ whose series is $L_n$ satisfy for some ${\comment{\eta}>1}$:
		\[
		\underset{n}{\sup}~\underset{i}{\sup}\,\mathbb{E}\left[{Y_i^{(n)}}\right]< \comment{\dfrac{\eta -1}{\eta}};
		\]
		\item[\bf (H2)]\label{H2} $\mu_n=\mathbb{E}\left[{M_n-L_n}\right]\geq 0$.
	\end{enumerate}
	Then, for any $n\geq 0$ and $\xi_n\in\R$, we have for any $\lambda\geq 0$,
	\begin{equation*}
		\mathbb{P}\left({W_n<\xi_n}\right)\leq \exp\left(-\lambda\left(\mu_n-\xi_n-c_n\right)+\frac{\lambda^2}{2}\left(\mu_n+\comment{\eta}\sum_{i=1}^{\infty}\mathbb{E}\left[Y_i^{(n)}\right]^2\right)\right).
	\end{equation*}
\end{lemma}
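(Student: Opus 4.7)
The plan is a standard exponential Chernoff-type argument that exploits the almost-sure inequality $W_n + L_n + c_n \geq \widetilde{M}_n$ together with the independence of $W_n$ and $L_n$. First I would apply the exponential Markov inequality: for any $\lambda \geq 0$,
\[
\mathbb{P}(W_n < \xi_n) \leq e^{\lambda \xi_n}\,\mathbb{E}\bigl[e^{-\lambda W_n}\bigr].
\]
The whole game is then to upper bound $\mathbb{E}[e^{-\lambda W_n}]$ in a form producing the required $\mu_n$ and $\sum \mathbb{E}[Y_i^{(n)}]^2$ terms.

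Next, multiplying the almost-sure inequality in exponent form by $e^{-\lambda L_n}$ and taking expectations, the independence of $W_n$ and $L_n$ causes the left-hand side to factorize, which gives
\[
\mathbb{E}\bigl[e^{-\lambda W_n}\bigr]\cdot \mathbb{E}\bigl[e^{-\lambda L_n}\bigr] \leq e^{\lambda c_n}\,\mathbb{E}\bigl[e^{-\lambda \widetilde{M}_n}\bigr] = e^{\lambda c_n}\,\mathbb{E}\bigl[e^{-\lambda M_n}\bigr],
\]
the last equality coming from $\widetilde{M}_n \overset{law}{=} M_n$. It therefore remains to upper bound $\mathbb{E}[e^{-\lambda M_n}]$ and lower bound $\mathbb{E}[e^{-\lambda L_n}]$, and to combine the two estimates.

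The third step handles these two Bernoulli moment-generating functions. Writing $q_j := \mathbb{E}[Z_j^{(n)}]$ for the Bernoulli summands composing $M_n$, the elementary inequality $\log(1-x) \leq -x$ gives
\[
\log \mathbb{E}\bigl[e^{-\lambda M_n}\bigr] = \sum_j \log\bigl(1 - q_j(1-e^{-\lambda})\bigr) \leq -(1-e^{-\lambda})\,\mathbb{E}[M_n].
\]
Setting $t := 1-e^{-\lambda} \in [0,1)$ and using $\log(1-x) \geq -x - \tfrac{x^2}{2(1-x)}$, one gets, for the Bernoulli summands $Y_i^{(n)}$ of $L_n$ with means $p_i$,
\[
\log(1 - p_i t) \geq -p_i t - \frac{\kappa}{2}\,p_i^2\, t^2,
\]
where condition (H1) is used precisely to bound $1/(1-p_i t) \leq 1/(1-p_i) < \kappa$; this is where $\kappa$ enters. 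Summation yields
\[
-\log \mathbb{E}\bigl[e^{-\lambda L_n}\bigr] \leq t\,\mathbb{E}[L_n] + \frac{\kappa t^2}{2}\sum_{i=1}^{\infty} \mathbb{E}\bigl[Y_i^{(n)}\bigr]^2.
\]
This lower bound is the main obstacle: without (H1) one cannot trade the troublesome $1/(1-p_i)$ factor for a universal constant, and the $\kappa\sum p_i^2$ in the statement would be lost.

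Finally I would combine everything. Inserting the two MGF estimates into the factorized inequality of step two gives
\[
\log \mathbb{E}\bigl[e^{-\lambda W_n}\bigr] \leq \lambda c_n - t\,\mu_n + \frac{\kappa t^2}{2}\sum_{i=1}^{\infty} \mathbb{E}\bigl[Y_i^{(n)}\bigr]^2.
\]
Using $t \leq \lambda$ on the quadratic term and $t \geq \lambda - \lambda^2/2$ combined with (H2) (which is needed so that the inequality $-t\mu_n \leq -\lambda \mu_n + \tfrac{\lambda^2}{2}\mu_n$ goes in the right direction) on the linear term, the two contributions of order $\lambda^2/2$ assemble into exactly $\mu_n + \kappa \sum \mathbb{E}[Y_i^{(n)}]^2$. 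Plugging this back into the Chernoff bound of the first step produces the announced inequality; the delicate point is isolated in step three, the rest is bookkeeping.
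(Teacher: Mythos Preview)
Your argument is correct and is essentially the standard Chernoff/Cram\'er route that Asselah and Gaudilli\`ere themselves use in \cite{asselah2013sublogarithmic}; note that the present paper does not give its own proof of this lemma but simply quotes it as Lemma~2.5 of that reference. Every step you outline---the exponential Markov inequality, the factorization via independence, the one-sided log-MGF bounds on Bernoulli sums, and the passage from $t=1-e^{-\lambda}$ back to $\lambda$ using $t\le\lambda$ and $t\ge\lambda-\lambda^2/2$ together with (H2)---matches the original proof, so there is nothing to add.
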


We first need to check that both hypotheses of Lemma \ref{lemma: 6.2} are satisfied. To do so, we use a similar strategy of \cite{lawler1992internal} and \cite{asselah2013sublogarithmic} to stochastically dominate the variable $\Mk(n\mathbbm{1}_{\mathcal{H}},\tau)$.
We have:
\begin{equation}
	\label{eqn: stoch eq}
	\Wk(\init, \tau) + \Mk\left(A_{k\sqrt{\log n}}(\init),\tau\right) \overset{\mathrm{law}}{=} \Mk(\init,\tau),
\end{equation}
where $A_{k\sqrt{\log n}}(\init)=A(\init)\cap\Rk$.

We explain the general idea behind \eqref{eqn: stoch eq}. Consider a random walk starting on some site of $\mathcal{H}$ and hitting $\tau$ when exiting $\Rk$. Such a walk is accounted for in $\Mk$. Now, it is possible for the particle associated to that random walk to also hit $\tau$ when exiting $\Rk$. In that case, it is accounted for in $\Wk$. If, however, it settles beforehand, say on some site $y \in \Rk$, then we can find a coupling such that the trajectory of the walk starting after the particle has settled is the same as the trajectory of a random walk started on $y$ hitting $\tau$ when exiting $\Rk$. Such a term is accounted for in $\Mk\left(A_{k\sqrt{\log n}}(\init),\tau\right)$. This stochastic equality will be of use when applying Lemma \ref{lemma: 6.2}, using it with $M_n=\Mk(\init,\tau)$ and 
\[
\widetilde{M}_n=\widetilde{M}_{k\sqrt{\log n}}(\init,\tau):=\Wk(\init, \tau) + \Mk\left(A_{k\sqrt{\log n}}(\init),\tau\right).
\]
Now, simply using the fact that $A_{k\sqrt{\log n}}(\init)\subset \Rk$, we have:
\begin{equation}
	\label{eqn: stoch dom}
	\MMk(\init,\tau) \leq \Wk(\init, \tau) + \Mk\left(\Rk,\tau\right) \quad \mathrm{a.s.}
\end{equation}
Now, we let $c_n=\#\mathcal{Z}\leq c\left(\sqrt{\log n}\right)^d$, \comment{where $c=c(d, b)>0$}. Using \eqref{eqn: stoch dom} gives:
\[
\MMk(\init,\tau)\leq \Wk(\init, \tau) + \Mk\left(\Rk\setminus\mathcal{Z},\tau\right)+ c_n.
\]
Note that this inequality is similar to the one in Lemma \ref{lemma: 6.2}. The idea will be to apply Lemma \ref{lemma: 6.2} with
\[
\begin{cases}
	W_n=\Wk(\init,\tau) \\
	M_n=\Mk(\init,\tau) \\
	\tilde{M}_n = \MMk(\init,\tau) \\
	L_n= \Mk\left(\Rk\setminus\mathcal{Z},\tau\right)\\
\end{cases}
\]
Note that both $L_n$ and $M_n$ can be written as sums of independent Bernoulli random variables, as:
\[
L_n=\sum_{y\in \Rk\setminus\mathcal{Z} } \mathbbm{1}^y_{S(H_{k\sqrt{\log n}})\in \tau} \quad \mathrm{ and } \quad
M_n=\sum_{y\in n\mathbbm{1}_{\mathcal{H}}} \mathbbm{1}^y_{S(H_{k\sqrt{\log n}})\in \tau},
\]
where the indicators $\mathbbm{1}^y$ correspond to independent simple symmetric random walks beginning at $y$, and $H_{k\sqrt{\log n}}$ denotes the hitting time of $\mathcal{R}_{k\sqrt{\log n}}$.
Before applying Lemma \ref{lemma: 6.2}, we must ensure that hypotheses $\textbf{(H1)}$ and $\textbf{(H2)}$ hold. This is done in the next subsection.
\subsubsection{Checking hypotheses \textbf{(H1)} and \textbf{(H2)}}
\label{subsubsection: checking hypotheses H1 and H2}
Let us begin by giving the following lemma, which ensures hypothesis $\textbf{(H1)}$ of Lemma \ref{lemma: 6.2} for some $b>0$ in the definition of $\mathcal{Z}$. This lemma is analogous to Lemma 5.1 of \cite{asselah2013logarithmic}, and is adapted to the case of \emph{slabs}. We omit its proof.
\begin{lemma}
	\label{lemma 6.3}
	There exists a positive constant $\kappa_0>0$ such that for all $r>0$, for any $y\in \mathcal{R}_r$ and $x\in\partial \mathcal{R}_r\setminus\{y\}$, we have	
	\[  
	\mathbb{P}_y\left(S(H_r)=x\right)\leq \frac{\kappa_0}{\|x-y\|^{d-1}},
	\]
	where $H_r$ denotes the hitting time of $\partial \mathcal{R}_r$ for the simple random walk $(S(t))_{t\geq 0}$.
\end{lemma}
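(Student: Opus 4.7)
The plan is to bound the slab exit distribution by the hitting distribution of a single hyperplane and then invoke the classical discrete Poisson-kernel estimate on a half-space. By symmetry, assume $x\in\{\lfloor r\rfloor\}\times\mathbb{Z}^{d-1}$ (the opposite face of $\partial R_r$ is handled identically). The case $|y_1|=\lfloor r\rfloor$ is trivial because then $H_r=0$ almost surely and, since $x\neq y$, both sides of the claimed inequality vanish. Assume therefore $|y_1|\leq \lfloor r\rfloor-1$ and introduce
\[
\tau:=\inf\{t\geq 0:\ S(t)_1=\lfloor r\rfloor\}.
\]
The first coordinate $(S(t)_1)_{t\geq 0}$ is a lazy simple random walk on $\mathbb{Z}$, hence recurrent, so $\tau<+\infty$ almost surely. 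For the walk to exit $R_r$ at $x$ it is necessary that its first visit to the hyperplane $\{\lfloor r\rfloor\}\times\mathbb{Z}^{d-1}$ occurs at $x$, whence
\[
\mathbb{P}_y\bigl(S(H_r)=x\bigr)\ \leq\ \mathbb{P}_y\bigl(S(\tau)=x\bigr).
\]

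The key input is the standard sharp bound on the discrete Poisson kernel of the half-space $\mathbb{H}:=\{z\in\mathbb{Z}^d:\ z_1\leq \lfloor r\rfloor-1\}$: there exists a universal constant $\kappa_1>0$ such that, for every $y\in\mathbb{H}$ and every $x$ on the boundary hyperplane,
\[
\mathbb{P}_y\bigl(S(\tau)=x\bigr)\ \leq\ \frac{\kappa_1\,(\lfloor r\rfloor-y_1)}{\|x-y\|^d}.
\]
This estimate can be found, e.g., in Lawler's \emph{Intersections of Random Walks}; the usual proof combines the reflection principle with the sharp Newton-potential asymptotic $G(z)\asymp\|z\|^{2-d}$ for the Green function of the walk on $\mathbb{Z}^d$, valid because $d\geq 3$.

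The conclusion follows from a one-line geometric observation: since $x_1=\lfloor r\rfloor$ and $y_1\leq\lfloor r\rfloor$, one has
\[
\lfloor r\rfloor-y_1\ =\ |x_1-y_1|\ \leq\ \|x-y\|,
\]
which, plugged into the previous display, gives
\[
\mathbb{P}_y\bigl(S(H_r)=x\bigr)\ \leq\ \frac{\kappa_1}{\|x-y\|^{d-1}},
\]
proving the lemma with $\kappa_0:=\kappa_1$. The only genuine difficulty in this programme is the discrete half-space Poisson-kernel bound itself; the slab-to-half-space reduction and the final geometric step are routine. Since the paper explicitly omits the proof and it is entirely classical, the remaining work would just be a careful citation and, if desired, a self-contained derivation via the method of images and the Green function estimate for $\mathbb{Z}^d$.
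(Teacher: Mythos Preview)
Your argument is correct. The paper does not actually prove this lemma: it states that the result is analogous to Lemma~5.1 of Asselah--Gaudilli\`ere \cite{asselah2013logarithmic}, adapted from balls to slabs, and explicitly omits the proof. Your half-space reduction together with the classical discrete Poisson-kernel estimate is exactly the standard route one would take to justify that citation, so there is no meaningful comparison to make. One small slip of wording: in the boundary case $|y_1|=\lfloor r\rfloor$ you say ``both sides of the claimed inequality vanish,'' but only the left-hand side does; the right-hand side $\kappa_0/\|x-y\|^{d-1}$ is positive since $x\neq y$, so the inequality holds trivially rather than as an equality.
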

This ensures $\textbf{(H1)}$, since for all $y\in \Rk\setminus\mathcal{Z}$,
\begin{align*}
	\mathbb{E}\left[\mathbbm{1}^y_{S(H_{k\sqrt{\log n}})\in \tau}\right]&= \sum_{x\in\tau} \mathbb{P}_y\left(S(H_{k\sqrt{\log n}})\comment{=x}\right)\\
	&\leq \comment{\sum_{x \in \tau}} \dfrac{\kappa_0}{\|x-y\|^{d-1}}\\
	&\leq \#\tau \dfrac{\kappa_0}{(b\sqrt{\log n})^{d-1}} \\
	&\leq \dfrac{c}{b^{d-1}},
\end{align*}
where the last line comes from the fact that $\#\tau$ is of order $(\sqrt{\log n})^{d-1}$. Thus taking $b$ large enough in the definition of $\mathcal{Z}$ ensures $\textbf{(H1)}$.
We now show that hypothesis $\textbf{(H2)}$ holds as well. We show that if $\tau$ is a tile at a distance $C\sqrt{\log n}$ of $\partial \mathcal{R}_{n/2}$, then for some positive constant $\kappa>0$, we have 
\begin{equation}
	\label{eqn: lower bound mu}
	\mu(\tau)\geq \kappa C\log (n)^{d/2}.
\end{equation}
We also \comment{show} the following:
\begin{equation}
	\label{eqn: E Yi}
	\sum_{y\in\Rk \setminus\mathcal{Z}}\mathbb{P}_y\left(S(H_{k\sqrt{\log n}})\in\tau\right)^2\leq c\log(n)^{d-1}.
\end{equation}
The proofs of \eqref{eqn: lower bound mu} and \eqref{eqn: E Yi} are given in Section \ref{sec: shape thm proofs}.
\subsubsection{Application of Lemma \ref{lemma: 6.2}}
\label{subsubsection: application of lemma}
We now have all the tools in hand to prove \comment{\eqref{eqn: few particles}}. To do so, we use the previous results and Lemma \ref{lemma: 6.2}. \comment{Notice that $C=C(b)$ can be taken large enough such that \eqref{eqn: lower bound mu} gives $\mu(\tau) \geq 3c_n$.} Therefore, applying Lemma \ref{lemma: 6.2} gives for all $\lambda\geq 0$:
\begin{equation}
	\label{eqn: optimization}
	\mathbb{P}\left(\Wk(\init,\tau)<\frac{1}{3}\mu(\tau)\right)\leq \exp\left( -\frac{\lambda}{3}\mu(\tau) +\frac{\lambda^2}{2}\left(\mu(\tau)+\comment{\eta} c\log (n)^{d-1}\right) \right).
\end{equation}
After optimizing in $\lambda$, and using that $\mu(\tau)\geq \kappa C\log (n)^{d/2}$ we get that 
\[
\mathbb{P}\left(\Wk(\init,\tau)<\frac{1}{3}\mu(\tau)\right)\leq \exp\left( \comment{-\kappa' C^2\log(n)} \right),
\]
\comment{for some constant $\kappa' >0$.}
Therefore, 
\begin{align*}
	\mathbb{P}\left(\exists\tau\in\Tauk,\ \Wk(n\mathbbm{1}_\mathcal{H},\tau) <\frac{1}{3}\mu(\tau)\right)&\leq \#\Tauk \exp\left( \comment{-\kappa' C^2\log(n)}\right) \\
	&\leq Kn^{\alpha(d-1)}\exp\left( \comment{-\kappa' C^2\log(n)}\right),
\end{align*}
which for $C$ large enough decreases faster than any power of $n^{-1}$.
The proof of the optimization is given in Section \ref{sec: lower bound proofs}.
\subsubsection{Handling \eqref{eqn: many tile}}
\label{subsubsection: handling}
We now focus on giving an upper bound of \eqref{eqn: many tile}. The idea here is that if many particles hit a tile, they are very likely to fill the corresponding cell. Note that we have the following inclusion
\[
S_k\subset \bigcup_{z\in\partial_{k,n}} \mathcal{C}(z),
\]
\comment{where $\partial_{k,n}=\partial\Rk \cap \mathbb{Z}_{n^{\alpha}}$.}
Now, using the previous inclusion and the fact that $\mu(\tau)\geq \kappa C\log (n)^{d/2}$, we get
\begin{multline*}
	\mathbb{P}\left(\forall\tau\in\Tauk,\ \Wk(n\mathbbm{1}_\mathcal{H},\tau)\geq\frac{1}{3}\mu(\tau),\ S_k \text{ is not covered }\right) \\
	\begin{split}
		&\leq  \mathbb{P}\left(\bigcup_{z\in\partial_{k,n}} \mathcal{C}(z) \text{ is not covered } ,\ \forall\tau\in\Tauk,\ \Wk(n\mathbbm{1}_\mathcal{H},\tau)\geq\frac{\kappa C}{3}\log (n)^{d/2}\right) \\
		&\leq \sum_{z\in\partial_{k,n}}\mathbb{P}\left(\mathcal{C}(z) \text{ is not covered } \;\middle|\; \forall\tau\in\Tauk,\ \Wk(n\mathbbm{1}_\mathcal{H},\tau)\geq\frac{\kappa C}{3}\log (n)^{d/2} \right).
	\end{split}
\end{multline*}
The following lemma shows that if many particles are initially contained inside a ball of radius $R/2$, they have a high probability of filling the ball of radius $R$. \comment{(See Lemma 1.3 of \cite{asselah2013sublogarithmic}, which describes the idea mentioned above of floodgates being open and filling up a given area).}
\begin{lemma}
	\label{lemma: 1.3} 
	Choose $R$ and $A$ large enough. Assume that $\lfloor AR^d \rfloor$ particles lie initially on $\mathbb{B}(0,R/2)$. We call $\eta$ the initial configuration of these particles and $A(\eta)$ the aggregate they produce. There are positive constants $\{\kappa_d,\ d\geq 3\}$ independent of $R$ and $A$ such that 
	\[
	\mathbb{P}\left(\mathbb{B}(0,R)\not \subset A(\eta)\right)\leq \exp\left(-\kappa_d AR^2\right).
	\]
\end{lemma}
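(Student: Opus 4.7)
This lemma is a ball-covering estimate in the style of Lawler-Bramson-Griffeath and is proved in \cite{asselah2013sublogarithmic}; my plan mirrors that proof. The strategy is: for each site $x \in \mathbb{B}(0,R)$, establish
\[
\mathbb{P}(x \notin A(\eta)) \leq \exp(-\kappa' AR^2),
\]
and then take a union bound over the $O(R^d)$ sites of $\mathbb{B}(0,R)$. For $R$ and $A$ large enough, the polynomial factor $R^d$ is absorbed by the exponential, yielding the stated constant $\kappa_d$.

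To bound $\mathbb{P}(x \notin A(\eta))$ for a fixed $x$, I use the key observation that $\{x \notin A(\eta)\} \subset \{W(\eta,x) = 0\}$, where $W(\eta,x)$ counts the particles whose associated walk visits $x$ strictly before the particle settles; indeed, as soon as a walk reaches $x$, either $x$ is already in the aggregate, or the particle settles exactly at $x$. Next I set up the Asselah-Gaudillière stochastic comparison in the spirit of \eqref{eqn: stoch eq}. Letting $M(\eta,x)$ denote the number of independent simple random walks from $\eta$ that ever hit $x$ and $L(\eta,x)$ the analogous count restricted to walks started from aggregate sites outside a ball $\mathcal{Z}$ of constant radius $b$ centered at $x$, one gets
\[
W(\eta,x) + L(\eta,x) + |\mathcal{Z}| \geq \widetilde{M}(\eta,x) \overset{law}{=} M(\eta,x).
\]
Choosing $b$ large enough ensures hypothesis \textbf{(H1)} of Lemma \ref{lemma: 6.2}, since a walk started at distance $\geq b$ from $x$ hits $x$ with probability at most $C/b^{d-2}$ in dimension $d\geq 3$.

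The heart of the estimate is the lower bound $\mu(x) := \mathbb{E}[M(\eta,x)] - \mathbb{E}[L(\eta,x)] \gtrsim AR^2$. Using the transient Green's function asymptotics $G(y,x) \asymp \|y-x\|^{-(d-2)}$ valid in $d\geq 3$ and summing the contributions of the $\lfloor AR^d \rfloor$ particles distributed in $\mathbb{B}(0,R/2)$, one obtains
\[
\mathbb{E}[M(\eta,x)] \asymp A \int_{\mathbb{B}(0,R/2)} \|y-x\|^{-(d-2)}\, dy \asymp AR^2,
\]
while $\mathbb{E}[L(\eta,x)]$ is of strictly smaller order. Applying Lemma \ref{lemma: 6.2} with $\xi_n = 1$ and optimizing in $\lambda$ then yields $\mathbb{P}(W(\eta,x) < 1) \leq \exp(-\kappa AR^2)$, and the union bound closes the argument. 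The main obstacle is the control of $\mathbb{E}[L(\eta,x)]$: walks emanating from aggregate sites close to $x$ could in principle contribute enough mass to cancel the signal in $\mu(x)$. This is handled by an induction on $R$, using a weaker quantitative version of the lemma itself as the induction hypothesis to derive an a priori bound on how concentrated the aggregate can be near $x$; this inductive step, rather than the Green function computation, is the technically delicate point.
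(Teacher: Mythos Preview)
The paper does not prove this lemma; it is quoted verbatim from \cite{asselah2013sublogarithmic} (their Lemma~1.3), so there is no in-paper argument to compare against. Your sketch has the right flavor but contains a genuine technical slip and a confused final paragraph.

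The slip is in the setup of $L(\eta,x)$. You define $M$ and $L$ via \emph{unstopped} transient walks and let $L$ run over the (random) aggregate $A(\eta)\setminus\mathcal{Z}$. But Lemma~\ref{lemma: 6.2} requires $W_n$ and $L_n$ to be \emph{independent}, and it requires $L_n$ to be a sum of independent Bernoulli variables; neither holds if the index set of $L$ is the random set $A(\eta)$, which is measurable with respect to the same walks that determine $W$. The standard fix (exactly the one used in \eqref{eqn: stoch eq}--\eqref{eqn: stoch dom}) is to stop walks on exiting a fixed ball, say $\mathbb{B}(0,R)$, and then use the deterministic inclusion $A(\eta)\cap\mathbb{B}(0,R)\subset\mathbb{B}(0,R)$ to replace the random aggregate by $\mathbb{B}(0,R)\setminus\mathcal{Z}$ with \emph{fresh} walks. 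With that correction the bound $\mathbb{E}[L]\le CR^2$ is immediate (it is the expected exit time of the ball from $x$), and no induction is needed at all; your closing remarks about controlling $\mathbb{E}[L]$ via ``induction on $R$'' are off target.

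A second issue then appears: once walks are stopped on $\partial\mathbb{B}(0,R)$, the Green-function lower bound $\mathbb{E}[M]\gtrsim AR^2$ degenerates for $x$ close to $\partial\mathbb{B}(0,R)$, so a direct union bound over all $x\in\mathbb{B}(0,R)$ does not close. The actual argument in \cite{asselah2013sublogarithmic} handles this by an iterative shell-covering scheme (growing the covered ball in increments and redistributing the surviving particles at each step), which is perhaps what you are gesturing at with ``induction on $R$'' --- but that is a different mechanism from the one you describe, and it is the real content of the proof.
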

In our case, we know that each tile has been hit with at least $\frac{\kappa C}{3}\log (n)^{d/2}$ particles. Now, recall that a cell's radius is twice the radius of a tile, meaning we can apply Lemma \ref{lemma: 1.3} directly, and get
\begin{multline*}
	\mathbb{P}\left(\mathcal{C}(z) \text{ is not covered } \;\middle|\; \forall\tau\in\Tauk,\ \Wk(n\mathbbm{1}_\mathcal{H},\tau)\geq\frac{\kappa C}{3}\log (n)^{d/2} \right)\\
	\leq \exp\left(-\kappa_d C \log(n)\right).
\end{multline*}
Now, using that 
\[
\# \partial_{k,n}=\#\left(\partial\Rk \cap \mathbb{Z}_{n^{\alpha}}\right)=2\times \left(2n^{\alpha}+1\right)^{d-1}\leq Kn^{\alpha(d-1)}
\]
we can conclude on the upper bound of \eqref{eqn: many tile}. We have 
\begin{multline*}
	\mathbb{P}\left(\forall\tau\in\Tauk,\ \Wk(n\mathbbm{1}_\mathcal{H},\tau)\geq\frac{1}{3}\mu(\tau),\ S_k \text{ is not covered }\right)\\
	\leq Kn^{\alpha(d-1)}\exp\left(-\kappa_d C \log(n)\right)
\end{multline*}
which decreases, for $C$ large enough, faster than any given power of $n^{-1}$. This concludes the proof of the lower bound.

We understand thanks to Lemma \ref{lemma: 1.3} the source of the sublogarithmic fluctuations for the aggregate. We use Lemma \ref{lemma: 1.3} in hopes of applying the Borel-Cantelli Lemma and getting a bound decreasing faster than any power of $n$. The bound we obtain is of order $\exp\left(-\kappa_d AR^2\right)$, hence choosing $R$ of order $\sqrt{\log n}$ means $\exp\left(-\kappa_d AR^2\right)$ \comment{is of order $n^{-\kappa A}$, with $\kappa>0$ and $A = \Theta(1)$ (\textit{i.e.} if there exist constants such that $c_1 \leq A \leq c_2$)}. We therefore pick shells of size $R \approx \sqrt{\log n}$, leading to sublogarithmic fluctuations. Note that the bound in Lemma \ref{lemma: 1.3} for $d=2$ is equal to $\exp\left(-\kappa_2 \frac{AR^2}{\log R}\right)$, so choosing $R\approx \sqrt{\log n}$ no longer grants a functional bound for the Borel-Cantelli Lemma.
\subsection{Proof for the upper bound}
\label{sub: the upper bound}
In this section, we prove the upper bound of Theorem \ref{thm: shape}, that is: for any integers $d\geq 3$ and $\alpha \geq 1$, there exists $C>0$ such that, almost surely, there exists $N\geq 1$ such that for any $n\geq N$,
\[	
\An \cap \mathbb{Z}_{n^\alpha} \subset \mathcal{R}_{n/2+C\sqrt{\log n}}\cap \mathbb{Z}_{n^\alpha}.
\]
The proof follows the same lines as in \cite{chenavier2023bi}. Fix $\alpha\geq 1$ and take $C>0$. 

%
From the Borel-Cantelli lemma, it is sufficient to prove that 
\begin{equation}
	\label{eqn: upper bound}
	\mathbb{P}\left( \An \cap \mathbb{Z}_{n^{\alpha}} \not \subset \mathcal{R}_{n/2+C\sqrt{\log n}}\cap \mathbb{Z}_{n^\alpha} \right)
\end{equation}
is smaller than any power of $n^{-1}$. To do so, we define the random variable 
\[
X(n)=\max\left\{|z_1|,\ z\in \An\cap \mathbb{Z}_{n^{\alpha}}\right\}.
\]
Now, notice that we can bound \eqref{eqn: upper bound} by $\mathbb{P}\left(X(n)> \frac{n}{2}+C\sqrt{\log n} \right)$.
Hence, we write :
\begin{align*}
	\mathbb{P}\left( X(n) >\frac{n}{2} + C\sqrt{\log n} \right) & \leq \mathbb{P}\left( \exists z \in \An \cap \Z_{n^{\alpha}},\ X(n) = |z_1| >\frac{n}{2} + C\sqrt{\log n} \right)\\
	& \leq \sum_{z \in \Z_{n^{\alpha}}} \mathbb{P}\left(z \in \An,\ X(n) = |z_1| >\frac{n}{2} + C\sqrt{\log n} \right).
\end{align*}
Now, since $\An$ is translation-invariant (with respect to $T_k,\ k \in \mathcal{H}$), this quantity only depends on the first coordinate of $z$. For symmetry reasons, we can focus only on the case where $z_1 \geq 0$. So for $l\geq 0$, we define $z(l)$ as $z(l):= (l + \frac{n}{2},0, \dots, 0)$, and write:
\begin{multline*}
	\mathbb{P}\left( X(n) >\frac{n}{2} + C\sqrt{\log n} \right)\\
	\begin{split}
		&\leq 2(2n^{\alpha}+1)^{d-1}\mathbb{P}\left( \exists l > C\sqrt{\log n},\ z(l) \in \An ,\ X(n) = l+\frac{n}{2} \right) \\
		& \leq 2(2n^{\alpha}+1)^{d-1}\sum_{l \geq C\sqrt{\log n}}\mathbb{P}\left( z(l) \in \An ,\ X(n) = l+\frac{n}{2} \right).
	\end{split}
\end{multline*}
Now, fix $l \geq C\sqrt{\log(n)}$.
When bounding $\mathbb{P}\left( z(l) \in \An ,\ X(n) = l+\frac{n}{2} \right)$, let us split this probability into two parts, claiming that if $z(l)$ belongs to the aggregate, either a thin tentacle of settled particles branches out to that point, or there are many particles settled in a ball around $z(l)$. We will once again be using Lemma 2 of \cite{jerison2012logarithmic}, which was previously used in Section \ref{section: global upper bound}.
We write
\begin{multline}
	\mathbb{P}\left(z(l) \in \An,\ X(n) = z_1(l) = l+\frac{n}{2} \right) \\
	\leq 	\mathbb{P}\left(\#\left(\An\cap \comment{\mathbb{B}(z(l),l)}\right)>\beta l^d,\ X(n) = z_1(l) = l+\frac{n}{2} \right)\\
	+\mathbb{P}\left(z(l)\in \An,\ \#\left(\An\cap \mathbb{B}(z(l),l)\right)\leq \beta l^d \right),	\label{eqn: tentaclebis}
\end{multline}
where $\beta$ is chosen as in Lemma 2 of \cite{jerison2012logarithmic}, allowing us to handle the last term of \eqref{eqn: tentaclebis}. This gives
\begin{equation}
	\label{eqn: tentacle no sum}
	\mathbb{P}\left(z(l)\in \An,\ \#\left(\An\cap \mathbb{B}(z(l),l)\right)\leq \beta l^d \right)\leq K_0e^{-cl^2}.
\end{equation}
We write 
\begin{equation*}
	\sum_{l\geq C\sqrt{\log n }}K_0e^{-cl^2} \leq e^{-\frac{cC^2\log n}{2}}\sum_{l\geq C\sqrt{\log n}}e^{-\frac{cl^2}{2}}
	\leq Ke^{-\frac{cC^2\log n}{2}},
\end{equation*}
which, \comment{by taking $C$ large enough}, is smaller than any power of $n^{-1}$.
\comment{Let us now define a new random aggregate, namely $\tilde{A}$, built using the following protocol. For an initial configuration $\eta$, we throw particles according to the usual IDLA protocol, but we freeze all particles once they reach $\partial \mathcal{R}_{n/2}$. Once all particles have been thrown from $\eta$, we unfreeze the particles stationed on $\partial \mathcal{R}_{n/2}$ and let them continue their trajectory (until they exit the aggregate).
	\newline
	From the Abelian property, we know that $\tilde{A}(\eta) \overset{\mathrm{law}}{=} A(\eta)$. In particular, we have that 
	\begin{equation}
		\label{eqn: cardinal A tildeA}
		\#\left(\An\cap \mathbb{B}(z(l),l)\right) \overset{\mathrm{law}}{=} \#\left( \tilde{A}_n[\infty] \cap \mathbb{B}(z(l),l)\right),
	\end{equation}
	where $\tilde{A}_n[\infty]$ is obtained by taking the increasing union over $M\geq 0$ of $\tilde{A}_n[M]$.
	To handle the first term of the right-hand side of \eqref{eqn: tentaclebis}, we follow the same method as in \cite{chenavier2023bi}.}
For an initial configuration $\eta$ and a set $B\subset \mathbb{Z}^d$, \comment{with respect to the aggregate $\tilde{A}(\eta)$}, we denote by $M^*_{n/2+l}\left(\eta, B\right)$ the number of random walks with initial configuration $\eta$ and which satisfy the following conditions:
\begin{itemize}
	\item The random walks intersect $B$ before they exit  $\mathcal{R}_{n/2+l}$
	\item The particles associated with the random walks hit $\partial \mathcal{R}_{n/2}$ 
\end{itemize}
The path of such a random walk is illustrated by Figure \ref{fig: walk in Mn/2} below: the aggregate is contained inside the dashed blue lines, the particle's path is represented by the red line, and the associated random walk's path by the dashed red line. 
\newline 
Just as we did for $A_n[\infty]$, we can define the random variable 
\[
\tilde{X}(n)=\max\left\{|z_1|,\ z\in \tilde{A}_n[\infty]\cap \mathbb{Z}_{n^{\alpha}}\right\}.
\]
Since $\tilde{A}_n[\infty] \overset{\mathrm{law}}{=} \An$, we have that $\tilde{X}(n) \overset{\mathrm{law}}{=} X(n)$. \newline
Notice that on the event ${\{\tilde{X}(n)=z_1(l)\}}$, the aggregate's furthermost point (on the $x$ axis) is therefore $z_1(l)$ and has a coordinate equal to $z_1(l)= l+\frac{n}{2}$. This implies the following inequality:
\begin{equation}
	\label{eqn: ineq M* tilde}
	\#\left(\comment{\tilde{A}_n[\infty]}\cap \mathbb{B}(z(l),l) \right) \overset{\mathrm{a.s.}}{\leq} M^*_{n/2+l}\left(\config, \mathbb{B}^{-}(z(l),l)\right),
\end{equation}
where $\mathbb{B}^{-}(z(l),l):=\mathbb{B}(z(l),l)\cap \mathcal{R}_{n/2+l}$.
Indeed, on the event $\tilde{X}(n)=z_1(l)$, the aggregate cannot go any further than $\mathcal{R}_{n/2+l}$ and therefore any particle of $\tilde{A}_n[\infty]\cap \mathbb{B}(z(l),l)$ necessarily hits $\partial \mathcal{R}_{n/2}$ before exiting the aggregate, and the random walk associated to that particle also necessarily intersected $\mathbb{B}^{-}(z(l),l)$ before exiting $\mathcal{R}_{n/2+l}$. The crucial point is that no particle can escape from $\mathcal{R}_{n/2+l}$ on this event.
This implies inequality \eqref{eqn: ineq M* tilde}. Combining this with \eqref{eqn: cardinal A tildeA}, we have that 
\begin{equation}
	\label{eqn: ineq M*}
	\#\left(\An\cap \mathbb{B}(z(l),l) \right) \overset{\mathrm{sto.}}{\leq} M^*_{n/2+l}\left(\config, \mathbb{B}^{-}(z(l),l)\right).
\end{equation}

\begin{figure}[!h]
	\centering
	\begin{tikzpicture}[scale=0.7, use Hobby shortcut,closed=true]
		\fill [color=gray!40] (6,0) .. (4,2) .. (2,3) .. (0,2) .. (-4,1.5) .. (-5.7, -2) .. (-4,-5) .. (0,-6) .. (5,-6) .. (5,-3) .. (5.3,-2) ;
		\fill [color=gray!20] (6,-.3) circle (1) ;
		\draw [dashed, blue] (6,0) to[closed,curve through={(4,2) .. (2,3) .. (0,2) .. (-4,1.5) .. (-5.7, -2) .. (-4,-5) .. (0,-6) .. (5,-6) .. (5,-3) .. (5.3,-2)}] (6,0) ;
		\draw [->,very thick] (0,-6) -- (0,2);
		\draw [very thick](-6,-7) -- (-6,2);
		\draw [very thick](6,-7) -- (6,2);
		\draw [->,very thick] (-7,0) -- (7,0);
		\draw [red] (0,-2)to[curve through={(-2,1)..(3,-4) .. (4,-5) .. (3,-6) .. (4,-2) .. (4.5,-2.2) .. (5,-1.7)  }](5.5,-1); 
		\draw [red, dashed] (5.5,-1)to[curve through={(6,-0.5)}](8,-0.5); 
		\draw [dashed]  (5,-6) -- (5,2);	
		\draw [dashed]  (-5,-6) -- (-5,2);
		\draw (6,-.3) node {$\times$};
		\draw (6,-.3) node[right] {$z(l)$};
		\draw (-6,-7) node[below] {$\partial \mathcal{R}_{n/2+l}$};
		\draw (6,-7) node[below] {$\partial \mathcal{R}_{n/2+l}$};
		\draw (0,-6) node[below] {$\mathcal{H}$};
		\draw (5,-6) node[below] {$\partial \mathcal{R}_{n/2}$};
		\draw (-5,-6) node[below] {$\partial \mathcal{R}_{n/2}$};
		\filldraw [red] (5.58,-0.89) circle (1pt);
	\end{tikzpicture}
	\caption{An example of a walk in $M^*_{n/2+l}\left(\config,\mathbb{B}(z(l),l)\right)$
	}
	\label{fig: walk in Mn/2}
\end{figure}

Therefore, 
\begin{multline*}
	\mathbb{P}\left(\#\left(\An\cap \mathbb{B}(z(l),l)\right)>\beta l^d,\ z_1(l)=X(n) \right)\\
	\leq \mathbb{P}\left( M^*_{n/2+l}\left(\config, \mathbb{B}^{-}(z(l),l)\right) > \beta l^d\right).
\end{multline*}
The trajectories of the walks counted by $M^*$ are not independent. This comes from the fact that the \emph{particles} with which they are associated are killed upon exiting the current aggregate. Since the trajectories of the particles are highly dependent, this remains true for $M^*$. However, we only count walks for which the associated particle has hit $\partial \mathcal{R}_{n/2}$ before exiting the aggregate, meaning that the random walks evolve \emph{independently} after hitting $\partial \mathcal{R}_{n/2}$. In particular, they evolve independently once they reach $\mathbb{B}^{-}(z(l),l)$. We know that the walks counted my $M^*$ are independent after they reach $\partial \mathcal{R}_{n/2}$, so in particular after reaching $\partial \mathbb{B}^{-}(z(l),l)$. Recall that a random walk started in $x\in \partial \mathbb{B}^{-}(z(l),l)$ has probability at least $\rho>0$ to hit \comment{$\mathbb{B}(z(l),2l)^c \cap \mathcal{R}_{n/2+l}^c$} when it exits $\mathbb{B}(z(l),2l)$. It is important to note that $\rho$ does not depend on $n$ or $l$. In particular, random walks counted in $M^*_{n/2+l}$ have probability at least $\rho$ of hitting tile $\tilde{\tau}(z(l))$, where 
\[
\tilde{\tau}(z(l))=\mathbb{B}(z(l),2l)\cap \partial \mathcal{R}_{n/2+l}.
\]
We now split our probability into two, conditioning on the event where a sufficient amount of random walks of $M^*_{n/2+h(n)}$ have hit the tile $\tilde{\tau}(z(l))$ defined above. This gives 
\begin{multline}
	\mathbb{P}\left( M^*_{n/2+l}\left(\config, \mathbb{B}^{-}(z(l),l)\right)>\beta l^d \right)\\
	\leq 	\mathbb{P}\left(M^*_{n/2+l}\left(\config, \tilde{\tau}(z(l))\right)\leq\frac{\beta l^d\rho}{2} \;\middle|\; M^*_{n/2+l}\left(\config, \mathbb{B}^{-}(z(l),l)\right)>\beta l^d \right)\\
	+\mathbb{P}\left(M^*_{n/2+l}\left(\config, \tilde{\tau}(z(l))\right)>\frac{\beta l^d\rho}{2}\right).\label{eqn: binom bis}
\end{multline}

From what we said above, we know that the random walks are independent after they hit $\partial \mathbb{B}^{-}(z(l),h(n))$, and that any walk started from a point $x\in\partial \mathbb{B}^{-}(z(l),l)$ has probability at least $\rho$ to hit $\tilde{\tau}(z(l))$.\newline Therefore, conditional on the event $\Bigl\{M^*_{n/2+l}\left(\config, \mathbb{B}^{-}(z(l),l)\right)>\beta l^d\Bigr\}$, the random variable  $M^*_{n/2+l}\left(\config, \tilde{\tau}(z(l))\right)$ stochastically dominates a binomial distribution with parameters $\beta l^d$ and $\rho$, denoted by $\mathcal{B}\left(\beta l^d,\rho\right)$. 
This allows us to handle the first term in \eqref{eqn: binom bis}.
Indeed, using the fact that $\mathbb{E}\left[\mathcal{B}\left(\beta l^d,\rho\right)\right] = \beta l^d\rho$, standard concentration inequality theory (e.g \cite{BoucheronS.2013CiAn}) gives: 
\[
\mathbb{P}\left(\mathcal{B}\left(\beta l^d,\rho\right)\leq \frac{\beta l^d \rho}{2}\right)\leq \exp\left(-\frac{\beta l^d\rho}{8}\right).
\]
Therefore, using the stochastic domination we mentioned above,
\begin{multline*}
	\mathbb{P}\left(M^*_{n/2+l}\left(\config, z(l),\tilde{\tau}(z(l))\right)\leq\frac{\beta l^d\rho}{2} \;\middle|\; M^*_{n/2+l}\left(\config, \mathbb{B}^{-}(z(l),l)\right)>\beta l^d \right)\\
	\leq\exp\left(-\frac{\beta l^d\rho}{8}\right).
\end{multline*}
Proceeding as with \eqref{eqn: tentacle no sum}, we can show that
\[
\sum_{l \geq C\sqrt{\log n}}\exp\left(-\frac{\beta l^d\rho}{8}\right) \leq K \exp\left( - \frac{\beta C^d {\log (n)}^{d/2}}{16} \right),
\]
which, for $C$ taken large enough, decreases faster than any power of $n^{-1}$.


Switching our focus to the second term \eqref{eqn: binom bis}, it remains to prove that for all $L>0$ and $n$ large,
\[
\sum_{l\geq C\sqrt{\log n}}\mathbb{P}\left(M^*_{n/2+l}\left(\config, \tilde{\tau}(z)\right)>\frac{\beta l^d\rho}{2}\right)\leq n^{-L}.
\]
We will know be using a lemma similar to Lemma \ref{lemma: 6.2}, which we give below.
\begin{lemma} (Lemma 2.5 of \cite{asselah2013sublogarithmic})
	\label{lemma AG Cheb}
	Suppose that a sequence of random variables \newline $W_n,\ L_n,\ M_n,\ \widetilde{M}_n$ and an event $\mathcal{A}_n$ satisfy for each $n\in \N$: 
	\[
	(W_n+L_n)\mathbbm{1}_{\mathcal{A}_n}\underset{sto}{\leq}\widetilde{M}_n \quad \mathrm{ and } \quad M_n\overset{\mathrm{law}}{=}\widetilde{M}_n.
	\]
	Assume that $W_n$ and $L_n$ are independent, and that $L_n$ and $M_n$ are series of independent Bernoulli random variables with finite expectations, with $L_n=\sum_{i\geq 0}Y_i^{(n)}$.
	Finally, assume that $\mu_n=\mathbb{E}\left[M_n\right]-\mathbb{E}\left[L_n\right]\geq 0$.
	\newline
	Then, for all $n\geq 0$, $\xi_n\in \R$ and $\lambda\in [0,\log 2]$, 
	\[
	\mathbb{P}\left(W_n\geq \xi_n, \mathcal{A}_n\right) \leq \exp\left( -\lambda(\xi_n-\mu_n)+\lambda^2\left(\mu_n + 4\sum_{i\geq 0}\mathbb{E}{Y_i^{(n)}}^2   \right)  \right).
	\]
\end{lemma}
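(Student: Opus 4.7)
The plan is to run a Cram\'er--Chernoff argument and reduce, via the stochastic domination hypothesis, the problem to bounding the ratio of moment generating functions $\mathbb{E}[e^{\lambda M_n}]/\mathbb{E}[e^{\lambda L_n}]$, which I would then estimate using the independent Bernoulli structure of both series.

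First, I would reduce to a ratio of MGFs. For any $\lambda\geq 0$, the exponential Markov inequality gives
\[
\mathbb{P}(W_n\geq\xi_n,\mathcal{A}_n)\leq e^{-\lambda\xi_n}\,\mathbb{E}\!\left[e^{\lambda W_n}\mathbf{1}_{\mathcal{A}_n}\right].
\]
Using the independence of $W_n$ and $L_n$ (combined with the fact that, as employed in the paper, $\mathcal{A}_n$ sits in the $\sigma$-algebra generated by $W_n$), one multiplies and divides by $\mathbb{E}[e^{\lambda L_n}]$ to obtain
\[
\mathbb{E}[e^{\lambda W_n}\mathbf{1}_{\mathcal{A}_n}]\,\mathbb{E}[e^{\lambda L_n}]=\mathbb{E}[e^{\lambda(W_n+L_n)}\mathbf{1}_{\mathcal{A}_n}].
\]
Now the stochastic domination $(W_n+L_n)\mathbf{1}_{\mathcal{A}_n}\underset{sto}{\leq}\widetilde M_n$, combined with $\widetilde M_n\overset{law}{=}M_n$ and the monotonicity of $x\mapsto e^{\lambda x}$, yields $\mathbb{E}[e^{\lambda(W_n+L_n)}\mathbf{1}_{\mathcal{A}_n}]\leq \mathbb{E}[e^{\lambda\widetilde M_n}]=\mathbb{E}[e^{\lambda M_n}]$, hence the key reduction
\[
\mathbb{E}[e^{\lambda W_n}\mathbf{1}_{\mathcal{A}_n}]\leq \frac{\mathbb{E}[e^{\lambda M_n}]}{\mathbb{E}[e^{\lambda L_n}]}.
\]

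Next, I would estimate each MGF via the independent Bernoulli representation. Writing $M_n=\sum_j Z_j^{(n)}$ and $L_n=\sum_i Y_i^{(n)}$, the inequality $\log(1+y)\leq y$ yields $\log \mathbb{E}[e^{\lambda M_n}]\leq(e^\lambda-1)\mathbb{E}[M_n]$, while the sharper lower bound $\log(1+y)\geq y-y^2$ (valid for $y\geq 0$) gives
\[
\log \mathbb{E}[e^{\lambda L_n}]\geq (e^\lambda-1)\mathbb{E}[L_n]-(e^\lambda-1)^2\sum_i \mathbb{E}[Y_i^{(n)}]^2.
\]
Subtracting these, using $\mu_n=\mathbb{E}[M_n]-\mathbb{E}[L_n]\geq 0$, and restricting to $\lambda\in[0,\log 2]$ so that the elementary estimates $e^\lambda-1\leq\lambda+\lambda^2$ and $(e^\lambda-1)^2\leq 4\lambda^2$ both hold, I would obtain
\[
\log\frac{\mathbb{E}[e^{\lambda M_n}]}{\mathbb{E}[e^{\lambda L_n}]}\leq(e^\lambda-1)\mu_n+(e^\lambda-1)^2\sum_i \mathbb{E}[Y_i^{(n)}]^2\leq\lambda\mu_n+\lambda^2\!\left(\mu_n+4\sum_i\mathbb{E}[Y_i^{(n)}]^2\right),
\]
which combined with the Markov step gives the announced bound.

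The main obstacle is bookkeeping around the factorization $\mathbb{E}[e^{\lambda W_n}\mathbf{1}_{\mathcal{A}_n}]\mathbb{E}[e^{\lambda L_n}]=\mathbb{E}[e^{\lambda(W_n+L_n)}\mathbf{1}_{\mathcal{A}_n}]$, which requires $\mathcal{A}_n$ to be (essentially) independent of $L_n$ and therefore needs a structural hypothesis beyond the bare independence $W_n\perp L_n$. In the paper's applications this is benign: $\mathcal{A}_n$ is determined by the finite configuration of particle trajectories defining $W_n$, while $L_n$ counts fresh, independently sampled walks started from an auxiliary configuration. A secondary but essential point is the choice of the sharper estimate $\log(1+y)\geq y-y^2$ for the denominator factors (rather than the more commonly cited $y-y^2/2$): this is precisely what lets the variance-like quantity $\sum\mathbb{E}[Y_i^{(n)}]^2$ --- rather than the much larger $\mathbb{E}[L_n]$ --- appear in the exponent.
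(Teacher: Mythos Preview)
The paper does not prove this lemma; it is quoted verbatim from \cite{asselah2013sublogarithmic} and only applied. Your Cram\'er--Chernoff argument is the standard one and is correct, matching the approach of the original reference: Markov's inequality, factorization via independence, stochastic domination to pass to $M_n$, and the Bernoulli MGF estimates with $\log(1+y)\leq y$ for the numerator and $\log(1+y)\geq y-y^2$ for the denominator, followed by $e^\lambda-1\leq\lambda+\lambda^2$ and $e^\lambda-1\leq 2\lambda$ on $[0,\log 2]$.

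One small remark on the stochastic domination step: what the hypothesis gives directly is $\mathbb{E}\bigl[e^{\lambda(W_n+L_n)\mathbf{1}_{\mathcal{A}_n}}\bigr]\leq\mathbb{E}\bigl[e^{\lambda\widetilde M_n}\bigr]$, and you should note explicitly that $e^{\lambda(W_n+L_n)}\mathbf{1}_{\mathcal{A}_n}\leq e^{\lambda(W_n+L_n)\mathbf{1}_{\mathcal{A}_n}}$ pointwise (the left side vanishes on $\mathcal{A}_n^c$ while the right side equals $1$ there). You are also right to flag that the factorization $\mathbb{E}[e^{\lambda W_n}\mathbf{1}_{\mathcal{A}_n}]\,\mathbb{E}[e^{\lambda L_n}]=\mathbb{E}[e^{\lambda(W_n+L_n)}\mathbf{1}_{\mathcal{A}_n}]$ requires $L_n$ to be independent of the pair $(W_n,\mathbf{1}_{\mathcal{A}_n})$, which is stronger than the bare hypothesis $W_n\perp L_n$ but is satisfied in every application made in the paper (where $\mathcal{A}_n$ is measurable with respect to the particle trajectories defining $W_n$, and $L_n$ counts fresh independent walks).
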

Using the same arguments as for \eqref{eqn: stoch eq}, we can establish the following equality:

\begin{equation}
	\label{eqn: stoch eq bis}
	\Mnstarl\left(\init, \tilde{\tau}(z(l))\right) + \Mnl\left(A_{n/2}(\init),\tilde{\tau}(z(l))\right) \overset{\mathrm{law}}{=} \tilde{M}_n,
\end{equation}
where ${A_{n/2}(\init)=A(\init)\cap \mathcal{R}_{n/2}}$, and $\tilde{M}_n$ is an independent copy of \newline $\Mnl\left(\init, \tilde{\tau}(z(l))\right)$. 

The idea here is to once again consider a walk counted by $\Mnl\left(\init, \tilde{\tau}(z(l))\right)$, and consider the trajectory of its associated particle. Either the particle has hit $\partial \mathcal{R}_{n/2}$ before exiting the aggregate, and is therefore counted by $\Mnstarl\left(\init,\tilde{\tau}(z(l))\right)$, or the particle has settled before on some site $x$. In that case, we can launch a new random walk from $x\in A_{n/2}(\init)$, which is accounted for by $\Mnl\left(A_{n/2}(\init),\tilde{\tau}(z(l))\right) $. 
Now, take $\alpha'>\alpha$. Let us denote by $\delta_I(n)$ the inner error of $A_{n/2}(\init)$ on $\mathbb{Z}_{n^{\alpha'}}$, that is
\[
\delta_I(n)=\max\left\{\frac{n}{2}-|z_1|,\ z\in\left(\mathcal{R}_{n/2}\setminus A_{n/2}(\init)\right)\cap\mathbb{Z}_{n^{\alpha'}}\right\}.
\]
This quantity is illustrated in Figure \ref{fig: inner error}. 
\begin{figure}[h]
	\centering
	\begin{tikzpicture}[scale=0.6, use Hobby shortcut,closed=true]
		\fill [color=gray!40] (4.6,2.9)..(4.5,6) .. (0,6).. (-4.5,4.5).. (-4,3.5).. (-4.8,-1).. (-5,-2).. (-6,-2.5).. (-6,-3).. (-2.4,-6).. (0,-6.7).. (4.5,-4) .. (4.5,0) ..(4.6,2.9) ;
		\fill [color=white] (-8,-2) rectangle (-5,-5);
		\draw [->,very thick] (0,-6) -- (0,6);
		\draw [thick]  (5,-6) -- (5,6);	
		\draw [thick]  (-5,-6) -- (-5,6);
		\draw [dashed,blue](-6,5) -- (6,5);
		\draw [dashed,blue](-6,-5) -- (6,-5);
		\draw [<->](-5,2.7) -- (-3.82,2.7);
		\draw (-6,5) node [left] {$\mathbb{Z}_{n^{\alpha'}}$};
		\draw (-6,-5) node [left] {$\mathbb{Z}_{n^{\alpha'}}$};
		\draw (-4.35,2.7) node[above] {$\delta_I(n)$};
		\draw (0,-6) node[below] {$\mathcal{H}$};
		\draw (5,-6) node[below] {$\partial \mathcal{R}_{n/2}$};
		\draw (-5,-6) node[below] {$\partial \mathcal{R}_{n/2}$};
	\end{tikzpicture}
	\caption{Illustration of the inner error}
	\label{fig: inner error}
\end{figure}

We will be using the work we did in the previous section to bound this inner error. 
Using the definition of $\delta_I(n)$, we have $\mathcal{R}_{n/2-\delta_I(n)}\cap \mathbb{Z}_{n^{\alpha'}}\subset A_{n/2}(\init)$, so combining this with \eqref{eqn: stoch eq bis}, we get
\[
\Mnstarl\left(\init, \tilde{\tau}(z(l))\right) + \Mnl\left(\mathcal{R}_{n/2-\delta_I(n)}\cap \mathbb{Z}_{n^{\alpha'}},\tilde{\tau}(z(l))\right) \overset{\mathrm{sto.}}{\leq} \tilde{M}_n.
\]
Now, for some $\alpha_d>0$ which will be chosen later, on the event \commentK{${\left\{\delta_I(n)\leq \frac{\alpha_dl}{2C}\right\}}$}, we have $\commentK{\mathcal{R}_{n/2-  \frac{\alpha_d l}{2C}}}\cap \mathbb{Z}_{n^{\alpha'}} \subset \mathcal{R}_{n/2-\delta_I(n)}\cap \mathbb{Z}_{n^{\alpha'}}$, \commentK{with the convention that $\mathcal{R}_m = \mathbbm{1}_{\mathcal{H}}$ when $m \leq 0$.} This gives:
\begin{equation*}
	\left(\Mnstarl\left(\init, \tilde{\tau}(z(l))\right) + \Mnl\left(\commentK{\mathcal{R}_{n/2-  \frac{\alpha_d l}{2C}}}\cap \mathbb{Z}_{n^{\alpha'}},\tilde{\tau}(z)\right)\right) \commentK{\mathbbm{1}_{\delta_I(n)\leq \frac{\alpha_dl}{2C}}}
	\overset{\mathrm{sto.}}{\leq} \tilde{M}_n.
\end{equation*}
This stochastic inequality is similar to the one required by Lemma \ref{lemma AG Cheb}. We can directly apply this lemma with 
\[
\begin{cases}
	W_n=\Mnstarl\left(\init, \tilde{\tau}(z(l))\right) \\
	L_n=\Mnl\left(\commentK{\mathcal{R}_{n/2-  \frac{\alpha_d l}{2C}}}\cap \mathbb{Z}_{n^{\alpha'}},\tilde{\tau}(z(l))\right) \\
	\widetilde{M}_n=\Mnl\left(\init,\tilde{\tau}(z(l))\right)\\
	\commentK{\mathcal{A}_n=\left\{\delta_I(n)\leq \frac{\alpha_d l}{2C}  \right\}}\\
	\xi_n=\frac{\beta l^d\rho}{2}
\end{cases}
\]
\newline
We show in Section \ref{sec: shape thm proofs} that the hypotheses in Lemma \ref{lemma AG Cheb} hold. 
\label{subsubsection: application of lemma bis}
Now, we write 
\begin{multline*}
	\mathbb{P}\left(M^*_{n/2+l}\left(\init, \tilde{\tau}(z(l))\right)>\frac{\beta l^d\rho}{2}\right) \\
	\leq\mathbb{P}\left(M^*_{n/2+l}\left(\init, \tilde{\tau}(z(l))\right)>\frac{\beta l^d\rho}{2},\ \delta_I(n)\leq \commentK{\frac{\alpha_d l}{2C}} \right)
	+\mathbb{P}\left(\delta_I(n)> \commentK{\frac{\alpha_d l}{2C}}\right).
\end{multline*}
We first focus our attention on the second term. We will be using our computations from the proof of the lower bound to control this term. \commentK{We have:}
\begin{align*}
	\sum_{l \geq C\sqrt{\log n}} \mathbb{P}\left(\delta_I(n)> \frac{\alpha_d l}{2C}\right) &\leq \sum_{l \geq \sqrt{\log n}} \mathbb{P}\left( \delta_I(n) > \frac{\alpha_d l}{2} \right) \\
	& \leq \sum_{l \geq \sqrt{\log n}} \exp(- \kappa \alpha_d l^2) \\
	& \leq K\exp\left( -\frac{\kappa \alpha_d \log n}{2} \right).
\end{align*}
Taking $\alpha_d$ sufficiently large, this term becomes smaller than any given power of $n^{-1}$.

We now shift our focus to the first term of the sum. Fix $\alpha_d$ large enough so that the previous term 
is smaller than any power of $n^{-1}$.

After an application of Lemma \ref{lemma AG Cheb} and an optimization detailed in Section \ref{sec: upper bound proofs}, we get for some constant $\kappa>0$, if $C$ is chosen large enough:
\begin{equation}
	\mathbb{P}\left( M^*_{n/2+l}\left(\init, \mathbb{B}(z(l),\tilde{\tau}(z(l)))\right)>\frac{\beta l^d\rho}{2},\ \delta_I(n)\leq \frac{\alpha_d l}{2C}\right)
	\leq\exp\left(-\kappa l^2\right) \label{line: optimizationbis}.
\end{equation}
Hence, 
\begin{align*}
	\sum_{l \geq C\sqrt{\log n}} \mathbb{P}\left( M^*_{n/2+l}\left(\init, \mathbb{B}(z(l),\tilde{\tau}(z(l)))\right)>\frac{\beta l^d\rho}{2},\ \delta_I(n)\leq \frac{\alpha_d l}{2C}\right) &\leq \sum_{l \geq C\sqrt{\log n}} \exp\left(-\kappa l^2\right) \\
	& \leq K\exp\left(-\frac{\kappa C^2 \log n}{2}\right),
\end{align*}
Which goes to zero faster than any power of $n^{-1}$, \comment{given $C$ is large enough}, and concludes the proof of our theorem.
\subsection{Auxiliary proofs}
\label{sec: shape thm proofs}
\subsubsection{Proofs from the lower bound}
\label{sec: lower bound proofs}
We start by giving the proof of \eqref{eqn: lower bound mu}. To do it, we apply the following lemma, which is a simple extension of Lemma 6.4 of \cite{chenavier2023bi}.
\begin{lemma}
	\label{lemma 6.4}
	Let $r\leq r'$ and let $\tau \subset\partial \mathcal{R}_{r'}$ be finite. Then 
	\[
	\mathbb{E}\left[{M_{r'}(\mathcal{R}_{r},\tau)}\right]=\frac{2r+1}{2}\#\tau.
	\]
	In particular, for any $r'\geq 1$,
	\begin{equation}
		\label{eq:expectationbis}
		\mathbb{E}\left[{M_{r'}(\mathbbm{1}_{\mathcal{H}}, \tau)}\right] = \frac{\#\tau}{2}.
	\end{equation}
\end{lemma}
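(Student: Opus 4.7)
The plan is to expand the expectation as a double sum and reduce it to a one-dimensional gambler's ruin computation by projecting onto the first coordinate. Writing out
\[
\mathbb{E}\!\left[M_{r'}(\mathbf{1}_{R_r},\tau)\right] \;=\; \sum_{x\in\tau}\sum_{y\in R_r} \mathbb{P}_y\!\left(S(H_{r'})=x\right),
\]
I would first exploit translation invariance in the last $d-1$ coordinates. Since $R_{r'}$ is a slab depending only on the first coordinate, for a fixed $x=(x_1,x')\in\partial R_{r'}$ (so $x_1\in\{-r',r'\}$) and $y=(y_1,y')\in R_r$, the identity $\mathbb{P}_y(S(H_{r'})=x)=\mathbb{P}_{(y_1,0)}(S(H_{r'})=(x_1,x'-y'))$ holds, hence summing over $y'\in\mathbb{Z}^{d-1}$ gives
\[
\sum_{y'\in\mathbb{Z}^{d-1}} \mathbb{P}_{(y_1,y')}\!\left(S(H_{r'})=x\right) \;=\; \mathbb{P}_{y_1}\!\left(X_{H_{r'}}=x_1\right),
\]
where $X$ denotes the first-coordinate process of the simple symmetric random walk on $\mathbb{Z}^d$.

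The second step is the one-dimensional reduction: the process $X$ is a lazy simple random walk on $\mathbb{Z}$ (it stays put with probability $(d-1)/d$ and moves by $\pm 1$ each with probability $1/(2d)$), and the projection of $H_{r'}$ agrees with the first hitting time of $\{-r',r'\}$ by $X$. Since laziness does not alter the exit distribution, the classical gambler's ruin formula yields
\[
\mathbb{P}_{y_1}\!\left(X_{H_{r'}}=r'\right)=\frac{y_1+r'}{2r'},\qquad \mathbb{P}_{y_1}\!\left(X_{H_{r'}}=-r'\right)=\frac{r'-y_1}{2r'}.
\]
Summing either expression over $y_1\in[\![-r,r]\!]$ gives $\frac{2r+1}{2}$ because $\sum_{y_1=-r}^{r} y_1=0$. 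Therefore, for every $x\in\partial R_{r'}$,
\[
\sum_{y\in R_r} \mathbb{P}_y\!\left(S(H_{r'})=x\right)=\frac{2r+1}{2},
\]
and summing over $x\in\tau$ yields the announced identity $\mathbb{E}[M_{r'}(R_r,\tau)]=\frac{2r+1}{2}\#\tau$.

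The second identity \eqref{eq:expectationbis} is obtained by repeating the argument with the initial configuration $\mathbf{1}_{\mathcal{H}}$, which corresponds to $y_1=0$ only. The $y'$-sum again collapses by translation invariance to $\mathbb{P}_{0}(X_{H_{r'}}=x_1)=\tfrac{1}{2}$, and summing over $x\in\tau$ gives $\#\tau/2$. This is also the case $r=0$ of the first formula, since $R_0=\mathcal{H}$.

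No real obstacle is expected: the only point requiring care is the justification that projecting onto the first coordinate does not alter the exit distribution on $\partial R_{r'}$, which follows from the fact that $\partial R_{r'}$ is determined solely by that coordinate and from the standard time-change argument for the lazy one-dimensional walk. The rest is bookkeeping in the gambler's ruin formula.
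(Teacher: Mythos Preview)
Your proof is correct and is precisely the natural argument one expects. The paper itself does not supply a proof of this lemma, merely noting that it is ``a simple extension of Lemma 6.4 of \cite{chenavier2023bi}''; the referenced two-dimensional version is proved by exactly the same gambler's ruin computation on the first coordinate, so your approach matches what is intended.
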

Now, to get \eqref{eqn: lower bound mu}, we write
\begin{align*}
	\mu(\tau)&=	\mathbb{E}\left[\Mk(n\mathbbm{1}_\mathcal{H},\tau)\right] - \mathbb{E}\left[\Mk\left(\Rk\setminus\mathcal{Z},\tau\right)\right]\\
	&\geq n\mathbb{E}\left[\Mk(\mathbbm{1}_\mathcal{H},\tau)\right] - \mathbb{E}\left[\Mk\left(\Rk,\tau\right)\right]\\
	&\geq \frac{\#\tau}{2}\left(n-2k\sqrt{\log n}-1\right)\\
	&\geq cC\left(\sqrt{\log n}\right)^d.
\end{align*}
We now show the bound given by \eqref{eqn: E Yi}. \comment{Recall that $k$ is taken such that $k\leq \frac{n}{2\sqrt{\log n}}-C$.}
Using Lemma \ref{lemma 6.3}, we have
\begin{align*}
	\mathbb{P}_y\left(S(H_{k\sqrt{\log n}})\in\tau\right) &\leq \#\tau \max_{x\in\tau}\mathbb{P}_y\left(S(H_{k\sqrt{\log n}})=x\right)\\
	&\leq \#\tau \max_{x\in\tau} \frac{\kappa}{\|y-x\|^{d-1}}\\	
	&\leq \#\tau \frac{c}{\|y-z\|^{d-1}}.
\end{align*}
Hence
\[
\sum_{y\in\Rk \setminus\mathcal{Z}}\mathbb{P}_y\left(S(H_{k\sqrt{\log n}})\in\tau\right)^2 \leq (c\#\tau)^2 \sum_{y\in\Rk \setminus\mathcal{Z}}\frac{1}{\left(\|y-z\|^2\right)^{d-1}}.
\]
Since $\#\tau$ is of order $(\sqrt{\log n})^{d-1}$, it suffices to show that \comment{$\sum_{y\in\Rk \setminus\mathcal{Z}}\frac{1}{\left(\|y-z\|^2\right)^{d-1}}$ is bounded by a finite constant}, and therefore that
\[
\sum_{j=1}^{2k\sqrt{\log n}}\int_{{\left[ 1,\infty \right[}^{d-1} } \frac{\dx x_1\dots \dx x_{d-1}}{\left(j^2+x_1^2+\dots x_{d-1}^2\right)^{d-1}}
\]
is as well.
This is true since
\begingroup
\allowdisplaybreaks
\begin{align}
	\sum_{j=1}^{2k\sqrt{\log n}}\int_{{\left[ 1,\infty \right[}^{d-1} } \frac{\dx x_1\dots \dx x_{d-1}}{\left(j^2+x_1^2+\dots x_{d-1}^2\right)^{d-1}}
	&\leq \comment{c_d}\sum_{j=1}^{2k\sqrt{\log n}}\int_{1}^{\infty}\frac{r^{d-2}}{\left(j^2+r^2\right)^{d-1}}\, \dx r \nonumber \\ 
	&\leq \comment{c_d}\sum_{j=1}^{\comment{\infty}} \int_{1/j}^{\infty}\frac{j^{d-2}r^{d-2}}{j^{2(d-1)}\left(1+r^2\right)^{d-1}} j\dx r \nonumber \\ 
	&\leq \comment{c_d}\sum_{j=1}^{\comment{\infty}} \frac{1}{j^{d-1}} \int_{0}^{\infty}\frac{r^{d-2}}{\left(1+r^2\right)^{d-1}} \dx r, \label{eqn: integration}
\end{align}
\endgroup
\comment{where $c_d$ denotes the volume of the $(d-2)$-dimensional sphere.} Now, since $d\geq 3$, we have that \eqref{eqn: integration} is finite. Thus, for some $c>0$,
\begin{equation*}
	\sum_{y\in\Rk \setminus\mathcal{Z}}\mathbb{P}_y\left(S(H_{k\sqrt{\log n}})\in\tau\right)^2\leq c\log(n)^{d-1}.
\end{equation*}

\paragraph{Optimization in \eqref{eqn: optimization}:}
In this section, we detail the computations of the optimization in \eqref{eqn: optimization}. In all that follows, $\comment{\kappa'}$ denotes a generic constant.
\newline
We wish to minimize $\lambda\mapsto \exp\left( -\frac{\lambda}{3}\mu(\tau) +\frac{\lambda^2}{2}\left(\mu(\tau)+\comment{\eta}\log (n)^{d-1}\right) \right)$ on $\R_+$.
Recall \comment{from \eqref{eqn: lower bound mu}} that $\mu(\tau)\geq \kappa C\log(n)^{d/2}$. Note that it suffices to minimize the function within the exponential, which happens to be a second degree polynomial. Pick $\lambda^*$ minimizing the polynomial, that is 
\[
\lambda^*=\dfrac{\mu(\tau)}{3\left( \mu(\tau) +\comment{\eta}\log(n)^{d-1} \right)}\comment{.}
\]
This gives
\begin{align*}
	-\frac{\lambda^*}{3}\mu(\tau) +\frac{(\lambda^*)^2}{2}\left(\mu(\tau)+\comment{\eta}\log (n)^{d-1}\right)&\leq -\frac{\mu(\tau)^2}{18\left(\mu(\tau)+\comment{\eta} \log (n)^{d-1}\right)}\\
	&\leq -\frac{\mu(\tau)}{18\left(1+\frac{\comment{\eta} \log (n)^{d-1}}{\mu(\tau)}\right)}.
\end{align*}
Now, 
\[
1+\frac{\comment{\eta} \log (n)^{d-1}}{\mu(\tau)}\leq 1+\frac{\comment{\eta} \log (n)^{d-1}}{\kappa C\log(n)^{d/2}}\leq \frac{\comment{\kappa'}}{C}\log(n)^{d/2-1}.
\]
\comment{Therefore, combining this with} the fact that $\mu(\tau)\geq \kappa C\log(n)^{d/2}$, we get
\begin{align*}
	-\frac{\mu(\tau)}{18\left(1+\frac{\comment{\eta}\log (n)^{d-1}}{\mu(\tau)}\right)}&\leq -\frac{C\mu(\tau)}{\comment{\kappa'} \log(n)^{d/2-1}}\\
	&\leq -\frac{\kappa C^2\log(n)^{d/2}}{\comment{\kappa'}\log(n)^{d/2-1}}\\
	&\leq -\comment{\kappa'} C^2\log n.
\end{align*}

\subsubsection{Proofs from the upper bound}
\label{sec: upper bound proofs}
We now show proofs concerning the results for the upper bound of Theorem \ref{thm: shape}. Let us begin by showing that the hypotheses in Lemma \ref{lemma AG Cheb} hold. \commentK{In all that follows, we fix $l \geq C\sqrt{\log n}$. Recall that $z(l)$ is such that ${z(l)=(\frac{n}{2} +l, 0, \dots, 0)}$.}

\paragraph{Hypotheses of Lemma \ref{lemma AG Cheb}:}
We first need to check that \mbox{$\mu_n=\mathbb{E}\left[M_n\right]-\mathbb{E}\left[L_n\right]\geq 0$}. This once again uses Lemma \ref{lemma 6.4}.
To lighten notation, we define ${\mathcal{R}(l,n,\alpha_d,C):=\mathcal{R}_{n/2-  \frac{\alpha_d l}{2C}}}$. 
We have:
\begin{align*}
	\mu_n&=\mathbb{E}\left[\Mnl\left(\init,\tilde{\tau}(z(l))\right)\right]-\mathbb{E}\left[\Mnl\left(\mathcal{R}(l, n,\alpha_d,C)\cap \mathbb{Z}_{n^{\alpha'}},\tilde{\tau}(z(l))\right)\right]\\
	&\geq \mathbb{E}\left[\Mnl\left(\init,\tilde{\tau}(z(l))\right)\right]-\mathbb{E}\left[\Mnl\left(\mathcal{R}(l, n,\alpha_d,C),\tilde{\tau}(z(l))\right)\right]\\
	&= \frac{\#\tilde{\tau}(z(l))}{2}\commentK{\left(n - 2\left(n/2 - \min\left( \frac{\alpha_d l}{2C},\ n/2  \right) \right) -1 \right)}\\
	&= \frac{\#\tilde{\tau}(z(l))}{2}\commentK{\left(\min\left(\frac{\alpha_d l}{C},\ n\right) -1 \right)}.
\end{align*}
In the case where $\frac{\alpha_d l}{C} \geq n$, then the minimum is $n$, and so $\mu_n \geq cn\#\tilde{\tau}(z(l))$ for some positive constant $c$. Since $\#\tilde{\tau}(z(l))$ is of order $l^{d-1}$ we have that $\mu_n \geq cl^{d-1}$.

In the other case, the minimum is $\frac{\alpha_d l}{C}$, and so in this case $\mu_n \geq c\#\tilde{\tau}(z(l))\frac{\alpha_d l}{C} = \frac{c\alpha_d}{C}l^d$, for some positive constant $c$. In both cases, the condition $\mu_n \geq 0$ is verified.

Now, following the same computations as the ones used to get \eqref{eqn: E Yi}, we get
\begin{align*}
	\sum_{y\in \mathcal{R}(l, n,\alpha_d,C)\cap \mathbb{Z}_{n^{\alpha'}} }\mathbb{P}_y\left(S(H_{n/2+l})\in\tilde{\tau}(z(l))\right)^2&\leq \sum_{y\in \mathcal{R}(l, n,\alpha_d,C) }\mathbb{P}_y\left(S(H_{n/2+l})\in\tilde{\tau}(z(l))\right)^2 \\
	&\commentK{\leq cl^{2d-2}.}
\end{align*}

\paragraph{Control of $\mu_n$:}
\label{subsubsection: control of mu}
We showed just above that the hypotheses of Lemma \ref{lemma AG Cheb} were satisfied. However, when applying this lemma and optimizing, we need an upper bound on $\mu_n$, given by the following proposition. 
\begin{lemma}
	\label{lemma: upper bound mu}
	There exist positive constants $c,\ C_0$ such that for $n$ large enough, for all $l > C\sqrt{\log n}$,
	\[
	\mu_n\leq \frac{c\alpha_d \commentK{l^d}}{C}+C_0.
	\]
\end{lemma}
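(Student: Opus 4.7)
The plan is to use the mass-transport identity of Lemma~\ref{lemma 6.4} to compute the two expectations appearing in $\mu_n$ exactly, and then to dispatch the sites outside $\Z_{n^{\alpha'}}$ via a heat-kernel tail bound in the slab. Throughout, write $R_\star := R(n,\alpha_d,C)$ and $r' := n/2+h(n)$.

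First, Lemma~\ref{lemma 6.4} applied with $r=0$ (so $R_0=\mathcal{H}$) gives $\mathbb{E}[\Mn(\init,\tilde\tau(z))]=\tfrac{n}{2}\#\tilde\tau(z)$, and applied with $r=n/2-\alpha_d h(n)/(2C)$ it gives $\mathbb{E}[\Mn(R_\star,\tilde\tau(z))]=\tfrac{n-\alpha_d h(n)/C+1}{2}\#\tilde\tau(z)$. Since $\mathbb{E}[\Mn(\cdot,\tilde\tau(z))]$ is additive on disjoint starting sets, decomposing $R_\star=(R_\star\cap\Z_{n^{\alpha'}})\sqcup(R_\star\setminus\Z_{n^{\alpha'}})$ yields
\begin{equation*}
\mu_n \;=\; \tfrac{\alpha_d h(n)/C - 1}{2}\,\#\tilde\tau(z) \;+\; \mathbb{E}\bigl[\Mn(R_\star\setminus\Z_{n^{\alpha'}},\tilde\tau(z))\bigr].
\end{equation*}
As $\tilde\tau(z)=\mathbb{B}(z,2h(n))\cap\partial R_{r'}$ is contained in a $(d-1)$-dimensional box of side $4h(n)$, we have $\#\tilde\tau(z)\leq c\,h(n)^{d-1}$, so the first term is at most $c\,\alpha_d h^d(n)/C$, matching the leading part of the desired bound.

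It then suffices to show that the transverse tail $\mathbb{E}[\Mn(R_\star\setminus\Z_{n^{\alpha'}},\tilde\tau(z))]$ is bounded by a fixed $C_0$ for $n$ large. Any $y$ in this set has some transverse coordinate with $|y_i|>n^{\alpha'}$; combined with $|z_i|\leq n^\alpha$ and $\alpha'>\alpha$, this forces $\|y_\perp-z_\perp\|_\infty\geq n^{\alpha'}-n^\alpha$, a quantity of order $n^{\alpha'}\gg r'$. Invoking the classical slab estimate for the Poisson kernel,
\begin{equation*}
\mathbb{P}_y\bigl(S(H_{r'})=x\bigr) \;\leq\; \frac{C}{(r')^{d-1}}\,\exp\!\Bigl(-c\,\tfrac{\|y_\perp-x_\perp\|}{r'}\Bigr),
\end{equation*}
valid for $y\in R_{r'}$ and $x\in\partial R_{r'}$, one sums first over $x\in\tilde\tau(z)$ (using $\#\tilde\tau(z)\leq c\,h(n)^{d-1}$ and $\|y_\perp-x_\perp\|\geq\|y_\perp-z_\perp\|-2h(n)$), then over $y_1\in[-n/2,n/2]$ (a factor of order $n$) and over transverse $y_\perp$ with $\|y_\perp-z_\perp\|_\infty\geq n^{\alpha'}-n^\alpha$ (a geometric-type calculation producing an extra $(r')^{d-1}$ factor and an $\exp(-c n^{\alpha'-1})$ decay). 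The combined bound is of the form $\mathrm{poly}(n)\exp(-c\,n^{\alpha'-1})$, which tends to $0$ since $\alpha'>1$ and is therefore eventually $\leq C_0$.

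The main obstacle is the Poisson-kernel slab estimate. The pointwise bound from Lemma~\ref{lemma 6.3} is only polynomial in $\|y-x\|^{-(d-1)}$, hence is not summable against the infinite transverse directions of $R_\star$. The required exponential transverse decay is classical (it can be obtained by a separation-of-variables argument, with the first nontrivial Dirichlet mode in the bounded $x_1$-coordinate providing the decay rate $\sim\pi/(2r')$, or equivalently by a coupling with Brownian motion in a strip), and would be either imported from the literature on random walks in tubes or proved directly in an appendix.
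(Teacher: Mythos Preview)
Your decomposition is the same as the paper's: split $\mu_n$ into the exact ``Lemma~\ref{lemma 6.4}'' difference $\tfrac{\alpha_d h(n)/C-1}{2}\#\tilde\tau(z)$ plus the transverse tail $\mathbb{E}[\Mn(R_\star\setminus\Z_{n^{\alpha'}},\tilde\tau(z))]$, and bound the first term by $c\alpha_d h^d(n)/C$ via $\#\tilde\tau(z)\le c\,h(n)^{d-1}$. That part matches the paper exactly.

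The two arguments diverge on the tail. You invoke an exponential Poisson-kernel estimate for the slab, $\mathbb{P}_y(S(H_{r'})=x)\le C(r')^{-(d-1)}\exp(-c\|y_\perp-x_\perp\|/r')$, which you rightly note is not available in the paper (Lemma~\ref{lemma 6.3} gives only polynomial decay) and would have to be imported or proved separately. The paper instead reuses its own box/donut machinery from Section~\ref{section: donut method}: it tiles the region between levels $n^\alpha$ and $n^{\alpha'}$ by hypercubes of side $n+2h(n)$, observes that any walk counted by the tail must traverse at least $\tfrac{n^{\alpha'}-n^\alpha}{n+2h(n)}\ge n$ of these boxes while staying in $R_{n/2+h(n)}$ (requiring $\alpha'>\max(\alpha,2)$), and applies the ``one chance in $4d^2$ to exit through a good side per box'' bound from Proposition~\ref{prop: crossing prob} to get $(1-c)^k$ per starting point. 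Summing over boxes gives $\sum_{k\ge n}(n+2h(n))^d(1-c)^{k-1}\to 0$.

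Both routes are correct and express the same phenomenon (geometric decay of the slab harmonic measure in the transverse direction). Your approach is cleaner analytically but not self-contained within the paper; the paper's version is more elementary, avoids any external input, and exploits tools it already built.
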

\begin{proof}[Proof of Lemma \ref{lemma: upper bound mu}:]
	Here, we must once again consider two cases. When $n \leq \frac{\alpha_d l}{C}$, then we write 
	\begin{align*}
		\mu_n &= \mathbb{E}\left[\Mnl\left(\init,\tilde{\tau}(z(l))\right)\right]-\mathbb{E}\left[\Mnl\left(\mathcal{R}(l, n,\alpha_d,C)\cap \mathbb{Z}_{n^{\alpha'}},\tilde{\tau}(z(l))\right)\right]\\
		& \leq \mathbb{E}\left[\Mnl\left(\init,\tilde{\tau}(z(l))\right)\right] \\
		& =n\frac{\#\tilde{\tau}(z(l))}{2}\\
		& \leq cl^{d-1}n \\
		& \leq cl^{d-1}\cdot \frac{\alpha_d l}{C}\\
		& = \frac{c\alpha_d l^d}{C}
	\end{align*}
	In this case, $C_0 = 0$.
	
	Now, consider the other case where $n > \frac{\alpha_d l}{C}$. 
	We write
	\begin{align*}
		\mu_n&=\mathbb{E}\left[\Mnl\left(\init,\tilde{\tau}(z(l))\right)\right]-\mathbb{E}\left[\Mnl\left(\mathcal{R}(l, n,\alpha_d,C),\tilde{\tau}(z(l))\right)\right]\\
		&+\mathbb{E}\left[\Mnl\left(\mathcal{R}(l, n,\alpha_d,C),\tilde{\tau}(z(l))\right)\right]-\mathbb{E}\left[\Mnl\left(\mathcal{R}(l, n,\alpha_d,C)\cap \mathbb{Z}_{n^{\alpha'}},\tilde{\tau}(z(l))\right)\right].
	\end{align*}
	The first line has already been computed above, and can be bounded from above by \commentK{$\frac{c\alpha_d}{C}l^d$}, for some $c>0$. We now shift our focus on the second line. Notice that this quantity is equal to:
	\begin{equation}
		\label{eqn: avg numb walks}
		\mathbb{E}\left[\Mnl\left(\mathcal{R}(l, n,\alpha_d,C)\cap \mathbb{Z}_{n^{\alpha'}}^c,\tilde{\tau}(z(l))\right)\right],
	\end{equation}
	and it remains to show that this is bounded uniformly on $n$.
	\newline
	Now, recall that $z(l)\in\mathbb{Z}_{n^{\alpha}}$. Walks counted by $\Mnl\left(\mathcal{R}(l, n,\alpha_d,C)\cap \mathbb{Z}_{n^{\alpha'}}^c,\tilde{\tau}(z(l))\right)$ necessarily stay within $\mathcal{R}_{n/2+l}$ between levels $n^{\alpha'}$ and $n^{\alpha}$ before exiting through $\tilde{\tau}(z(l))$. We can therefore use a donut argument (just like in the proof of Theorem \ref{thm: strong stab}) between levels $n^{\alpha}$ and levels $n^{\alpha'}$ and beyond, by building hypercubes of length $n+2l$ between these levels. The length of our hypercubes is imposed to us by the width of $\mathcal{R}_{n/2+l}$, which equals $n+2l$. Since we want the walk to stay inside the slab, we choose our boxes to have the same width.
	We illustrate this argument with Figure \ref{fig: boxmethod}.
	\begin{figure}[h]
		\centering
		\begin{tikzpicture}[scale=0.6]
			\fill [color=gray!40](-1,2) rectangle (1,14);
			\draw [->,very thick] (0,-1) -- (0,14.3);
			\draw [->,very thick] (-2,0) -- (2,0);
			\draw [dashed, blue] (-2,2) -- (2,2);
			\draw (-2,2) node[left, blue] {$n^{\alpha}$};
			\draw [dashed, red] (-2,13) -- (1.25,13);
			\draw (-2,13) node[left, red] {$n^{\alpha'}$};
			\draw (-1,0) node {|};
			\draw (1,0) node {|};
			\draw (1.5,-1.25) node[below] {$\partial \mathcal{R}_{n/2+l}$};
			\draw [dashed](-1,-1)--(-1,14.5);
			\draw [dashed](1,-1)--(1,14.5);
			\draw [black,thick](-1,2) rectangle (1,4);
			\draw [black,thick](-1,4) rectangle (1,6);
			\draw [black,thick](-1,6) rectangle (1,8);
			\draw [black,thick](-1,8) rectangle (1,10);
			\draw [black,thick](-1,10) rectangle (1,12);
			\draw [black,thick](-1,12) rectangle (1,14);
			\draw [<->,thick] (-1,14.5) -- (1,14.5);
			\draw (0,14.5) node[above] {$n+2l$};
			\draw [<->,thick] (1.5,14) -- (1.5,12);
			\draw (1.5,13) node[right] {$n+2l$};
			\draw [red] (-0.3,13.4)to[curve through={(0.4,11).. (0.1,6) (2,1)}](2,1);
			\filldraw [red] (-0.3,13.4) circle (1pt);
			\draw (-0.3,13.4) node[left,red] {$x$};
		\end{tikzpicture}
		\caption{Illustration of the box method}
		\label{fig: boxmethod}
	\end{figure}
	Now, for a walk started on some site of $\mathcal{R}(l, n,\alpha_d,C)\cap \mathbb{Z}_{n^{\alpha'}}^c$ to exit $\mathcal{R}_{n/2+l}$ through $\tilde{\tau}(z(l))$, it necessarily has to cross a certain amount of boxes. Just like in Proposition \ref{prop: crossing prob}, we use the same reasoning to say that:
	\[
	\forall k\in \N,\ \mathbb{P}\left(\text{the walk goes through at least $k$ boxes}\right)\leq (1-c)^{k},
	\]
	with $c=\frac{1}{4d^2}$. We need to determine the minimum amount of cubes of length $n+2l$ that can fit between $n^{\alpha}$ and $n^{\alpha'}$. This is equal to	$\frac{n^{\alpha'}-n^{\alpha}}{n+2l}$, which is greater than $n$ given $n$ is large enough and $\alpha'>\max(\alpha,2)$. This uses the fact that $n + 2l $ is of order $n$. Let us number these boxes by $B_0,\ B_1, \dots$ starting from level $n^{\alpha}$. We have:
	\[
	\mathcal{R}(l, n,\alpha_d,C)\cap\mathbb{Z}_{n^{\alpha'}}^c\subset \bigcup_{k\geq n} B_k.
	\]
	Hence:
	\begin{multline*}
		\mathbb{E}\left[\Mnl\left(\mathcal{R}(l, n,\alpha_d,C)\cap \mathbb{Z}_{n^{\alpha'}}^c,\tilde{\tau}(z(l))\right)\right]\\
		\begin{split} & =\mathbb{E}\left[\Mnl\left(\bigcup_{k\geq n} B_k , \tilde{\tau}(z(l)) \right)\right] \\
			&\leq \sum_{k\geq n}\sum_{x\in B_k} \mathbb{P}_x\left(S(H_{n/2+l})\in\tilde{\tau}(z(l))\right) \\
			&\leq \sum_{k\geq n}\sum_{x\in B_k}\mathbb{P}_x\left(\text{the walk goes through at least $k-1$ boxes}\right)\\
			&\leq \sum_{k\geq n} (n+2l)^d (1-c)^{k-1}\\
			&\leq Kn^d(1-c)^{n-1},
		\end{split}	
	\end{multline*}
	which tends to 0 when $n$ tends to infinity, and is consequently uniformly bounded on $n$.
	
	In this case, we have that 
	\[
	\mu_n\leq \frac{c\alpha_d \commentK{l^d}}{C}+C_0,
	\]
	for some constant $C_0$ that does not depend on $n$.
\end{proof}
\paragraph{Optimization in \eqref{line: optimizationbis} :}
We end by detailing the optimization in \eqref{line: optimizationbis}. This computation follows the same spirit as the optimization in the lower bound (see \eqref{eqn: optimization}). We will be using the previous bound on $\mu_n$ given by Lemma \ref{lemma: upper bound mu} to conclude.

In all that follows, recall that $\alpha_d$ is fixed. \commentK{Let $\kappa$ denote a generic positive constant}. After applying Lemma \ref{lemma AG Cheb}, we get that for all $\lambda>0$:
\begin{multline*}
	\mathbb{P}\left( M^*_{n/2+l}\left(\init, \mathbb{B}(z(l),\tilde{\tau}(z(l)))\right)>\frac{\beta l^d\rho}{2},\ \delta_I(n)\leq \frac{\alpha_d l}{2C}\right) \\
	\leq \exp\left(-\lambda\left(\frac{\beta l^d\rho}{2}-\mu_n\right) + \lambda^2\left(\mu_n + 4\sum_{y\in \mathcal{R}(l, n,\alpha_d,C)\cap \mathbb{Z}_{n^{\alpha'}}} \mathbb{P}_y\left(S(H_{n/2+l})\in\tilde{\tau}(z(l))\right)^2 \right)\right).
\end{multline*}	
Once again, minimizing in $\lambda>0$ yields:
\begin{multline*}
	\mathbb{P}\left( M^*_{n/2+l}\left(\init, \mathbb{B}(z(l),\tilde{\tau}(z(l)))\right)>\frac{\beta l^d\rho}{2},\ \delta_I(n)\leq \frac{\alpha_d l}{2C}\right)\\ 
	\leq \exp\left(-\frac{\left(\mu_n-\frac{\beta l^d\rho}{2}\right)^2}{4\left(\mu_n+4\displaystyle{\sum_{y\in \mathcal{R}(l, n,\alpha_d,C)\cap \mathbb{Z}_{n^{\alpha'}}}}\mathbb{P}_y\left(S(H_{n/2+l})\in\tilde{\tau}(z(l))\right)^2 \right)}\right).
\end{multline*}	
Using the bound of Lemma \ref{lemma: upper bound mu}, we can take $C$ sufficiently large in order for $\left(\mu_n-\frac{\beta l^d\rho}{2}\right)^2$ to be of order $l^{2d}$.
Therefore, we have for $C$ large enough, $(\frac{\beta l^d\rho}{2}-\mu_n)^2\geq \commentK{\kappa} l^{2d}$.
Now, recall that 
\[
\sum_{y\in \mathcal{R}(l, n,\alpha_d,C)\cap \mathbb{Z}_{n^{\alpha'}} }\mathbb{P}_y\left(S(H_{n/2+h(n)})\in\tilde{\tau}(z)\right)^2\leq \commentK{\kappa}l^{2d-2}.
\]
Using the bound of Lemma \ref{lemma: upper bound mu} and the fact that $d>2$, we have:
\[
4\left(\mu_n+4\sum_{y\in \mathcal{R}(l, n,\alpha_d,C)\cap \mathbb{Z}_{n^{\alpha'}}} \mathbb{P}_y\left(S(H_{n/2+l})\in\tilde{\tau}(z(l))\right)^2 \right)\leq \commentK{\kappa}l^{2d-2}.
\]
Combining both results gives:
\begin{align*}
	\mathbb{P}\left( M^*_{n/2+l}\left(\init, \mathbb{B}(z(l),\tilde{\tau}(z(l)))\right)>\frac{\beta l^d \rho}{2},\ \delta_I(n)\leq \frac{\alpha_d l}{2C}\right)&\leq  \exp\left(- \frac{\commentK{\kappa}l^{2d}}{\commentK{\kappa}l^{2d-2}}\right)\\
	&=\exp\left(-\commentK{\kappa} l^2\right).
\end{align*}

\bibliographystyle{acm}
\bibliography{article1_bib.bib}

\end{document}